\title[]{Global Newlander-Nirenberg theorem for domains with $C^2$ boundary}
\date{\today}
\author[C. Gan]{Chun Gan}
\author[X. Gong]{Xianghong Gong$^\dag$}
 \address{Department of Mathematics,
 University of Wisconsin-Madison, Madison, WI 53706, U.S.A.}
 \email{chun@math.wisc.edu}
  \email{gong@math.wisc.edu}
\thanks{${\dag}$ Partially supported by a grant from the Simons
Foundation (award number:~505027)}
 \keywords{Newlander-Nirenberg theorem, Almost Complex Structures, Strictly pseudoconvex domains with $C^2$ boundary, Homotopy Formula, Nash-Moser Methods}
 \subjclass[2010]{32T15, 32Q40, 32A26}
\newcommand{\dist}{\operatorname{dist}}
\newtheorem{thm}{Theorem}[section]
\newtheorem{cor}[thm]{Corollary}
\newtheorem{prop}[thm]{Proposition}
\newtheorem{lemma}[thm]{Lemma}
\theoremstyle{definition}
\renewcommand{\th}[1]{\begin{thm}\label{#1}}
\renewcommand{\eth}{\end{thm}}
\newcommand{\co}[1]{\begin{cor}\label{#1}}
\newcommand{\eco}{\end{cor}}
\renewcommand{\le}[1]{\begin{lemma}\label{#1}}
\newcommand{\ele}{\end{lemma}}
\newcommand{\pr}[1]{\begin{prop}\label{#1}}
\newcommand{\epr}{\end{prop}}
\newcommand{\ga}{\begin{gather}}
\newcommand{\ega}{\end{gather}}
\newcommand{\gan}{\begin{gather*}}
\newcommand{\egan}{\end{gather*}}
\newcommand{\al}{\begin{align}}
\newcommand{\eal}{\end{align}}
\newcommand{\aln}{\begin{align*}}
\newcommand{\ealn}{\end{align*}}
\newcommand{\eq}[1]{\begin{equation}\label{#1}}
\newcommand{\eeq}{\end{equation}}
\newcommand{\f}[2]{\frac{#1}{#2}}
\newcommand{\ci}{~\cite}
\newcommand{\ov}{\overline}
\newcommand{\rr}{\mathbb R}
\newcommand{\Del}{\Delta}
\newcommand{\pd}{\partial}
\newcommand{\RE}{\operatorname{Re}}
\newcommand{\IM}{\operatorname{Im}}
\renewcommand{\dbar}{\overline\partial}
\newcommand{\rea}[1]{$(\ref{#1})$}
\newcommand{\rl}[1]{Lemma~\ref{#1}}
\newcommand{\rp}[1]{Proposition~\ref{#1}}
\newcommand{\rt}[1]{Theorem~\ref{#1}}
\newcommand{\rta}[1]{Theorem~$\ref{#1}$}
\newcommand{\db}{\dbar}
\newcounter{pp}
\newcommand{\bpp}{\begin{list}{$\hspace{-1em}\alph{pp})$}{\usecounter{pp}}}
\newcommand{\epp}{\end{list}}
\newcounter{ppp}
\newcommand{\bppp}{\begin{list}{$\hspace{-1em}(\roman{ppp})$}{\usecounter{ppp}}}
\newcommand{\eppp}{\end{list}}
\def\beq{\begin{equation}}
\def\eeq{\end{equation}}
\def\a{\alpha}
\def\b{\beta}
\def\l{\lambda}
\def\de{\delta}
\def\var{\varphi}
\def\s{\sigma}
\def\Z{\mathbb{Z}}
\def\e{\epsilon}
\def\th{\theta}
\def\g{\gamma}
\def\R{\mathbb{R}}
\def\C{\mathbb{C}}
\def\U{\mathcal{U}}
\def\Om{\Omega}
\def\N{\mathbb{N}}
\def\k{\kappa}
\def\o\{\O\}
\def\H{\mathcal{H}}
\def\BH{\mathbb{H}}
\def\L{\mathcal{L}}
\def\lap{\triangle}
\def\V{\mathcal{V}}
\def\om{\omega}
\def\ll{\left|}
\def\d{\partial}
\def\we{\wedge}
\def\lapla{\Delta}
\def\bo{\square}
\def\Div{\text{div}}
\def\grad{\triangledown}
\newcommand*{\defeq}{\mathrel{\vcenter{\baselineskip 0.5ex \lineskiplimit0pt
                     \hbox{\scriptsize.}\hbox{\scriptsize.}}}%
                     =}
\newcommand{\norm}[1]{\lVert#1\rVert}
\newcommand{\znorm}[1]{\lvert#1\rvert}
\newcommand{\pair}[1]{\left<#1\right>}
\begin{document}
\begin{abstract}
The Newlander-Nirenberg theorem says that a formally integrable complex structure is locally equivalent to the standard complex structure in the complex Euclidean space. In this paper, we consider two natural generalizations of the Newlander-Nirenberg theorem under the presence of a $C^2$ strictly pseudoconvex boundary. When a given formally integrable complex structure $X$ is defined on the closure of a bounded strictly pseudoconvex domain with $C^2$ boundary  $D \subset \mathbb C^n$,  we show the existence of global holomorphic coordinate systems defined on $\overline{D}$ that 
transform $X$ into the standard complex structure provided that $X$ is sufficiently close to the standard complex structure. Moreover, we show that  such closeness is stable under a small $C^2$ perturbation of $\partial D$. As a consequence, when a given formally integrable complex structure is  defined on a one-sided neighborhood of some point in a $C^2$ real hypersurface $M \subset \mathbb C^n$, we prove the existence of local one-sided holomorphic coordinate systems provided that $M$ is strictly pseudoconvex with respect to the given complex structure.  We also obtain results when the structures are finite smooth.
\end{abstract}

\maketitle

\section{Introduction}
\setcounter{thm}{0}\setcounter{equation}{0}

\
 
Given  a formally integrable 
smooth (i.e.\ $C^\infty$)  almost complex structure defined on $\ov D$, where $D$ is a bounded domain
in $\R^{2n}$, we consider the problem of finding smooth global holomorphic coordinate systems on $\ov D$ for the structure. By a smooth global holomorphic coordinate system on $\ov D$, we mean a smooth diffeomorphism sending $\ov D$ onto $\ov{ D'}$ where $D'$ is a domain in $\C^n$, while the diffeomorphism transforms the given complex structure into the standard complex structure on $\C^n$.
The classical Newlander-Nirenberg theorem asserts the existence of local holomorphic coordinate systems for a formally integrable almost complex structure defined near 
an interior point of $D$.
The main result of this paper is to show the existence of such global holomorphic coordinate systems 
when the structure is  a small perturbation of the standard complex structure on $\ov D$, 
where $D$ is a bounded, strictly pseudoconvex domain with $C^2$ boundary.
When both boundary and the  complex structure are $C^\infty$, this result is due to R.~Hamilton through a general program~\cites{MR0477158, MR594711, MR656198}.

We will use our global result 
 to show the existence of local holomorphic coordinate systems on a small,
one-sided neighborhood of a given point in a real hypersurface $M$: If the formally integrable almost complex structure is defined on $U\cup M$, where $U$ is a domain in $\C^n$ and $M$  is a piece of 
$C^2$ boundary of $U$ that is strictly pseudoconvex with respect to the given complex structure on $U\cup M$, then for any
boundary point $p\in M$ there is a smooth diffeomorphism, defined 
on
a neighborhood of $p$
 in $U\cup M$, that transforms the complex strucutre into the standard one.
 When both the real hypersurface and the almost complex structure are $C^\infty$, this result is due to Catlin  \cite{MR959270} and Hanges-Jacobowitz~\cite{MR980299}.

Therefore, by restricting to $\partial D\in C^2$, we establish 
 results
for the complex structures on strictly pseudoconvex domains with the {\it minimum} smoothness required to define the strict Levi-pseudoconvexity.
For simplicity, we shall  refer to the existence of global (resp.~local) holomorphic coordinate systems as a global (resp.~local) Newlander-Nirenberg theorem with ($C^2$)  boundary.

To state our results more precisely, we first recall some definitions.
Let $p\in \R^{2n}$ with $n\geq
2$. Let $X_{1},\cdots, X_{n}$ be vector fields  defined near $p$ and having  $C^1$ complex coefficients. 
We say that $\{X_{\a}\}_{\a=1}^n$ defines an almost complex structure near $p$ if
\eq{}
X_1,\cdots, X_{n}, \ov {X}_{ 1},\cdots, \ov{X}_{ n}
\,\,\text{are $\C$--linearly independent at $p$.}
\eeq
Here $\ov{ X_{\a}}$ denotes the complex conjugate of $X_{\a}$.  Let $[X_{\a},X_{\b}]=X_{\a} X_{\b} -X_{\b} X_{\a}$ be the Lie bracket of $X_{\a},X_{\b}$.  The almost complex structure is said to be
{\it formally integrable} if in addition there exist functions $a_{\a\b}^\gamma$  such that
\eq{integrability}
[X_{\a},X_{\b}]=a_{\a\b}^{\gamma} X_{ \gamma}
\eeq
near $p$ for $\a,\b=1,\cdots, n$. Here we have used Einstein convention to sum over the repeated index $\g$ and we shall adapt this convention throughout the paper.

Recall that a domain $D\subset \C^n$ with $C^2$ boundary is said to be  strictly pseudoconvex with respect to the standard complex structure at $p\in \d D$ if there exists some open neighborhood $U$ of $p$ and a $C^2$ real-valued function $\rho :  U\to \R$ such that the following hold:  $D\cap U =\{z\in U  : \rho< 0\}$, $\rho(p)=0$, $d \rho (p)\neq 0$ and $\sqrt{-1}\sum\frac{\d^2 \rho }{\d z_\a\d \overline z_\b}(p) t_\a t_{\bar \b}>0$ for all vectors $t \in \C^n\backslash\{0\}$ satisfying $\sum t_\a \frac{\d \rho}{\d z_\a}(p)=0$.  Finally, for $0< a <\infty$ let $\znorm{\ \cdot\ }_{D, a}$ be the H\"older-Zygmund norm of $\Lambda^a(\ov{D})$ for a domain $D\subset \R^n$. 
Note that $\Lambda^a$ is the H\"older space $C^a$ in equivalent norms when $a$ is non integer; see section \ref{sec Settings} for the definition of Zygmund spaces.

\medskip

Our main result is the following.
 
\begin{thm} \label{Global NN}
Let $5 < r \leq\infty$. Let $D$ be a domain in $\C^n$ with $C^2$ boundary that is strictly pseudoconvex with respect to  the standard complex structure on $\C^n$.  Let $X_{\overline\a}= \d_{\overline\a}+A_{\overline\a}^\b \d_\b$, $\a=1,\cdots, n$ be   $\Lambda^{r}(\overline{D})$ vector fields defining  a formally integrable almost complex structure on $\ov D$. 
There exist positive constants $\delta_r(D)$  and $\delta_5(D)$ such that if  
\ga\label{A_r}
\znorm{A}_{ D, r}<\de_r(D), \quad r < \infty, \\
\znorm{A}_{D, 5} < \de_5(D), \quad r = \infty,
\end{gather}
 then there exists an embedding $F$ of $\ov D$ into $ \C^n$ such that $dF(X_{\bar1}), \dots, dF(X_{\bar n})$ are in the span of $\f{\d}{\d \ov z_1},\dots, \f{\d}{\d \ov z_n}$
 while $F\in \Lambda^{r-1}(\overline{D})$ if $r<\infty$ and  $F\in C^\infty(\ov{D})$ if $r = \infty$. Moreover, the constants $\de_{r} (D), \de_5(D)$ depend only on the $C^2$ norm of a given defining function of domain $D$ and is lower stable under a small $C^2$ perturbation of $\d D$.
\end{thm}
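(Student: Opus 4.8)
The plan is to solve the equation $\overline\partial_X F = 0$ by a Nash–Moser iteration in which each step consists of an application of a homotopy operator for $\overline\partial$ on strictly pseudoconvex domains with $C^2$ boundary, followed by a smoothing operator to compensate for the loss of derivative. First I would reformulate the problem: writing $F = (F^1,\dots,F^n)$, the condition that $dF(X_{\overline\alpha})$ lie in the span of $\partial/\partial\overline z_\beta$ is equivalent to $X_{\overline\alpha}F^j = 0$ for all $\alpha,j$, i.e. $\overline\partial F^j + A_{\overline\alpha}^\beta \partial_\beta F^j \, d\overline z^\alpha = 0$. Seeking $F = \mathrm{id} + f$ with $f$ small, this becomes $\overline\partial f = -A \cdot (I + \partial f) =: \Phi(f)$, a nonlinear equation whose right-hand side is $C^1$-small when $A$ and $f$ are small. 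The key structural fact, which one must establish from formal integrability \eqref{integrability}, is that the right-hand side $\Phi(f)$ is $\overline\partial$-closed in the appropriate sense whenever $f$ solves the previous linearized equation sufficiently well; this closedness is exactly what makes the homotopy formula applicable at each iteration step and is where the integrability hypothesis is consumed.

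The core analytic input is a homotopy formula on $D$: an operator $H = H_D$ with $u = \overline\partial H u + H \overline\partial u$ for $(0,1)$-forms $u$, together with the gain estimate $\znorm{Hu}_{D,a+1/2}\lesssim \znorm{u}_{D,a}$ (or a variant with a fixed finite loss), with constants depending only on the $C^2$ norm of a defining function of $D$ and stable under small $C^2$ perturbations of $\partial D$ — this last stability being the whole point of tracking the dependence on $\partial D$ and presumably the content of an earlier section. Granting such an $H$, the iteration is the standard one: set $f_0 = 0$, and given $f_m$ with $\overline\partial f_m$ close to $\Phi(f_m)$, set $f_{m+1} = f_m + S_{\theta_m} H(\Phi(f_m) - \overline\partial f_m)$, where $S_{\theta}$ is a smoothing operator with $\theta_m = \theta_0^{(3/2)^m}$. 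One then runs the usual Nash–Moser bookkeeping: a quadratic-in-the-error estimate for the new residual $\overline\partial f_{m+1} - \Phi(f_{m+1})$ in a low norm, tame linear growth of high norms, interpolation inequalities for Hölder–Zygmund spaces, and the choice of $\delta_r(D)$ small enough (depending only through $H$ on the $C^2$ data of $D$) that the scheme converges in $\Lambda^{r-1}(\overline D)$, respectively in every $C^k$ when $r=\infty$ starting from the $\Lambda^5$ smallness. The threshold $r > 5$ and the loss of one derivative in the conclusion come from the half-derivative gain of $H$ combined with the one-derivative loss in $\Phi$ (the $\partial f$ term) and the margin needed to close the Nash–Moser estimates.

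The main obstacle, and the part requiring genuine care rather than routine estimation, is the homotopy operator itself on a merely $C^2$ strictly pseudoconvex domain: classical Henkin–Ramírez type kernels are built from Levi polynomials and require $C^3$ or better regularity of $\partial D$ to even be defined and to satisfy Hölder estimates with the stated gain. On a $C^2$ domain one must instead construct (or invoke from an earlier section) a homotopy operator via a more robust method — for instance, using a $C^2$-stable family of supporting functions or a smoothing of the boundary coupled with a correction term — and prove the $1/2$-gain estimate with constants controlled purely by $\Vert\rho\Vert_{C^2}$; this is precisely the technical heart that forces the whole project. Secondarily, one must handle the transition from the "solve $\overline\partial f = \Phi(f)$" problem, whose solution is only a diffeomorphism a priori, to an honest embedding $F$ of $\overline D$: since $f$ is $C^1$-small, $F = \mathrm{id}+f$ is automatically an embedding of the compact manifold-with-boundary $\overline D$, and the integrability condition guarantees that the pushed-forward structure is the standard one, so $F(\overline D) = \overline{D'}$ for the domain $D' = F(D)$, completing the proof.
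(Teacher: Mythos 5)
Your framework is aligned with the paper's: homotopy formula with $\frac{1}{2}$-gain on $C^2$ strictly pseudoconvex domains, Nash--Moser smoothing, H\"older--Zygmund interpolation, and $C^2$-stability of all constants. But the iteration scheme you propose is genuinely different from the paper's, and more importantly there is a gap in your treatment of the smoothing step that is the actual technical heart of the result.

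\textbf{Different scheme.} You iterate on $f$ over a \emph{fixed} domain $D$, aiming to solve $\dbar f = \Phi(f) := -A(I+\d f)$ by a Newton update $f_{m+1} = f_m + S_{\theta_m} H(\Phi(f_m)-\dbar f_m)$. The paper instead composes nearly-identity diffeomorphisms: at each stage it sets $f_i = -S_{t_i} E_{D_i} P_{D_i} A_i$, applies $F_i = I+f_i$ to push the structure forward, and obtains a \emph{new} structure $A_{i+1}$ on a \emph{new} domain $D_{i+1} = F_i(D_i)$ with $\hat A\circ F = (I+\ov{\d f}+A\dbar f)^{-1}(A+\dbar f+A\d f)$ from Lemma~\ref{elem_pp}. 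The cost of the paper's route is that one must control the Levi form and defining functions of the moving domains (Section~\ref{sec iteration}); what it buys is that the formal integrability is automatically preserved by pushforward, so the identity $\dbar A_i = [A_i,\d A_i]$ can be invoked cleanly at every step. Your route keeps the domain fixed (avoiding the domain-control machinery) but then the statement that ``$\Phi(f)$ is $\dbar$-closed in the appropriate sense'' is asserted, not established; the integrability hypothesis is an identity for $A$, and translating it into quadratic smallness of $\dbar(\Phi(f_m)-\dbar f_m)$ must be done by hand and is precisely where the bookkeeping gets delicate. This is not fatal in principle, but it is left as a black box in your proposal.

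\textbf{The real gap: smoothing near $\d D$.} You apply $S_\theta$ directly to data on $\ov D$. A Moser mollifier $S_\theta u(x)=\int u(y)\chi_\theta(x-y)\,dy$ only produces smooth output on $D$ if $u$ is already defined on a $\theta$-neighborhood of $\ov D$; hence one must first extend, e.g. by the Stein operator $E$, and then smooth. The paper takes $f = -S_t E P A$, and when computing $\dbar f$ the crucial commutator $[\dbar,E]$ appears (Proposition~\ref{commutator}, and the $I_3 = -S_t[\dbar,E]PA$ term in \eqref{five_term1}--\eqref{five_term2}). This commutator is not controlled by the $\frac{1}{2}$-gain of $P$ alone: its estimate \eqref{I3_r} carries an extra $t^{-1/2}$, which is why the high norm grows geometrically rather than linearly and why the final map $F$ is only $\Lambda^{r-1}$ rather than $\Lambda^{r+1/2}$. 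The introduction says explicitly that this commutator is ``the main source for losing one derivative.'' Your derivative accounting attributes the loss of one derivative to the $\d f$ factor in $\Phi$, but that term is quadratic (it is $A\d f$) and contributes to the quadratic decay, not to derivative loss. Without the $[\dbar,E]$ analysis the threshold $r>5$ and the $\Lambda^{r-1}$ conclusion cannot be obtained, and this is the genuine missing idea.

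A secondary point: the paper devotes substantial effort (Remark~\ref{stremark}, Lemmas~\ref{iter_cont}, \ref{levi-forms}, and the proof of Proposition~\ref{ConvMapping}(iv)) to showing that $\de_r(D)$ is \emph{lower stable} under $C^2$ perturbation of $\d D$; this stability is not a byproduct of the iteration but must be engineered, since it is what makes the dilation argument in Section~\ref{Local_NN} work. Your proposal mentions stability of the homotopy operator's constants but does not track how $\de_r(D)$ is assembled from them (the explicit formula \eqref{de_r_final}), which is where the ``moreover'' in the theorem is actually proved.
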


In fact,  the condition \eqref{A_r} can be  relaxed. See Section \ref{convergence} for more detail. Notice that when the structure is smooth, we only need to control $\znorm{A}_{D,5}$ in order to achieve a smooth embedding.

\medskip

The {\emph {lower stability}} of $\de_r(D)$  in the theorem means that  for any domain $\widetilde D$ of which a defining function is sufficiently close to
a given defining function of $D$ in $C^2$ norm, we 
have
$$
\de_r(D) \leq 
C_r(D)\de_r(\widetilde D)
$$ for some constant $C_r(D)>0$ 
possibly dependent of $D$ 
but  
 independent of $\widetilde D$, while  the theorem 
 remains true for $\widetilde D$. This is an important ingredient in our proof of the local Newlander-Nirenberg theorem with $C^2$ boundary which we now describe.

Let $U$ be an open subset of $\C^{n}$ and let $M\subset \partial U$ be a real $C^2$ hypersurface in $\C^n$.
Let $X_{\bar 1},\dots, X_{\bar n}$ be $C^1$ vector fields on $U\cup M$ that define a formally integrable almost complex structure on $U\cup M$.
Let $T^{0,1}(U\cup M,X)$ be the  span of $\{ X_{\bar 1},\dots,  X_{\bar n}\}$ and $\Lambda^{0,1}(U\cup M,X)$ be its dual bundle.  An integrable almost complex structure $\{X_{\overline\a}\}_{\a=1}^n$ induces a natural decomposition of the exterior derivative  $d= \d_X+\dbar_X$. Here $\dbar _X  :  \Lambda_X^{p,q}\to \Lambda_X^{p,q+1}$,  $\dbar_X^2=0$, $\d_X$ is its conjugate,  and $\Lambda_X^{p,q}$ is the exterior algebra of smooth differential forms on $U\cup M$ of type $(p,q)$ w.r.t $\{X_{\overline\a}\}_{\a=1}^n$.

We say $M$ is strictly pseudoconvex w.r.t. $(U\cup M, \{X_{\overline\a}\}_{\a=1}^n)$,  if  for each $p\in M$, there exists a $C^2$ function $\rho$, defined in a neighorhood $\om$ of $p$ such that $\omega\cap U=\{z\in \omega:\rho(z)<0\}$, $\rho=0$ on $M\cap \om$, $d\rho(p)\ne 0 $ and
\eq{spc_X}
\sqrt{-1}\d_X\dbar_X \rho(p)(v, \overline v)>0, \quad \text{ $\forall v\in T_p^{1,0}(U\cup M,X)\cap(T_pM\otimes\C)$}, \quad v\neq0.
\eeq

We 
now can state the following local Newlander-Nirenberg theorem with boundary.

\begin{thm}
\label{Local NN} Let $5<r\leq\infty$.
Let $U$ be a domain in $\C^n$ whose boundary contains a piece of $C^2$  real hypersurface $M$ and let $X_{\bar 1},\dots, X_{\bar n}$ be $\Lambda^r( U \cup M )$ vector fields
defining a formally  integrable almost complex structure on $U \cup M$.
Assume that $M$ is strictly pseudoconvex with respect to $(U\cup M,\{X_{\overline\a}\}_{\a=1}^n)$.  Then for each $p\in M$, there exists a diffeomorphism $F$ defined on a neighborhood $\omega$ of $p$ in $U\cup M$ such that $dF(X_{\bar1}), \dots, dF(X_{\bar n})$ are in the span of $\f{\d}{\d \ov z_1},\dots, \f{\d}{\d \ov z_n}$  while $F\in \Lambda^{r-1}(\overline{\om})\cap \Lambda^{r+1}(\omega\cap U)$ and $F\in C^\infty(\ov\om)$ if $r=\infty$.
\end{thm}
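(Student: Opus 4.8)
The plan is to deduce Theorem~\ref{Local NN} from the global result Theorem~\ref{Global NN} by a localization argument, the main point being to produce a genuine strictly pseudoconvex domain (with respect to the \emph{standard} structure) after flattening the given structure to first order. First I would fix $p\in M$ and choose, near $p$, a $C^1$ change of coordinates together with a $\C$-linear change of the frame $\{X_{\bar\a}\}$ so that at the point $p$ the structure agrees with the standard one, i.e.\ $A_{\bar\a}^\b(p)=0$; since the coefficients $A$ lie in $\Lambda^r$, this can be arranged while keeping the new coefficients in $\Lambda^r$ and, crucially, one can moreover arrange that $A$ vanishes to first order at $p$ (subtract a linear correction, using that the structure is formally integrable so the relevant $\dbar$-closedness holds at $p$). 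Under such a normalization, in a sufficiently small ball $B_\e(p)$ one has $\znorm{A}_{B_\e(p)\cap U, r}\leq C\e$, so the smallness hypothesis \eqref{A_r} can be met by shrinking $\e$.

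Next I would address the geometry. The defining function $\rho$ for $U$ near $p$ need not be strictly plurisubharmonic \emph{in the standard sense}; it is only strictly pseudoconvex with respect to $(U\cup M,\{X_{\bar\a}\})$ in the sense of \eqref{spc_X}. However, because the structure $X$ is $C^1$-close to the standard one on $B_\e(p)$ after the normalization above, the two Levi forms are close, so $\rho$ (possibly after adding a term like $C\znorm{z-p}^2$, which is harmless for $C^2$-small $\e$-perturbations) becomes strictly plurisubharmonic for the standard structure on a neighborhood of $p$; hence $\{\rho<0\}$ is, near $p$, a strictly pseudoconvex domain with $C^2$ boundary in the ordinary sense. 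I would then take a small, smoothly bounded, strictly pseudoconvex domain $D\subset U$ with $C^2$ boundary such that $\partial D$ contains a neighborhood of $p$ in $M$ (glue the piece of $M$ near $p$ to an interior cap; this can be done keeping $\partial D\in C^2$ and strictly pseudoconvex, possibly after a further mild perturbation that only improves the relevant constants). Here is where the \emph{lower stability} of $\de_r(D)$ from Theorem~\ref{Global NN} is essential: the constructed $D$ is only $C^2$, obtained by a $C^2$ gluing/perturbation, and we need $\de_r(D)$ to not collapse — lower stability guarantees $\de_r(D)\geq c_r(D)^{-1}\de_r(\widetilde D)>0$ for the comparison domain $\widetilde D$ we can control explicitly.

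With $D$ in hand and $\e$ chosen small enough that $\znorm{A}_{D,r}<\de_r(D)$ (for $r<\infty$; for $r=\infty$ replace by the $\Lambda^5$ bound), Theorem~\ref{Global NN} produces an embedding $F$ of $\ov D$ into $\C^n$ with $F\in\Lambda^{r-1}(\ov D)$ (resp.\ $C^\infty$ if $r=\infty$) transforming $\{X_{\bar\a}\}$ into the span of $\{\partial/\partial\ov z_j\}$. Restricting $F$ to a neighborhood $\omega$ of $p$ in $U\cup M$ gives the desired diffeomorphism, and composing with the initial $C^1$ normalization (which one must check preserves the $\Lambda^{r-1}$ regularity; since the normalization can be taken smooth away from requiring $A(p)=0$ and first-order vanishing, this is a matter of bookkeeping) yields a map in $\Lambda^{r-1}(\ov\omega)$. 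The extra interior regularity $F\in\Lambda^{r+1}(\omega\cap U)$ is obtained by an interior elliptic bootstrap for the now-genuinely-holomorphic coordinates on the open set $\omega\cap U$, away from the boundary, using the interior Newlander-Nirenberg regularity theory.

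The main obstacle I anticipate is the geometric gluing step combined with the stability requirement: one must construct the auxiliary domain $D$ so that (i) $\partial D\supset$ a neighborhood of $p$ in $M$, (ii) $\partial D\in C^2$ and strictly pseudoconvex \emph{globally} (not just near $p$), and (iii) the constant $\de_r(D)$ stays bounded below by something we can estimate — and all three must survive the first-order normalization of the structure near $p$, which itself only has $\Lambda^r$ (not smooth) coefficients so the normalizing diffeomorphism is merely $\Lambda^{r}$-ish and distorts $\rho$ by a $\Lambda^{r-1}$ amount. Keeping track that this distortion is $C^2$-small (so as not to destroy strict pseudoconvexity and to stay within the lower-stability regime of Theorem~\ref{Global NN}) is the delicate bookkeeping that ties the whole argument together.
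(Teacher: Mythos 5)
Your outline correctly identifies the first-order normalization (kill $A(p)$ and $DA(p)$ using integrability), the equivalence of the two notions of strict pseudoconvexity once $A=o(|z|)$, the need to manufacture a relatively compact $C^2$ strictly pseudoconvex domain sharing boundary with $M$ near $p$, and the role of lower stability of $\de_r$. Those pieces all appear in the paper's argument (Lemmas~\ref{elem_pp}, \ref{A=O(2)}, \ref{Reformulation}).

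However, there is a genuine gap at the central quantitative step. You claim that after arranging $A(p)=0$ and $DA(p)=0$, one has $\znorm{A}_{B_\e(p)\cap U,\,r}\leq C\e$, so that shrinking $\e$ meets the smallness hypothesis \eqref{A_r}. This is false for the (unscaled) H\"older--Zygmund norm. First-order vanishing only forces $\|A\|_{C^0(B_\e)}=O(\e^2)$ and $\|A\|_{C^1(B_\e)}=O(\e)$; the derivatives of order $\geq 2$ and the top H\"older--Zygmund seminorm of order $r$ remain of size $O(1)$ as $\e\to0$ (e.g.\ $A(z)=z^2g(z)$ with $g(0)\neq0$). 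Since Theorem~\ref{Global NN} with $r<\infty$ requires smallness of the full $\Lambda^r$ norm — and even for $r=\infty$ requires smallness of the $\Lambda^5$ norm, which likewise does not shrink from first-order vanishing alone — simply restricting to a small ball does not suffice. A second, related difficulty: even if one could force $\znorm{A}$ down, the domain $D\subset B_\e(p)$ also shrinks, and you give no fixed comparison domain $\widetilde D$ against which lower stability of $\de_r$ could be invoked; the constants in the homotopy-operator estimate \eqref{1/2 estimate} scale with the domain, so a priori both sides of the inequality $\znorm{A}_{D,r}<\de_r(D)$ may go to zero, with no control over the race.

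The missing idea is the \emph{non-isotropic dilation} $\var_\e(z',z_n)=(\e^{-1}z',\e^{-2}z_n)$ (Proposition~\ref{InitialNorm}). After rescaling, each derivative of $A^{(\e)}(z)=A(\e z',\e^2 z_n)$ acquires an explicit power of $\e$, so that $\znorm{A^{(\e)}}_{D_\e,r'}\to0$ for \emph{every} finite $r'\leq r$ (using $A=o(|z|)$ to kill the zeroth-order term and control the $\e^{-1}$-weighted coefficient $(A^{(\e)})^n_{\bar j}$). Simultaneously, the dilated hypersurface converges in $C^2$ to a fixed Heisenberg-type model, giving a limiting domain $D_0$ with $\|\rho_\e-\rho_0\|_{C^2}\to0$; this supplies the explicit comparison domain that makes the lower stability of $\de_r(D_0)$ actionable. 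Both the smallness of the structure and the stability of the threshold are produced by the same rescaling — that is the mechanism your proposal would need and does not contain.
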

For $p\in U$, this is the classical Newlander-Nirenberg theorem \cites{MR88770, MR149505, MR0208200, MR1045639, MR0253383, MR999729}.  
 Very recently, Street ~\ci{St2020} obtained a sharp result for the elliptic structures.
 By a result of Hill~\cite{MR1128593}, the local Newlander-Nirenberg theorem with  boundary can fail 
 for a suitable formal integrable smooth complex structure on a  domain of which the boundary is smooth and has one negative Levi-eigenvalue.

As mentioned above,  under the assumptions that both boundary and almost complex structure are $C^\infty$, the global Newlander-Nirenberg theorem  with boundary was first proved by Hamilton \cite{MR0477158} and  the local version was shown by Catlin \cite{MR959270}, Hanges-Jacobowitz \cite{MR980299} independently. In fact, Hamilton proved a more general version of Theorem \ref{Global NN} assuming that $D$ is a 
relatively compact subset with smooth boundary in a complex manifold $Y$ with $H^{1}(D,T^{1,0}D)=0$. Catlin proved a local Newlander-Nirenberg theorem with smooth pseudoconvex boundary. We note that these results are all carried out in $C^\infty$ category with $\partial D\in C^\infty$, using $\dbar$-Neumann-type methods.

To prove \rt{Global NN} under the minimum requirement of $\d D\in C^2$, we will employ the homotopy formula methods  together with a Nash-Moser type iteration. These  techniques were originally employed by Webster \cites{MR999729, MR1128608, MR995504} to prove the classical Newlander-Nirenberg theorem, the CR vector bundle problem and the more difficult local CR embedding problem.  
These techniques together with a more precise interior regularity estimate for Henkin's integral solution operators of $\dbar_b$ on strictly pseudoconvex real hypersurfaces, have been successfully used by the second-named author and Webster \cites{MR2742034, MR2829316, MR2868966}  to obtain a sharp version of CR vector bundle problem and local CR embedding problem. The second-named author~\cite{MR3961327}  recently obtained a parameter version of Frobenius-Nirenberg theorem
by using similar techniques. We also mention the work of Polyakov \cite{MR2088929} who used similar techniques and obtained CR embeddings for a small perturbation of CR structures on compact regular 3-pseudoconcave CR submanifold $M$ of some complex manifold $X$ with $H^1 (M, T^{1,0}X|_M)=0$.

\medskip

The scheme of the proof of Theorem 1.1 is similar to the previous related work. However, we mention new features in the present work.  First is the use of 
the estimate of gaining $\frac{1}{2}$ derivative for homotopy operators on the closure of a $C^2$ strictly pseudoconvex domain proved recently by the second-named author~\cite{MR3961327}. Note that in previous mentioned work, interior regularity estimates of $\dbar, \dbar_b$ for homotopy formulas are used instead. Another important difference is that the Nash-Moser smoothing operator  \cite{MR0147741} was applied to the interior of the domains before.
In our case, we must find a way to use the Nash-Moser smoothing operator for
the closure of the domain $D$ since we are seeking global coordinate systems defined on $\overline D$.
To use the smoothing, we simply extend the original complex structure to a neighborhood of $\ov D$; this simple extension, however, does not preserve the formal integrability of the 
extended
 complex structure 
outside  $D$.
 The failure of the integrability is measured by the commutator $[\db, E]$ where $E$ is an extension operator for functions on $\ov D$ constructed by Stein~\cite{MR0290095}.  We shall make essential use of the vanishing order of $[\dbar, E]$ in our estimates. (See Sections \ref{sec  Settings} and \ref{four term estimate} for details.) We remark here that this commutator term is the main source for losing one derivative in our results.  The important commutator $[\dbar, E]$ was introduced by Peters \cite{MR1001710} and has been used by Michel \cite{MR1134587}, Michel-Shaw \cite{MR1671846} and others.  It is also one of the main ingredients in the $\frac{1}{2}$-gain estimate~\cite{MR3961327} for a homotopy operator on a
 strictly pseudoconvex domain with $C^2$ boundary.

The plan of the paper is as follows.  In Section \ref{Local_NN}, we first derive \rt{Local NN} from Theorem~\ref{Global NN}. In particular, we show that Catlin-Hanges-Jacobowitz's theorem is a  consequence of Hamilton's theorem,
the stability of  $\de_r(D)$ from Theorem \ref{Global NN},
 together with an initial normalization process constructed in Section \ref{Local_NN}.  In Section \ref{sec  Settings}, we recall basic facts about the standard H\"older-Zygmund norms, the Stein extension operator, Nash-Moser  smoothing operators and homotopy operators in~\cite{MR3961327}. In section \ref{approximation_of_embedding}, we derive an approximate solution of the embedding via the homotopy formula. We then obtain necessary estimates for the approximate solution and the  new almost complex structure in Sections~\ref{sec correction estimate} and  \ref{four term estimate}. In Section \ref{sec iteration}, we describe the iteration scheme and verify the induction hypotheseses. Finally, the convergence proof is carried out in Section \ref{convergence}.

\section{A reduction for Local Newlander-Nirenberg theorem  with boundary}\label{Local_NN}
\setcounter{thm}{0}\setcounter{equation}{0}

In this section, we derive Theorem~\ref{Local NN} by using \rt{Global NN}.  To achieve this, we need some preparations. First, we show that one can define the 
strict pseudoconvexity with respect to the standard complex structure instead of the given almost complex structure
near a reference point. 
Then we apply non-isotropic dilations to achieve the initial normalization condition :  $\znorm{X_{\overline\a}z}_{\cdot}\leq \de_r$  (the norm will be specified later), while the dilated hypersurface is close to the Heisenberg group near a reference point in $C^2$ norm.  Here $\de_r$ is the stability constant in Theorem \ref{Global NN} for some limiting domain under a non-isotropic dilation process. Finally, we construct a relatively compact $C^2$ strictly pseudoconvex domain $U$ which shares part of the boundary with $M$ and apply Theorem \ref{Global NN} to $(U, 
\{\widetilde X_{\ov\a}\}_{\a=1}^n)$
where $\{\widetilde X_{\ov\a}\}_{\a=1}^n$ is some suitable basis for the almost complex structure after dilation. 
We point out that the stability of $\de_r$ under $C^2$ perturbation is crucial for this argument to work.

Throughout the paper, the Greek letters $\a,\b,\g$ etc have range $1,2,\dots, n$ and roman indices $j,k$ etc have range $1,2,\dots, n-1$. We denote by $C_1,C_2,$ etc. constants bigger than $1$ and $c_1,c_2$, etc positive constants less than $1$.
We denote by $z=(z_1,\dots, z_n)$  the standard coordinates of $\C^n$, while the standard complex structure on $\C^n$ is defined by $\partial_{\overline \a}:=\frac{\partial}{\partial \overline z_\a},1\leq \a\leq n$. Set $\partial_{ \a}:=\frac{\partial}{\partial  z_ \a}$. 

We will use various constants $C(D), \delta(D)$, etc, which depend only on a domain $D$. 
Thus it will be convenient to apply some standard procedures to find defining functions of a domain. 
 A bounded domain $D$ in $\C^n$ with $C^k$ boundary with $k\geq1$ is defined by a $C^k$ function $\rho$ on $\C^n$. Thus $D$ is defined by $\rho<0$ and $\nabla\rho\neq0$ on $\pd D$.  
Locally $D$ is defined by a $C^k$ graph function $R$ via 
$$
\rho=-y_n+R(z',x_n)<0
$$
after permuting  $z_1,\dots, z_n$ and replace $z_n$ by $iz_n$ or $- iz_n$ if necessary. The collection of such functions will be denoted by $\{R_i\}$. Using a partition of unity, we can construct a defining function $\rho$ from the collection $\{R_i\}$. We may assume that $\rho=1$ away from a neighborhood of $\ov D$.  In such a way one can construct a defining function of $\rho$ which depends only on $D$ and we  shall call such a
 $\rho$ a {\em standard} defining function of $D$.

We start with the following elementary lemma showing how an almost complex structure changes with respect to a transformation of the form $F=I+f$ where $I$ is the identity mapping. This lemma is essentially in Webster \cite{MR999729}; however, we present it here for convenience of the reader and for our later proofs.

\begin{lemma}\label{elem_pp}
Let $\{X_{\overline\a}\}_{\a=1}^n$ be a $C^1$ almost complex structure defined near the origin of $\R^{2n}$.
\bppp
\item  By a $\R$-linear change of coordinates of $\C^n$, the almost complex structure $ \{X_{\overline\a}\}_{\a=1}^n$ can be transformed into $X_{\overline\a} = \d_{\overline\a} + A_{\overline\a}^\b \d_\b $ with $A(0) = 0$.
\item  Let $F=I+f$ be  a $C^1$ map 
with $f(0)=0$ and
 $Df$ small. The associated complex structure  $\{dF(X_{\overline\a})\}$ has a basis $\{X_{\overline\a}'\}$ such that  $X_{\overline\a}'=\d_{\overline\a}+{A'}_{\overline\a}^\b \d_{\b}$.
Moreover, $X_{\overline\a}F^\b=(X_{\overline\a}F^{\overline\g})({A'}_{\overline\g}^\b\circ F)$. Equivalently, in the matrix form,
\eq{AdfA}
A(z)+\d_{\overline z} f+A(z)\d_{z}f=(I+\d_{\overline z}\overline{ f(z)}+A(z) \d_z\overline{ f(z)})A'\circ F(z).
\eeq
\eppp
\end{lemma}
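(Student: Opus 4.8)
The statement to prove is Lemma~\ref{elem_pp}, a pair of elementary facts about how an almost complex structure transforms under coordinate changes. Let me sketch proofs.

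Part (i): We have vector fields $X_{\bar\alpha}$ with $C^1$ coefficients, $\mathbb{C}$-linearly independent together with conjugates at 0. The span $T^{0,1}$ at 0 is an $n$-dimensional subspace of $\mathbb{C}^{2n}$ complementary to its conjugate. We want a real-linear change of coordinates so that $T^{0,1}_0 = \text{span}(\partial_{\bar 1},\dots,\partial_{\bar n})$. Since the standard $T^{0,1}_0$ and the given one are both such subspaces, there's a complex-linear map... actually we need real-linear. The key: at a point, an almost complex structure is just a choice of complex structure $J$ on $\mathbb{R}^{2n}$, i.e., $J^2 = -I$. Any two such are conjugate by $GL(2n,\mathbb{R})$. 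So choose a real-linear $L$ with $L_* J = J_{\text{std}}$. Then in new coordinates $T^{0,1}_0$ is standard, so $X_{\bar\alpha} = \sum c_\alpha^\beta(z)\partial_{\bar\beta} + \sum A_{\bar\alpha}^\beta(z)\partial_\beta$ with $c(0)$ invertible and $A(0) = 0$; replacing $X_{\bar\alpha}$ by $(c^{-1})_\alpha^\gamma X_{\bar\gamma}$ (legitimate since this doesn't change the structure, just the frame) gives $X_{\bar\alpha} = \partial_{\bar\alpha} + A_{\bar\alpha}^\beta\partial_\beta$ with $A(0)=0$.

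Part (ii): Given $F = I + f$ with $Df$ small, $dF$ is invertible, so $dF(X_{\bar\alpha})$ spans a new almost complex structure $T'^{0,1}$. We compute $dF(X_{\bar\alpha})$ in the $z$-coordinates of the target. Writing $F = (F^1,\dots,F^n)$, for any vector field $Y$, $dF(Y) = (Y F^\beta)\partial_\beta + (Y\overline{F^\gamma})\partial_{\bar\gamma}$ evaluated at $F(z)$. Apply to $Y = X_{\bar\alpha} = \partial_{\bar\alpha} + A_{\bar\alpha}^\beta\partial_\beta$. The claim that a basis can be taken of the form $X'_{\bar\alpha} = \partial_{\bar\alpha} + A'^\beta_{\bar\alpha}\partial_\beta$ follows because, when $Df$ is small, the matrix $(X_{\bar\alpha}\overline{F^\gamma})_{\alpha\gamma}$ is close to the identity hence invertible, so we can normalize the $\partial_{\bar\gamma}$-coefficients to a standard frame; then the $\partial_\beta$-coefficient after this normalization is $A'^\beta_{\bar\alpha}\circ F$. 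Setting $dF(X_{\bar\alpha}) = (X_{\bar\alpha}\overline{F^\gamma})\,X'_{\bar\gamma}\circ F$ and reading off the $\partial_\beta$ component gives $X_{\bar\alpha}F^\beta = (X_{\bar\alpha}\overline{F^\gamma})(A'^\beta_{\bar\gamma}\circ F)$, which is the displayed identity. The matrix form~\eqref{AdfA} is just the bookkeeping: $X_{\bar\alpha}F^\beta = X_{\bar\alpha}(z_\beta + f^\beta) = A_{\bar\alpha}^\beta + (\partial_{\bar\alpha} + A_{\bar\alpha}^\gamma\partial_\gamma)f^\beta$, i.e. $A + \partial_{\bar z}f + A\partial_z f$ in matrix notation, and similarly $X_{\bar\alpha}\overline{F^\gamma} = \delta_{\alpha\gamma} + \partial_{\bar\alpha}\overline{f^\gamma} + A_{\bar\alpha}^\delta\partial_\delta\overline{f^\gamma}$, i.e. $I + \partial_{\bar z}\bar f + A\partial_z\bar f$.

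\medskip

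\noindent\emph{Proof plan.} For (i), invoke linear algebra: an almost complex structure at a point is a linear complex structure $J_0$ on $\mathbb{R}^{2n}$, and all linear complex structures are $GL(2n,\mathbb{R})$-conjugate; push forward by such an $L$ so that $T^{0,1}_0$ becomes standard, then renormalize the frame by an invertible constant matrix to kill the $\partial_{\bar\beta}$-coefficients' deviation from the identity at $0$, which forces $A(0)=0$ by linear independence. For (ii), the only real content is the chain rule $dF(Y) = (YF^\beta)\partial_\beta + (Y\overline{F^\gamma})\partial_{\bar\gamma}$ at $F(z)$, applied with $Y = X_{\bar\alpha}$; invertibility of $(X_{\bar\alpha}\overline{F^\gamma})$ for $Df$ small lets us choose the normalized frame $X'_{\bar\gamma}$, and the displayed relations follow by expanding $X_{\bar\alpha}F^\beta$ and $X_{\bar\alpha}\overline{F^\gamma}$ with $F = I + f$ and collecting into matrix form to get~\eqref{AdfA}. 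I expect the main (very mild) obstacle to be bookkeeping: being careful that all coefficient functions of $X'_{\bar\gamma}$ are evaluated at $F(z)$ rather than $z$, and tracking the smallness assumption on $Df$ that guarantees invertibility of the relevant coefficient matrix — nothing here is genuinely difficult, as the lemma is a normalization statement underlying the iteration.
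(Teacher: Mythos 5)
Your proof is correct and takes essentially the same approach as the paper. For (i) you invoke the abstract fact that all linear complex structures on $\R^{2n}$ are $GL(2n,\R)$-conjugate and then renormalize the frame, whereas the paper builds the real-linear map explicitly from $U_\a = \frac{1}{2}(X_\a + X_{\ov\a})$ and $V_\a = \frac{\sqrt{-1}}{2}(X_{\ov\a}-X_\a)$; for (ii) both you and the paper push forward $X_{\ov\a}$, identify the invertible coefficient matrix $(X_{\ov\a}F^{\ov\g})$, and read off \eqref{AdfA} by comparing $\d_\b$ and $\d_{\ov\b}$ components.
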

We remark the formula in $(ii)$
 is valid when $F=I+f$ is a diffeomorphism of $\overline D$ onto $\ov{D'}$  when $\norm{f}_{D,1}$ is sufficiently small. 
\begin{proof}
$(i)$
 Let $U_{\a} = \frac{1}{2}(X_{\a} + X_{\overline\a})$ and $V_{\a} = \frac{\sqrt{-1}}{2} (X_{\overline\a} - X_\a)$.
 We would like to find another coordinate system $w_\a = u_\a +\sqrt{-1} v_\a$ such that $\frac{\d}{\d u_\a}|_{w=0} = U_\a(0)$ and $\frac{\d}{\d v_\a}|_{w =0} = V_\a(0)$. Since $\{U_\a, V_\a\}$ and $\{\frac{\d}{\d x_\a}, \frac{\d}{\d y_\a}\}$ both span $T_0\C^n$,  then at $0$ we have
$$U_\a = a_\a^\b \frac{\d}{\d x_\b}+ b_\a^\b \frac{\d}{\d y_\b},\quad
 V_\a = c_\a^\b \frac{\d}{\d x_\b}+ d^\b_\a \frac{\d}{\d y_\b},
 $$
where the coefficient matrix  $\begin{bmatrix}a & b \\ c & d\end{bmatrix}$ is invertible. Set $x = au+bv$ and $ y= cu+dv$. In new variables $w$, we have $X_{\overline\a} = A_{\overline\a}^{\overline\g} \d_{\overline\g} + A_{\overline\a}^{\b} \d_{\b}$ where $(A_{\overline\a}^{\overline\g}(0))$ is the identity matrix $( \de_{\a}^\g)$ and $A_{\overline\a}^\b(0) = 0$. In particular, $(A_{\overline\a}^{\overline\g})$ is invertible near $0$. We can then use a linear combination  to achieve $X_{\overline\a} = \d_{\overline\a} + A_{\overline\a}^\b\d_\b$ with $A(0)=0$.

$(ii)$  Let us  show the existence of such a basis by determining the coefficient matrix $A'$. Since $T^{1,0}_{dF(X_{\overline\a})} = dF T^{1,0}_X$,  we know $F_*X_{\overline\a} = C_{\overline\a}^{\overline\b}  X'_{\overline\b}$ for some invertible matrix $(C_{\overline\a}^{\overline\b})$. Apply both sides to $F^{\overline\b}$  and use $X_{\overline\b}=\d_{\overline\b}+{A'}_{\overline\b}^\g \d_{\g}'$. Then we have $(C_{\overline\a}^{\overline\b})=(X_{\overline\a} F^{\overline\b})$, which is invertible when $Df$ is small. Consequently,
$$
X_{\overline\a} F^\b\d_\b
+X_{\overline\a} F^{\overline\b} \d_{\overline\b} =F_* X_{\overline\a}=
C_{\overline\a} ^{\overline\b}  X'_{\overline\b}.  $$
 Comparing the coefficients of $\d_{\overline\b}$ and $\d_\b$, we see that
$C_{\overline\a} ^{\overline\b}=X_{\overline\a}F^{\overline\b}$ and
   $$X_{\overline\a}F^\b=
 (X_{\overline\a} F^{\overline\g})(A_{\overline\g}'^\b\circ F).
  $$
  Since $(C_{\overline\a}^{\overline\g})$ is invertible, we have $A'^\b_{\overline\g} = (C^{-1})_{\overline\g}^{\overline\a} (X_{\overline\a}F^\b)\circ F^{-1} $.

Now identity \eqref{AdfA} follows from
 \gan
X_{\overline\a} F^\b= (\d_{\overline\a}+A_{\overline\a}^\g \d_\g)(z^\b+f^\b)= A_{\overline\a}^\b +\d_{\overline\a}f^\b+A_{\overline\a}^\g\d_\g f^\b= (A+\dbar f +A\d f)_\a^{\overline\b},\\
(X_{\overline\a}F^{\overline\g})(A_{\overline\g}'^\b\circ F)=(\d_{\overline\a}+A_{\overline\a}^\g\d_\g )(z^{\overline\b}+f^{\overline\b})(A'\circ F)=((I+\dbar \overline f + A\d \overline f)(A'\circ F))_\a^{\overline\b}.
\qedhere
\end{gather*}
\end{proof}

Using the integrability condition, we now remove the first order term in the Taylor expansion of $A$ at the origin.

\begin{lemma}\label{A=O(2)}
Suppose that a $C^1$ almost complex structure defined by $\{X_{\overline\a}\}$ with $X_{\ov\a}= \d_{\overline\a}+ A_{\overline\a}^\b \d_\b$   and  $A(0)=0$ satisfies the integrability condition \eqref{integrability} at $0$. Then we can make a polynomial change of coordinates such that in the new coordinate system, the almost complex structure is given by $X'_{\overline\a}= \d_{\overline\a}+ A'^{\b} _{\overline\a}\d_\b$ with $A'(0)=0$ and $DA'(0)=0$.
\end{lemma}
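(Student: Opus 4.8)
The plan is to take the coordinate change in the form $F=\id+f$, where $f=(f^1,\dots,f^n)$ is a vector of homogeneous quadratic polynomials chosen so that $\d_{\overline z}f$ cancels the degree-one part of $A$ at the origin; the identity $X_{\overline\a}F^\b=(X_{\overline\a}F^{\overline\g})({A'}^\b_{\overline\g}\circ F)$ from \rl{elem_pp}\,$(ii)$ will then force the new structure matrix $A'$ to vanish to second order at $0$. The one substantial ingredient is that the degree-one part of $A$, viewed for each fixed $\b$ as a $(0,1)$-form, is $\dbar$-closed, and this is exactly what formal integrability of $\{X_{\overline\a}\}$ at $0$ supplies.

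Concretely, I would write $A_{\overline\a}^\b(z)=P_{\a\g}^\b z_\g+Q_{\a\g}^\b\overline z_\g+O(|z|^2)$ with $P_{\a\g}^\b=\d_\g A_{\overline\a}^\b(0)$ and $Q_{\a\g}^\b=\d_{\overline\g}A_{\overline\a}^\b(0)$. Since $A(0)=0$, the terms of $[X_{\overline\a},X_{\overline\b}]$ quadratic in $A$ vanish at $0$, so there $[X_{\overline\a},X_{\overline\b}]=\big(\d_{\overline\a}A_{\overline\b}^\g(0)-\d_{\overline\b}A_{\overline\a}^\g(0)\big)\d_\g=\big(Q_{\b\a}^\g-Q_{\a\b}^\g\big)\d_\g$, whereas $a_{\overline\a\overline\b}^{\overline\g}X_{\overline\g}$ equals $a_{\overline\a\overline\b}^{\overline\g}\d_{\overline\g}$ at $0$ because $A(0)=0$. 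Comparing the components along the linearly independent fields $\d_\g$ and $\d_{\overline\g}$, both expressions vanish; in particular $Q_{\a\b}^\g=Q_{\b\a}^\g$.

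Using this symmetry, I would then check that
\[
f^\b(z):=-P_{\a\g}^\b z_\g\overline z_\a-\tfrac12 Q_{\a\g}^\b\overline z_\g\overline z_\a
\]
satisfies $\d_{\overline\a}f^\b=-\big(P_{\a\g}^\b z_\g+Q_{\a\g}^\b\overline z_\g\big)$, hence $A_{\overline\a}^\b+\d_{\overline\a}f^\b=O(|z|^2)$. Each $f^\b$ is homogeneous of degree two, so $f(0)=0$ and $Df(0)=0$; thus $F=\id+f$ has $DF(0)=\id$ and, on a small enough neighborhood of $0$, is a diffeomorphism onto its image with $Df$ small, so \rl{elem_pp}\,$(ii)$ applies and produces $X'_{\overline\a}=\d_{\overline\a}+{A'}^\b_{\overline\a}\d_\b$.

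Finally I would plug into $X_{\overline\a}F^\b=(X_{\overline\a}F^{\overline\g})({A'}^\b_{\overline\g}\circ F)$. On one side $X_{\overline\a}F^\b=A_{\overline\a}^\b+\d_{\overline\a}f^\b+A_{\overline\a}^\g\d_\g f^\b=O(|z|^2)$, the first two terms cancelling to second order by the choice of $f$ and the last being a product of two quantities vanishing at $0$. On the other side $X_{\overline\a}F^{\overline\g}=\de_\a^\g+O(|z|)$, which is invertible near $0$, so ${A'}^\b_{\overline\g}\circ F=O(|z|^2)$ for all $\b,\g$. Since $F$ is a local diffeomorphism fixing $0$ with invertible differential there (and $A'$ is $C^1$ because $F$, $F^{-1}$ are smooth and the given coefficients are $C^1$, so $DA'(0)$ makes sense), this gives $A'(w)=O(|w|^2)$, i.e.\ $A'(0)=0$ and $DA'(0)=0$. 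The only delicate point is the second paragraph — that formal integrability at the single point $0$, after normalizing $A(0)=0$, yields exactly the compatibility relation $Q_{\a\b}^\g=Q_{\b\a}^\g$ needed to solve $\d_{\overline z}f=-A^{(1)}$; the construction of $f$ and the order count are then routine.
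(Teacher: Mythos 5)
Your argument is correct and essentially identical to the paper's: the same quadratic correction $f^\b=-\d_\g A^\b_{\overline\a}(0)\,z^\g\overline z^\a-\tfrac12\d_{\overline\g}A^\b_{\overline\a}(0)\,\overline z^\g\overline z^\a$, the same use of the bracket relation at $0$ to obtain $\d_{\overline\a}A^\g_{\overline\b}(0)=\d_{\overline\b}A^\g_{\overline\a}(0)$, and the same reduction via Lemma~\ref{elem_pp}$(ii)$ to showing $A+\dbar f+A\d f$ vanishes to higher order at the origin. One small overstatement: since $A$ is only $C^1$, the cancellation gives $A_{\overline\a}^\b+\d_{\overline\a}f^\b=o(|z|)$ (not $O(|z|^2)$) and hence $A'=o(|w|)$, which is exactly what is needed for $A'(0)=0$ and $DA'(0)=0$.
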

\begin{proof} Throughout the paper, we write $f(\zeta)=o(|\zeta|^k)$ if $\lim_{|\zeta|\to 0}f(x)/{|\zeta|^k}=0$, where $\zeta$ are real or complex variables.
We make a polynomial change of coordinates  $F=I+f$ where 
$$
f^\b = -\d_\g A^\b_{\overline\a }(0) z^\g \overline z^\a-\frac{1}{2}\d_{\overline \g}A_{\overline\a}^\b(0) \overline z^\g \overline z^\a.
$$
According to Lemma~\ref{elem_pp}, we have\[
A+\dbar f+A\d f=(I+\overline{\d f}+A\d \overline f)A'\circ F.
\]
Shrinking the domain if necessary, we can assume that $(I+\overline{\d f}+A\d \overline f)$ and $F$ are invertible. Therefore, in order to show $A'(z)=o(|z|)$, it suffices to show that $A(z)+\dbar f(z)+A(z)\d \overline f(z) = o(|z|)$.

Since our structure satisfies the integrability condition at $0$, then $[X_{\overline\a}, X_{\overline\b}]$ is in the span of $X_{\overline 1},\cdots, X_{\overline n}$ at $0$. This implies that via $A(0)=0$,
\eq{integrability@0}
\d_{\overline\a}A_{\overline\b}^\g(0)
=\d_{\overline\b}A_{\overline\a}^\g(0).
\eeq
Plugging \eqref{integrability@0} into $A+\dbar f+ A\d f$, we get 
\al
A_{\overline\a}^\b&+\d_{\overline\a}f^\b+A_{\overline\a}^\g \d_\g f^\b \\ \nonumber
&= A_{\overline\a}^\b - \d_\g A_{\overline\a}^\b (0) z^\g -\frac{1}{2}\d_{\overline \g }A_{\overline\a}^\b(0) \overline z ^\g -\frac{1}{2}\d_{\overline\a} A_{\overline \g }^\b(0) \overline z^\g- A_{\overline\a}^\rho \d_\rho A_{\overline \g}^\b(0) \overline z^\g\\
&=A_{\overline\a}^\b - \d_\g A_{\overline \a}^\b (0) z^\g -\d_{\overline \g}A_{\overline\a}^\b(0) \overline z^\g-  A_{\overline\a}^\rho \d_\rho A_{\overline \g}^\b(0) \overline z^\g,
\nonumber
\end{align}
where we have used the integrability condition at $0$ in the second equality. Now it is clear that the right-hand side vanishes up to second order at the origin and thus the lemma follows.
\end{proof}

Note that in the formulation of Theorem~\ref{Local NN}, we require the boundary to be strictly pseudoconvex with respect to the given almost complex structure. However, in order to apply Theorem \ref{Global NN}, it is important that the boundary is strictly pseudoconvex with respect to the standard complex structure. Next lemma shows that these two conditions are locally equivalent provided the given structure and the standard one agree up to second order at a reference point, after some initial normalization.

\begin{lemma}\label{Reformulation}
Let $M\subset\d U$ be a $C^2$ real hypersurface.  Let $X_{\overline\a}= \d_{\overline\a}+A_{\overline\a}^\b \d_\b$, $\a=1,\cdots, n$ define a formally
integrable
$C^1$   complex structure on the one-sided domain $U\cup M$. Suppose that $0\in M$ and $A(z)=o(|z|)$. Assume that $M$ is strictly pseudoconvex with respect to  $(U\cup M, \{X_{\overline\a}\})$  $($see \eqref{spc_X} for definition$)$. The following hold.
\bppp
\item  $M$ is strictly pseudoconvex  with respect to the standard complex structure near the origin.
\item After a local polynomial change of coordinates that preserves the condition $A(z) = o(|z|)$, there exists a defining function $r$ for $M$, defined near the origin, such that $\rho<0$ on $U$, $\rho=0$ on $M$, 
 and 
  \[\rho(z)=-y_n+|z'|^2+h(z',x_n)\]
where 
$h = o(2)$ is a $C^2$ function.
\eppp
\end{lemma}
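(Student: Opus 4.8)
The plan is to exploit the hypothesis $A(z)=o(|z|)$ together with formal integrability to show that the ``mixed'' second-order part of the Levi form for the structure $X$ agrees with the Levi form for the standard structure at the origin, and then transfer the strict pseudoconvexity from one to the other.

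\medskip

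\textbf{Step 1: Reduce to a convenient defining function.} Starting from any $C^2$ defining function $\rho$ for $M$ near $0$ with $d\rho(0)\neq 0$, I would make a $\C$-linear change of coordinates (which does not disturb $A(z)=o(|z|)$, being linear) so that $d\rho(0)$ is a negative multiple of $dy_n$; rescaling $\rho$, we may write
\[
\rho(z)=-y_n+Q(z)+o(|z|^2),
\]
where $Q$ is the real quadratic part of $\rho$ at $0$. Splitting $Q$ into its pluriharmonic part $2\RE(\sum b_{\a\b}z_\a z_\b + \sum c_\a z_\a z_n)$-type terms and the Hermitian part $\sum L_{\a\bar\b}z_\a\bar z_\b$, a holomorphic polynomial change of coordinates $z_n\mapsto z_n + (\text{quadratic})$, $z'\mapsto z'$, absorbs the pluriharmonic terms into the $-y_n$; such a change is holomorphic, hence it does not affect that $A=o(|z|)$ (it is a standard computation from \eqref{AdfA} with $f$ holomorphic and vanishing to second order that $A'=o(|z|)$ whenever $A=o(|z|)$). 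This brings $\rho$ to the form $\rho(z)=-y_n+\sum L_{\a\bar\b}z_\a\bar z_\b+o(|z|^2)$.

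\medskip

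\textbf{Step 2: Match the two Levi forms at $0$.} Here is where $A(z)=o(|z|)$ and integrability enter. The $(1,0)$-vectors for $X$ are $X_\a=\partial_\a+\bar A{}^{\bar\beta}_{\a}\partial_{\bar\beta}$ (conjugates of the $X_{\bar\a}$), so at $0$ we have $T^{1,0}_0(U\cup M,X)=T^{1,0}_0\C^n$, and since $DA(0)=0$ the first derivatives of the coefficients of $X_\a$ also vanish at $0$. Computing $\sqrt{-1}\,\partial_X\dbar_X\rho(0)(v,\bar v)$ for $v\in T^{1,0}_0(U\cup M,X)\cap(T_0M\otimes\C)$ (equivalently $\sum v_\a\,\partial_\a\rho(0)=0$, i.e. $v_n=0$) amounts to evaluating $-v_\a\bar v_\beta\,(X_\a X_{\bar\beta}\rho)(0)$ up to the torsion term $\sum a^{\bar\gamma}_{\a\bar\beta}(0)(X_{\bar\gamma}\rho)(0)$; but $X_{\bar\gamma}\rho(0)=\partial_{\bar\gamma}\rho(0)+A(0)\cdots = \partial_{\bar\gamma}\rho(0)$, which is annihilated against the contraction with the Hermitian part after using $\sum v_\a\partial_\a\rho(0)=0$. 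Crucially, because $DA(0)=0$, all the terms in $X_\a X_{\bar\beta}\rho(0)$ involving derivatives of $A$ drop out, so $X_\a X_{\bar\beta}\rho(0)=\partial_\a\partial_{\bar\beta}\rho(0)=L_{\a\bar\beta}$. Hence
\[
\sqrt{-1}\,\partial_X\dbar_X\rho(0)(v,\bar v)=\sum_{\a,\beta=1}^{n-1} L_{\a\bar\beta}\,v_\a\bar v_\beta
\]
for all $v'\neq 0$, and the hypothesis \eqref{spc_X} says this is positive definite on $\C^{n-1}$. By the definition recalled in the introduction, that is exactly strict pseudoconvexity of $M$ at $0$ with respect to the standard structure, proving $(i)$. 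Then after a unitary change of the $z'$-variables diagonalizing $(L_{j\bar k})$ and a real dilation making the eigenvalues equal to $1$ (unitary and real-linear changes preserve $A=o(|z|)$), we get $\rho(z)=-y_n+|z'|^2+h(z',x_n)$ with $h=o(2)$, where I should still check that the remaining $\partial\bar\partial$-terms in $z_n$ and the mixed $z_n$-$z'$ terms at second order can be removed by one further holomorphic quadratic change $z_n\mapsto z_n+(\text{quadratic in }z)$; the pluriharmonic and mixed $z_n\bar z_j$ coefficients are killed by such a change, and any residual $|z_n|^2$ coefficient is lower order because on $M$ one has $y_n=O(|z'|^2+x_n^2)$, so it folds into $h$. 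This yields $(ii)$.

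\medskip

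\textbf{Main obstacle.} The delicate point is Step 2: verifying carefully that the torsion/commutator contribution $a^{\bar\gamma}_{\a\bar\beta}(0)\,X_{\bar\gamma}\rho(0)$ and all $DA(0)$-terms genuinely vanish when restricted to the relevant subspace, so that the intrinsic Levi form of $X$ literally coincides with the standard Levi form of $\rho$ at $0$ — this is what makes $A(z)=o(|z|)$ (and not merely $A(0)=0$) indispensable, and it is easy to lose a term here. The bookkeeping in Step 1 (tracking which polynomial coordinate changes preserve $A(z)=o(|z|)$) is routine but must be stated precisely: the safe observation is that any \emph{holomorphic} change $F=I+f$ with $f(0)=0$, $Df(0)=0$ preserves $A(z)=o(|z|)$ by Lemma~\ref{elem_pp}, and any $\R$-linear change preserves it trivially, and these are the only types used.
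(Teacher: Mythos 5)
Your proposal takes essentially the same route as the paper's proof: compute the $X$-Levi form at $0$, transfer strict pseudoconvexity to the standard structure using $A(z)=o(|z|)$, then normalize the defining function by linear and holomorphic-quadratic coordinate changes while tracking that each preserves $A=o(|z|)$. However, there is a gap in your Step 2. The torsion contribution to $\sqrt{-1}\,\partial_X\bar\partial_X\rho(0)(v,\bar v)$ is $\sum_{\alpha,\beta<n}v_\alpha\bar v_\beta\sum_\gamma a^{\bar\gamma}_{\alpha\bar\beta}(0)\,X_{\bar\gamma}\rho(0)$, and you assert it is ``annihilated against the contraction with the Hermitian part after using $\sum v_\alpha\partial_\alpha\rho(0)=0$.'' That is not how the indices work: the summation index $\gamma$ inside $X_{\bar\gamma}\rho(0)$ is not contracted against $v$, and the tangency condition only forces $v_n=0$. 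Since $X_{\bar\gamma}\rho(0)=\partial_{\bar\gamma}\rho(0)=0$ for $\gamma<n$ while $\partial_{\bar n}\rho(0)\neq 0$, the torsion term actually collapses to $\partial_{\bar n}\rho(0)\sum_{\alpha,\beta<n}v_\alpha\bar v_\beta\, a^{\bar n}_{\alpha\bar\beta}(0)$, which your stated argument gives no reason to kill.

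What actually kills it is the vanishing of the structure functions at the origin: $a^{\bar\gamma}_{\alpha\bar\beta}(0)=-\omega^{\bar\gamma}\bigl([X_\alpha,X_{\bar\beta}]\bigr)(0)=0$, because the bracket $[X_\alpha,X_{\bar\beta}]$ is built entirely out of $A$, $\bar A$, and their first derivatives, all of which vanish at $0$ when $A(z)=o(|z|)$. This is exactly the observation the paper records as ``$C^{\bar\gamma}_{\alpha\bar\beta}(0)=0$ since $A=o(|z|)$.'' Your Main Obstacle paragraph does flag that the torsion and $DA(0)$-terms must genuinely vanish and that $A(z)=o(|z|)$ (not merely $A(0)=0$) is indispensable for this, so you have identified the right ingredient; but the mechanism you offer in Step 2 (contraction against the tangent space) is spurious and the step as written does not close. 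Otherwise the plan, including the bookkeeping in Step 1 and the normalization in part (ii), matches the paper's.
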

\begin{proof} 

$(i)$   Since $X_{\overline\a}= \d_{\overline\a}+A_{\overline\a}^\b \d_\b$, $\a=1,2\cdots, n$ form a basis of $T^{0,1}_X \C^n$ near $0$, we can find the dual frames  $\om^\a, \om^{\overline\a}$ of $X_{\a}, X_{\overline\a}$ near $0$. Let $\Lambda_{0,X}^{p,q}$ denote the germ of smooth $(p,q)$ forms with respect to the almost complex structure $X$ at $0$. More precisely, $u = u_{I\overline J} \om^I \we \om^{\overline J} \in \Lambda^{p,q}_{0,X}$ where $I,J$ are multi-indices, $|I|= p, |J| = q$ and $u_{I\overline J}$ are elements in the germ of smooth functions at $0$. Denote the decomposition of the exterior derivative with respect to $\{X_{\overline\a}\}$ by $d=\d_X+\dbar_X$. See \cite{kobayashi1996foundations}*{P. 126}. Thus for a function $r$,
\ga
dr=X_\a r\om^\a+X_{\ov\a}r\om^{\ov\a},\\
\partial_X\ov \partial_X r=(X_\a X_{\ov\b}r + X_{\ov\g}r C^{\ov\g}_{\a\ov\b})\om^\a\wedge\om^{\ov\b},
\end{gather}
where $C_{a\overline \b}^{\overline \g} = -\om^{\overline\g}([X_{\a}, X_{\overline\b}])$. Notice in particular that, $C_{a\overline \b}^{\overline \g}(0) = 0$ since $A = o(|z|)$.

According to \eqref{spc_X}, we need to show  that 
$\sqrt{-1}\d_X\dbar_X r(0)$ is positive-definite on $ \mathbb{L}_X$ if and only if  $\sqrt{-1}\d\dbar \rho(0)>0$ on  ${\mathbb{L}}$ where $\mathbb{L}_X = T^{1,0}_{X} \C^n \cap \C T_0M$ and   $\mathbb{L} = T^{1,0}\C^n \cap \C T_0M$.

Write $r(z)=\IM(\sqrt{-1} c_\a z^\a)+O(2)$. Since $d \rho \ne 0$ at $0$, we may assume $\frac{\d \rho}{\d y_n}\neq 0$  by permuting coordinates which preserves conditions on $A_{\ov\a}^\b$. Consequently, the linear transformation  $(z',z^n)\to (z', \sqrt{-1}  c_\a z^\a)$ preserves $A (z)= o(|z|)$ and we have\[
\rho=
-y_n+O(2).
\]
Applying the implicit function theorem to $\rho(z',x_n,y_n)=0$, we obtain $y_n=F(z',x_n)$ where $F=O(2)$. More precisely, we can write\[
\begin{aligned}
y_n= F(z',x_n)&=
 a_{j\overline k} z^j \overline z^j+2\RE(
 b_{jk}z^j z^k+ 
 c_{j }z^j x^n)+ o(|z'|^2+x_n^2)\\
&=
a_{j\overline k} z^j \overline z^k+2\RE(
 b_{jk}z^j z^k+
 c_{j}z^j z^n)+ o(|z'|^2 + x_n^2)\\
\end{aligned}
\]
where in the second line, we use the fact that $F = O(2)$.

Then it is easy to see that $L_i = X_i - \frac{X_i \rho}{X_n \rho}X_n$, $i=1,\cdots, n-1$ form a basis for $\mathbb{L}_X$ near $0$. Therefore, near the origin, we have
$$ 
\d_X \dbar_X \rho (L_i, L_{\overline j})=\sum_{\a,\b,\g} (X_\a X_{\overline\b} \rho + X_{\ov\g}\rho C_{a\overline \b}^{\overline \g}) (\om^\a\we \om^{\overline \b}) (X_i - \frac{X_i \rho}{X_n \rho}X_n,  X_{\overline j} - \frac{X_{\overline j} \rho}{X_{\overline n} \rho}X_{\overline n}).
$$ 
Using the fact that $A=o(|z|)$, $C_{\a\overline\b}^{\overline\g}(0)=0$ and $\frac{\d \rho}{\d z_i}(0)=0$ for $i = 1,2,\cdots, n-1$, we have
\eq{}
\d_X \dbar_X \rho (L_i, L_{\overline j})(0)= (\d_\a\d_{\overline\b}\rho)(0) (dz^\a \we dz^{\overline\b})(\d_i, \d_{\overline j}).
\eeq
The proof is then complete by noticing that $\{\d_i\}_{i=1}^{n-1}$ form a basis of $\mathbb{L}$ at $0$ with respect to the standard complex structure.

$(ii)$ According to the first part, it suffices to show the conclusion for a strictly pseudoconvex $C^2$ real hypersurface in $\C^n$. The proof is standard, but we include it here to ensure that the condition $A = o(|z|)$ is preserved which is required in the next lemma.
 
Let us make  a second order change of coordinates
$$z'\to z', \quad z^n\to z^n-\sqrt{-1}\left( b_{jk}z^j z^k+ c_{\a n}z^\a z^n\right),
$$
which clearly preserves the condition $A (z)= o(|z|)$ according to part $(ii)$ of Lemma \ref{elem_pp}. We have
\[
\rho=-y_n+
 a_{jk}z^j \overline z^k+h(z', x_n), \quad h(z',x_n) = o(|z'|^2 + |x_n|^2).
\]

Since $M$ is strictly pseudoconvex, we see that the hermitian matrix $(a_{j\overline k})$ is  positive definite in $z'$. The final expression then follows from a complex linear change of coordinates. Note that the latter also preserves $A(z) = o(|z|)$.
It is clear from our construction that we still have $h(0) = Dh(0) = D^2 h(0) =0$.
\end{proof}

We can now reformulate Theorem~\ref{Local NN} in an equivalent form that requires the boundary to be strictly pseudoconvex with respect to the standard complex structure. Indeed, since the integrability condition of our almost complex structure holds at the origin by continuity, we can assume that $A=o(|z|)$ by \rl{A=O(2)}. Then according to Lemma \ref{Reformulation}, the two assumptions are equivalent.
Next, we achieve initial normalization by a non-isotropic dilation.

\begin{prop}\label{InitialNorm}
Let $M\subset \d U$ be a $C^2$ real hypersurface containing the origin.  Let $X_{\overline\a}= \d_{\overline\a}+A_{\overline\a}^\b \d_\b$, $\a=1,\cdots, n$ be $\Lambda^r(U\cup M)$, $1< r <\infty$, integrable almost complex structure defined on the one-sided domain $U\cup M$ with $A(z)=o(|z|)$.  Assume that $M$ is strictly pseudoconvex with respect to $(U\cup M, \{X_{\overline\a}\})$. Then after a non-isotropic dilation $\var_\e (z', z_n)= (\e^{-1} z', \e^{-2} z_n)$ where $\e>0$ is sufficiently small, we have the following
\bppp
\item  there exists some open set $B \subset \C^n$ and a $C^2$ function 
$\rho_\e :   B\to \R$, such that $D_\e = \{z\in B :   \rho_\e(z) < 0\}  \subset \var_\e(U\cup M)$ is a connected $C^2$ strictly pseudoconvex domain which shares part of the boundary with $\var_\e(M)$ near origin. Moreover, there exists a $C^2$ function $\rho_0 :  B\to \R$  such that $\lim_{\e\to 0}\norm{\rho_\e - \rho_0}_{B,2} = 0$ and $D_0 \defeq \{z\in B :  \rho_0(z) < 0\}$ is also a connected  $C^2$ strictly pseudoconvex domain,
\item  on $\ov D_\e$ each $d\var_\e X_{\ov\b}$ is spanned by  $X^\e_{\ov\a}=\pd_{\ov\a}+(A^{(\e)})^{\b}_{\ov\a}\pd_{\b}$, $\a=1,\dots, n$, where $\znorm{ A^{(\e)}}_{D_\e,r'}$ tends to $0$ with $\e$ for any finite $r'\leq r$.
\eppp
\end{prop}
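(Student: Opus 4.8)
The plan is to first put $M$ in a normal form by \rl{Reformulation}, then perform the non-isotropic dilation by hand on the normalized defining function and on the coefficient matrix, and finally close up the blown-up one-sided neighborhood to a bounded strictly pseudoconvex domain by intersecting with a large ball and smoothing the resulting edge.

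First I would apply \rl{Reformulation}: after a local polynomial change of coordinates preserving $A(z)=o(|z|)$, we may assume that on a ball $\{|z|<r_0\}$ the domain $U$ is $\{\rho<0\}$ with
\[
\rho(z)=-y_n+|z'|^2+h(z',x_n),\qquad h\in C^2,\quad h(0)=Dh(0)=D^2h(0)=0 .
\]
Writing $\var_\e^{-1}(w)=(\e w',\e^2 w_n)$, the one-sided neighborhood $\var_\e(U\cup M)$ is, on $\var_\e(\{|z|<r_0\})$, defined by $\rho_\e^\sharp\le 0$, where
\[
\rho_\e^\sharp(w):=\e^{-2}\rho\big(\var_\e^{-1}(w)\big)=-v_n+|w'|^2+g_\e(w),\qquad g_\e(w):=\e^{-2}h(\e w',\e^2\RE w_n).
\]
For any fixed ball $\{|w|<R\}$ and all small $\e$, the point $\var_\e^{-1}(w)$ stays in $\{|z|<r_0\}$, so $\rho_\e^\sharp$ is defined there. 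I would then check that $g_\e\to 0$ in $C^2(\{|w|\le R\})$: a real partial derivative of $g_\e$ of order $m\le 2$ equals $\e^{\,a+2b-2}(\pd^\alpha h)(\var_\e^{-1}w)$, where $a$ and $b$ count derivatives in the $z'$- and $x_n$-directions and $a+b=m$; since $a+2b-2\ge 0$, and since $h=o(|z|^2)$, $Dh=o(|z|)$ and $D^2 h$ is continuous with $D^2h(0)=0$, each such term tends to $0$ uniformly on $\{|w|\le R\}$. Hence $\rho_\e^\sharp\to\rho_0^\sharp:=-v_n+|w'|^2$ in $C^2$, whose zero set is the Heisenberg hypersurface, which is strictly pseudoconvex.

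For part (i), I would fix $R$ large, set $B=\{|w|<2R\}$, and observe that $\var_\e(U)\cap\{|w|<R\}$ is a connected domain bounded by two strictly pseudoconvex hypersurface pieces: part of the sphere $\{|w|=R\}$, and part of $\var_\e(M)=\{\rho_\e^\sharp=0\}$ — the latter being strictly pseudoconvex by part (i) of \rl{Reformulation} together with the fact that $\var_\e$ is biholomorphic, so that $\var_\e(M)$ is strictly pseudoconvex on $\{|w|\le R\}$ for small $\e$. Applying a regularized maximum to $\rho_\e^\sharp$ and $|w|^2-R^2$ rounds off the edge and produces a $C^2$ defining function $\rho_\e$ on $B$ with $D_\e:=\{\rho_\e<0\}\Subset B$ a connected $C^2$ strictly pseudoconvex domain; since near the origin $\rho_\e^\sharp<|w|^2-R^2$ by a definite margin, there $\rho_\e=\rho_\e^\sharp$, so $\pd D_\e$ shares a neighborhood of the origin of $\var_\e(M)$, and $D_\e\subset\{\rho_\e^\sharp<0\}=\var_\e(U)\subset\var_\e(U\cup M)$. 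Running the same construction with $g_\e\equiv 0$ gives $\rho_0$ and $D_0$, and because $g_\e\to 0$ in $C^2$ and the regularized maximum depends continuously on its arguments, $\norm{\rho_\e-\rho_0}_{B,2}\to 0$. This proves (i).

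For part (ii), since $\var_\e$ sends $z_j\mapsto\e^{-1}w_j$ $(j<n)$ and $z_n\mapsto\e^{-2}w_n$, one has $\var_{\e*}\pd_{z_j}=\e^{-1}\pd_{w_j}$, $\var_{\e*}\pd_{z_n}=\e^{-2}\pd_{w_n}$, and likewise for the conjugate fields. Applying this to $X_{\ov j}=\pd_{\ov z_j}+A^\beta_{\ov j}\pd_{z_\beta}$ and rescaling gives
\[
X^\e_{\ov j}:=\e\,\var_{\e*}X_{\ov j}=\pd_{\ov w_j}+\sum_{k<n}(A^k_{\ov j}\circ\var_\e^{-1})\,\pd_{w_k}+\e^{-1}(A^n_{\ov j}\circ\var_\e^{-1})\,\pd_{w_n},
\]
\[
X^\e_{\ov n}:=\e^2\,\var_{\e*}X_{\ov n}=\pd_{\ov w_n}+\sum_{k<n}\e\,(A^k_{\ov n}\circ\var_\e^{-1})\,\pd_{w_k}+(A^n_{\ov n}\circ\var_\e^{-1})\,\pd_{w_n},
\]
so every entry of $A^{(\e)}$ has the form $\e^c\,(A^\bullet_{\ov\bullet}\circ\var_\e^{-1})$ with $c\in\{-1,0,1\}$, the value $c=-1$ occurring only for $(A^{(\e)})^n_{\ov j}$. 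Since $A=o(|z|)$ and $A\in\Lambda^r$ with $r>1$, we have $A(0)=0$ and $DA(0)=0$; combined with the anisotropic scaling relations $\sup_{|w|\le R}|f\circ\var_\e^{-1}|\le\sup_{|\zeta|\le C\e}|f|$, $\ \pd^\alpha(f\circ\var_\e^{-1})=\e^{\,\wt(\alpha)}(\pd^\alpha f)\circ\var_\e^{-1}$ with $\wt(\alpha)\ge|\alpha|$, and $[\,g\circ\var_\e^{-1}\,]_s\le\e^{\,s}[g]_s$ on $\{|w|\le R\}$, a bookkeeping of exponents shows $\znorm{A^{(\e)}}_{D_\e,r'}\to 0$ for every finite $r'\le r$: in $(A^{(\e)})^n_{\ov j}$ the factor $\e^{-1}$ is compensated by a power of $\e$ produced by each derivative, except in the zeroth-order term where it is absorbed by $A^n_{\ov j}(\var_\e^{-1}w)=o(\e)$, while the borderline first-order contributions vanish because $DA(0)=0$. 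This gives (ii).

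The main obstacle is the last estimate in (ii): the weight $\e^{-1}$ attached to the $\pd_{w_n}$-coefficient of $X^\e_{\ov j}$ is exactly critical for the sup-norm and is killed only by the infinitesimal hypothesis $A=o(|z|)$; one must verify that in every piece of the Zygmund norm $\znorm{\cdot}_{D_\e,r'}$ — and under the \emph{anisotropic} dilation, where a $z_n$-derivative carries weight $2$ — this $\e^{-1}$ is paired with a compensating power of $\e$, with the vanishing of $A$, or with the vanishing of $DA$ at the origin. The domain construction in (i) is essentially standard; the only point requiring care is that the smoothing of the edge between $\var_\e(M)$ and the sphere keeps $\pd D_\e$ both $C^2$ and strictly pseudoconvex, which one checks using that both hypersurface pieces are strictly pseudoconvex near the edge and that $g_\e$ is $C^2$-small.
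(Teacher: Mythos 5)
Your proposal is correct and follows essentially the same strategy as the paper: normalize $\rho$ via Lemma~\ref{Reformulation}, perform the anisotropic dilation, close up the one-sided neighborhood to a bounded strictly pseudoconvex domain, and estimate the rescaled coefficients using the vanishing order $A=o(|z|)$ (hence $A(0)=0$, $DA(0)=0$) to kill the critical $\e^{-1}$ weight on $(A^{(\e)})^n_{\ov j}$. Part~(ii) is identical in substance; in fact your exponent bookkeeping is more explicit than the paper's one-line justification.

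The one real departure is the closure construction in part~(i). The paper does not intersect with a large ball and take a regularized maximum; it instead adds the plurisubharmonic cutoff $5\chi(|z|^2)$ directly to the rescaled defining function, with $\chi$ a smooth convex non-decreasing function vanishing on $(-\infty,1]$, and then verifies strict pseudoconvexity of $\{\rho_\e<0\}$ by splitting $\pd D_\e$ into a neighborhood of $M_\e$ (where strict Levi positivity comes from $M_\e$) and the rest (where $\chi(|z|^2)$ is strictly plurisubharmonic). Your regularized-max route is also standard and leads to the same conclusion, but it requires a small extra argument to check that the regularized max of $\rho_\e^\sharp$ (whose quadratic part $-v_n+|w'|^2$ is only PSH, not strictly PSH) and $|w|^2-R^2$ still has strictly pseudoconvex zero set near the edge; the paper's splitting argument avoids this. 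The paper also explicitly notes that $\rho_0$ is convex so $D_0$ is connected, and connectedness of $D_\e$ then follows from $C^2$-closeness; you should include an analogous sentence.

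Two minor slips: (a) near the origin you want $\rho_\e^\sharp>|w|^2-R^2$, not $<$, since $\rho_\e^\sharp(0)\approx 0$ while $|w|^2-R^2\approx-R^2$; the conclusion $\rho_\e=\rho_\e^\sharp$ there is of course what you intend. (b) The asserted inequality $a+2b-2\ge 0$ fails for $m\le 1$ with $b=0$ (the power is $-2$ when $m=0$ and $-1$ when $m=1$, $b=0$); your list of compensating facts ($h=o(|z|^2)$, $Dh=o(|z|)$, $D^2h$ continuous with $D^2h(0)=0$) is exactly what makes each such term tend to $0$, so the conclusion stands, but the stated uniform exponent bound should be dropped or corrected.
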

\begin{proof}
$(i)$  By the  second part of Lemma \ref{Reformulation},  we can assume that  
the defining function of $M\cup U$ is locally given by 
\[
\rho(z) = -y_n +|z'|^2 + h(z', x_n), \quad \text{$h = o(|z'|^2 + x_n^2)$},
\]
while the condition $A(z)= o(|z|)$ is preserved. Let us denote
$B_{a} = \{(z',z_n)\in \C^n  : |z|< a\}$
 for any $a>0$.

We apply the dilation $\var_\e$ to $U\cup M$. Then the new defining function for $M_\e\defeq \var_\e(M)$  can locally be written as
 \[
\hat \rho_\e (z) = - y_n + |z'|^2 + \e^{-2} h(\e z', \e^{2}x_n),\quad h=o(|z'|^2+x_n^2).
\]

Without loss of generality, we may assume that $\hat \rho_\e$ are defined on $B_2$.
Moreover, we have $\var_\e(U\cup M)\cap B_2=\{z\in B_2:  \hat \rho_\e(z) < 0\}$.
Then we shall construct a $C^2$ strictly pseudoconvex domain $D_\e$ such that
$$
B_1\cap \var_\e(U\cup M)\subset  \ov{D_\e} \subset B_2 \cap \ov{\var_\e(U\cup M)}$$
as follows.

  Let $\chi:  \R\to \R^{+}$ be a smooth non-decreasing convex function such that $\chi = 0 $ on $(-\infty , 1]$ and
  $\chi(4)=1$.
  Moreover,  we assume $0< \chi'(x)\leq 1$ for $x\in (1,4)$.  Define $D_\e = \{z\in B_2:  \rho_\e(z)<0\}$ where
 \[
 \rho_\e (z)= -y_n + |z'|^2 +  \e^{-2} h(\e z', \e^{2}x_n) + 5\chi (|z|^2).
\]
Note that $D_\e \subset \var_\e(U\cup M)$ since we have added a non-negative term to $\hat \rho_\e$.

We check that $D_\e$ satisfies all the requirements. Since $\chi = 0$ on $(-\infty, 1]$,
 one has $B_1\cap \var_\e(U\cup M) \subset D_\e$. In order to show $\ov {D_\e} \subset B_2\cap \ov{\var_\e(U\cup M)}$, it suffices to show that for all $z\in \d B_2$, we have $r_\e(z) > 0$, that is that
  \[
 5\chi (4) = 5 >  y_n - |z'|^2 -  \e^{-2} h(\e z', \e^{2}x_n).
\]
This holds for $\e$ small since $h(z',x_n)= o(|z'|^2 + x_n^2)$.

 We want to prove  that $\rho_\e$ defines a strictly pseudoconvex domain with $C^2$ boundary.

Clearly, $\rho_\e$ is a $C^2$ function. It then suffices to show that
$(a).$~$d \rho_\e (z) \ne 0$ for all $z\in \d D_\e$;
$(b).$~$\lambda:=\inf\frac{\d^2 \rho_\e}{\d z_i\d\overline z_j}(z) t_i t_{\overline j}>0$ where the infimum is taken for $z\in \d D_\e$,  $\sum_{j=1}^n t_j \d_j \rho_\e =0$ and $|t|=1$.

 Note that $-y_n+|z'|^2$ and $\chi(|z|^2)$ are plurisubharmonic. On $\pd D_\e\cap M_\e$, we have $\rho_\e=\hat \rho_\e$.  It is clear that there is a positive constant $c_0$ such that when $\e$ is small,
 $$
 \lambda_0:=\inf_{t\in T_xM_\e, |t|=1,x\in\pd D_\e\cap M_\e}\frac{\d^2 \hat \rho_\e}{\d z_i\d\overline z_j}(z) t_i t_{\overline j}>c_0.
 $$
 We can find a neighborhood $N$ of $M_\e\cap \pd D_\e$, independent of $\e$,
 such that
$$
 \lambda_1:=\inf_{t\in T_xM, |t|=1,x\in\pd D^\e\cap N}\frac{\d^2 \rho_\e}{\d z_i\d\overline z_j}(z) t_i t_{\overline j}>c_0/2.
 $$
On $D_\e\setminus N$, we have $-y_n+|z'|^2\leq -c_0'$, where $c_0$ is a positive constant, and hence
$$
\chi(|z|^2)>c_0'/2, \quad z\in \pd D_\e\setminus N.
$$
Note that $\chi(|z|^2)$ is strictly plurisubharmonic at $z$ when $\chi(|z|^2)>0$. Therefore,
$$
 \lambda_2:=\inf_{t\in T_xM_\e, |t|=1,x\in\pd D_\e\setminus N}\frac{\d^2 \rho_\e}{\d z_i\d\overline z_j}(z) t_i t_{\overline j}>c_1>0.
 $$
This shows that $\lambda>\min(\lambda_0,\lambda_1,\lambda_2)/2$ when $\e$ is small. Therefore, $D_\e$ is a $C^2$ strictly pseudoconvex domain.

It is obvious that $\lim_{\e\to 0}\norm{\rho_\e - \rho_0}_{B,2} = 0$.
Note that $r_0$ is a convex function. Thus $D_0$ is connected and the connectedness of $D_\e$ follows easily from the $C^2$ convergence.

$(ii)$ To find the new vector fields, we let $X^{(\e)}_{\overline\a}= \e(\var_\e)_* (X_{\overline\a})$, $ X^{(\e)}_{\overline n}= \e^{2}(\var_\e)_*(X_{\overline n})$. Then for $1\leq j,k \leq n-1$, we have
\ga
X^{(\e)}_{\overline j}= \d_{\overline j}'+ (A^{(\e)})_{\overline j}^k \d_{k}'+\e^{-1} (A^{(\e)})_{\overline j}^n \d_n',\\
X^{(\e)}_{\overline n}=  \d'_{\overline n}+ \e (A^{(\e)})_{\overline n}^k \d'_k + (A^{(\e)})_{\overline n}^n\d'_n
\end{gather}
where $\d_{\cdot}$ are vector fields associated to new coordinate and $(A^{(\e)})(z) \defeq A (\e z', \e^{2}z_n)$.

Since $A(z) = o(|z|)$, then $\znorm{ A^{(\e)}}_{D_\e,r'}\to0$ as $\epsilon\to0$ for any finite $r' \leq r$.
\end{proof}

\

Assuming that Theorem \ref{Global NN} holds, we are now ready to prove Theorem~\ref{Local NN}.

\medskip

{\it{Proof of \rta{Local NN}.}} \
According to Lemma \ref{Reformulation}, we may assume without loss of generality that 
$M\cup U = \{z\in U_0 : \rho(z)\leq 0\}$ where $\rho(z)= -y_n+ |z'|^2+h(z',x_n)$, $h=o(2)$  and $h\in C^2(V)$ on some neighborhood $V$ of the origin.

First, we let $5<r<\infty$ and apply Lemma \ref{InitialNorm} to $\{
U\cup M, \{X_{\overline\a}\}_{\a=1}^n\}$ with $\e$ to be determined. Then we obtain a $C^2$ strictly pseudoconvex domain $ D_\e \subset \var_\e(U\cup M)$ which shares part of the boundary with $M_\e$. Moreover, there exists a new basis $ \{X^{(\e)}_{\overline \a}\}_{\a=1}^n\in C^r(\ov{D_\e})$ for $\{ (\var_{\e})_*X^{(\e)}_{\overline 1}, \cdots,  (\var_{\e})_*X^{(\e)}_{\overline n}\}$ such that
$\znorm{ A^{(\e)}}_{D_\e,r}$ tends to $0$ as $\e\to 0$ and a limiting $C^2$ strictly pseudoconvex domain with defining function $\rho_0$ such that $\norm{\rho_\e - \rho_0}_{B_2,2}$ tends to $0$ as $\e\to 0$.

Fix $2<s<3$. According to Theorem \ref{Global NN}, there exists $\de_r(D_0)>0$ that is lower
stable under a small $C^2$ perturbation of $D_0$.
Therefore, we can find $\e$ sufficiently small such that 
\ga
 \znorm{ A^{(\e)}}_{D_\e,r}\leq 
 \de_r(D_0) / C(D_0), \\
\de_r(D_0) \leq  C
(D_0)\de_r(r_\e).
\label{firsteq}
\end{gather}
Here $\e$ is chosen for the $C(D_0)$ in \eqref{firsteq}.  Therefore, we have
$$
 \znorm{ A^{(\e)}}_{D_\e,r}\leq \de_r(r_\e).
$$

Consequently, we are able to apply Theorem \ref{Global NN} to $(D_\e,X^{(\e)}_{\overline\a})$ to obtain a $\Lambda^{r-1}(\ov {D_\e})$ diffeomorphism $F_\e :  D_\e\to \C^n$ onto its image that sends the almost complex structure to the standard one. Since $D_\e$ shares part of the boundary with $M_\e$, $F_\e$ induces a diffeomorphism near $0\in M_\e$ which sends the integrable almost complex structure to the standard one on one side of the domain.  The embedding $F$ is then given by $F_\e \circ \var_\e^{-1}$.

Finally, we consider the case $r=\infty$. Notice that merely  $\znorm{A}_{D,5}\leq \de_5(D_0)$ is required for the statement of Theorem \ref{Global NN} to be valid. Therefore, we do not need to control higher order derivatives of the error and the previous argument still applies. The proof of the Theorem \ref{Local NN} is complete.

\section{Preliminaries}\label{sec  Settings}
\setcounter{thm}{0}\setcounter{equation}{0}

In this section, we present some preliminaries for the proof of Theorem \ref{Global NN}. First, we recall some basic results for standard H\"older norms $\norm{\cdot}_{D,a}$, $0\leq a <\infty$  and H\"older-Zygmund norms $\znorm{\cdot}_{D,a}$, $0< a <\infty$ on domains $D\subset \R^n$ with cone property.  We then introduce three main tools used in the proof : the Stein extension operator, the Nash-Moser smoothing operator and the homotopy formula on strictly pseudoconvex domain with $C^2$ boundary in~\cite{MR3961327}. We also include necessary estimates for these operators for later use.

\subsection{Convexity of H\"older-Zygumund norms}
We say that a domain $D$ in $\R^n$ has the cone property if there exists some $C_* = C_*(D)>0$ such that the following hold.
\begin{enumerate}
\item Given two points $p_0, p_1$ in $D$, there exists a piecewise $C^1$ curve $\g(t)$ in $D$ such that $\g(0)= p_0$ and $\g(1)= p_1$, $|\g'(t)|\leq C_* |p_1-p_0|$ for all $t$ except finitely many values.
\item  For each point $x\in \ov{D}$, $D$ contains a cone $V$ with vertex $x$, opening $\th> C_*^{-1}$ and height $h> C_*^{-1}$.
\item  The diameter of $D$ is less than $C_*$.
\end{enumerate}

For a domain with cone property, the following H\"older estimates for interpolation, product rule and chain rule  are well-known. For instance, see the 
appendices of \cites{MR2829316, MR3961327} or \cite{MR0602181} for proofs and more details:
\ga
\norm{u}_{D,
 (1-\th) a + \th b}\leq C_{a,b}\norm{u}_{D, a}^{1-\th}\norm{u}^{\th}_{D,b},\quad\text{ $0\leq \th\leq 1$}, \label{convexity}\\
\norm{uv}_{D,a}\leq C_a (\norm{u}_{D,a}\norm{v}_{D,0}+\norm{u}_{D,0}\norm{v}_{D,a}),\label{product rule}\\
\norm{u\circ g}_{D,a}\leq C_a (\norm{u}_{\widetilde D,a}\norm{g}_{D,1}^a+\norm{u}_{\widetilde D,1}\norm{g}_{D,a}+\norm{u}_{\widetilde D,0}).\label{chain rule}
\end{gather}
If $(a,b)=\th(a_1,b_1)+(1-\th)(a_2,b_2), 0\leq \th\leq 1$, we have
\eq{holder_only}
\norm{u}_{D,a}\norm{v}_{D',b}\leq C_{a,b}(\norm{u}_{D,a_1}\norm{v}_{D',b_1}+\norm{u}_{D,a_2}\norm{v}_{D',b_2}).
\eeq

We now recall the definition of H\"older-Zygmund spaces  and basic properties.
 For $0< r\leq 1$,  the  H\"older-Zygmund space $\Lambda^r(\rr^n)$ is the set of functions $f\in L^\infty(\rr^n)$ such that
\eq{frrn}
|f|_{\rr^n,r}:=
|f|_{L^\infty(\rr^n)}+\sup_{y\in\rr^n}\f{|\Delta^2_yf|_{L^\infty(\rr^n)}}{|y|^r}.
\eeq
Here $\Del_yf(x):=f(x+y)-f(x)$ and thus $\Del^2_yf(x)=f(x+2y)+f(x)-2f(x+y)$.
When $r>1$, we define $\Lambda^r(\rr^n)$ to be the set of functions $f\in C^{[r]-1}(\rr^n)$ satisfying
$$
|f|_{\rr^n,r}:=|f|_{L^\infty(\rr^n)}+|\pd f|_{\rr^n,r-1}<\infty.
$$
 For a non-integer $r$,   $|\cdot|_{\rr^n,r}$ is equivalent to the H\"older norm $\|\cdot\|_{\rr^n;r}$;
 when $1<r<2$, $\znorm{\cdot}_{\rr^n,r}$ is also equivalent to the norm defined by \eqref{frrn}.
 See~\ci{MR0290095}*{Prop.~8, p.~146} and by the equivalence of the two norms one means
 $$
 c_r\|f\|_{\rr^n,r}\leq |f|_{\rr^n,r}\leq C_r \|f\|_{\rr^n,r}
 $$
 for two positive numbers $c_r,C_r$ depending only on $r$.
Clearly, we  have $c_r$ tends to $0$ when $r$ tends to positive integer and $C_r\leq2$.
 Let $F$ be a closed subset in $\rr^n$.  Let $r\in(0,\infty)$.
We write $f\in\Lambda^r(F)$ if there exists   $\widetilde f\in\Lambda^r(\rr^n)$ such that $\widetilde f|_F=f$. Define $|f|_{F,r}$  to be the infimum of  $|\widetilde f|_{\rr^n,r}$ for all such extensions $\widetilde f$.

Next, we recall the extension operator constructed by Stein \cite{MR0290095 }. Given a bounded domain $D\subset \R^n$ with Lipschitz boundary (i.e. the boundary is locally the graph of some Lipschitz function), there exists an extension operator
\eq{extension}
E   : \Lambda^{r}(\ov D)\to \Lambda^r(\R^n) \quad \text{with $\znorm{Ef}_{\rr^n,r}\leq C_r(D)\znorm{f}_{ \ov D,r}$}, \quad \forall r\in (0,\infty),
\eeq
where 
 the operator norm $C_r(D)$  depends only on the Lipschitz constant of the boundary.  In fact, Stein~\cite{MR0290095 } proved the above estimates for Sobolev spaces. The estimates $(\ref{extension})$ for Zygmund spaces can be found in~\cite{MR3961327}. We refer the reader to  \cites{MR3961327,MR0290095} for more details on the construction and the estimates.
When there is no confusion, we write $\znorm{\ \cdot\ }_{\ov D,r}$ as $\znorm{\ \cdot\ }_r$.

We now derive convexity in H\"older-Zygmund norms.
It is clear that the second term in \eqref{frrn} satisfies
$$
\sup_{y\in\rr^n}\f{|\Delta^2_yf|_{L^\infty(\rr^n)}}{|y|^{(1-\theta )r_0+\theta r_1}}\leq
\left(\sup_{y\in\rr^n}\f{|\Delta^2_yf|_{L^\infty(\rr^n)}}{|y|^{r_0}}\right)^{1-\theta}
\left(\sup_{y\in\rr^n}\f{|\Delta^2_yf|_{L^\infty(\rr^n)}}{|y|^{r_1}}\right)^{\theta}
$$
for $r_0,r_1\in(0,2)$ and $\theta\in(0,1)$. Then we get
\ga
\znorm{u}_{(1-\th)a +\th b}\leq C_{b-a}\znorm{u}_{ a}^{1-\th}\znorm{u}^{\th}_{b},\quad\text{ $0\leq \th\leq 1$}, \label{zconvexity-}
\end{gather}
for $0<b-a<2$ and $\theta\in(0,1)$. Hence, it also holds for all positive $a,b$; indeed, suppose it holds for $0<b-a<d$. Suppose $a<c<b$ with $d/2<b-a<3d/2$ and $c=(1-\theta )a+\theta b$. We take $e$ so that $0<\max (b-e,e-a)<d$. We may assume that $c<e$. Thus
$$
|u|_{c}\leq C|u|_{a}^{\frac{e-c}{e-a}}|u|_{e} ^{\frac{c-a}{e-a}}, \quad |u|_{e}\leq C|u|_{c}^{\frac{b-e}{b-c}}|u|_{b} ^{\frac{e-c}{b-c}}.
$$
Eliminating $|u|_e$ and solving for $|u|_c$, we get  \eqref{zconvexity-}.   By \eqref{extension} and \eqref{zconvexity-}, for $D$ we get
\ga
\znorm{u}_{D, (1-\th)a +\th b}\leq C_{a,b}(D)\znorm{u}_{D, a}^{1-\th}\znorm{u}^{\th}_{D,b},\qquad 0\leq \th\leq 1, \quad a,b>0. \label{zconvexity}
\end{gather}
To derive the H\"older-Zygmund version of the product and chain rules, we use an equivalent norm.

\pr{gs141} Let $0<r<\infty$.   Then $f\in\Lambda^r(\rr^n)$ if and only if there is a decomposition
$
f=\sum_{k\geq0} f_k
$
so that $f_k\in C^\infty(\rr^n)$ and
$$ 
 |\pd^i f_k|_{L^\infty(\rr^n)}\leq A2^{k(i-r)}, \quad i=0,\dots, [r]+1.
$$ 
Furthermore,  the smallest constant $A_r (f)$ of $A$ is equivalent with
$|f|_{\Lambda^r}$,
 i.e. $c_rA_r (f)\leq|f|_{\Lambda^r}\leq C_rA_r (f)$ for some positive numbers $c_r ,C_r$ independent of $f$.
\epr

For the proof, see \ci{MR0290095}. We now derive the following.

\le{zprod}
Let $D,\widetilde D$ be connected bounded domains with Lipschitz boundary and let $g$ maps $D$ into $\widetilde D$. Suppose that $\norm{g}_{D;1}<C$.  Then  we have
\al
\znorm{uv}_{D,a}&\leq C_a (\znorm{u}_{D,a}\norm{v}_{D,0}+\norm{u}_{D,0}\znorm{v}_{D,a}),\quad a>0;
\label{zproduct rule}
\\
|u\circ g|_{D,1}&\leq C(D)C(\widetilde D)|u|_{\widetilde D,1}(1+C_{1/\e}\norm{g}_{D,1+\e}^{\frac{1}{1+\e}});
\label{chain1}
\\
\label{zchain rule} 
\znorm{u\circ g}_{D,a}&\leq C_a (D)C_a(\widetilde D)(C_{1/\e}\znorm{u}_{\widetilde D,a}\norm{g}^{\frac{1}{1+\e}}_{D, 1+\e}\\
&
\qquad +\norm{u}_{\widetilde D,1}\znorm{g}_{D,a}+\norm{u}_{\widetilde D,0}), \quad a>1. 
\end{align}
Here $C_{1/\e}$ is a positive constant depending on $\e$ that tends to $\infty$ as $\e\to 0$.
\ele
\begin{proof} Note that  stronger inequalities for H\"older norms are given by \eqref{convexity}-\eqref{chain rule}.
We only need to verify the lemma when $a$ is an integer. Here we need a bit more 
for low order derivatives for $g$. We will also employ the Stein extension operator.

For the product rule, by Stein extension it suffices to consider the case $D=\rr^n$ while $u,v$ have compact support in a ball of fixed radius. By \rp{gs141}, we have $u=\sum u_k$, $v=\sum v_k$ with
$$
|\pd^\ell u_k(x)|\leq C_\ell|u|_a2^{-k(a-\ell )}, \quad |\pd^\ell v_k(x)|\leq C_\ell |v|_a2^{-k(a-\ell)}.
$$
We recall from the proof of \rp{gs141} the formula
$$
u_k(x)=\int\varphi_k(x-y)u(y)\, dy.
$$
Here $\var_k(x)=2^{mk}\var(2^kx)$
and $\var$ has compact support  in $\rr^m$ for $m=2n$. Thus
$$
|\pd_x^\ell u_k(x)|\leq C_\ell 2^{k\ell}\norm{u}_0.
$$
Decompose $uv=\sum w_k$ with $w_k=\sum_{i=0}^ku_iv_{k-i}$.
We bound $\sum_{i=0}^k |\pd_x^ju_i\pd_x^{\ell-j}v_{k-i}|$ above by
$$
 \sum_{i=0}^k2^{ij}\|u\|_0\znorm{v}_a2^{-(k-i)(a-(\ell-j))}= \|u\|_0|v|_a \sum_{i=0}^k2^{-k(a-\ell+j)-i(a-\ell)}.
$$
For $\ell<a$, the sum is bounded by $2^{-k(a-\ell+j)}\leq 2^{-k(a-\ell)}$.
For $\ell\geq a$ and $j>0$, the sum is bounded by
$$k\times 2^{-k(a-\ell+j)+k(\ell-a)}\leq k\times 2^{-kj}\leq C\leq C2^{-k(a-\ell)}.
$$
This gives us the desired estimate for $j>0$. By symmetry, for $j<\ell$ we get $$\sum_{i=0}^k|\pd_x^jv_{k-i}\pd_x^{\ell-j} u_i| \leq C_\ell 2^{-k(a-\ell)}\znorm{u}_a\norm{v}_0.
$$
We have verified \eqref{zproduct rule}.

We now verify \eqref{chain1}. Let $\widetilde u=E_{\widetilde D}u$ and let $\widetilde g=E_Dg$. Then
$$
|u|_{D,1}\leq |\widetilde u|_1\leq C(\widetilde D)|u|_{\widetilde D,1}, \quad |g|_{D;1+\e}\leq 2\norm{\widetilde g}_{1+\e}\leq
 C_\e(\widetilde D)|g|_{D,1+\e}.
$$
Thus $\widetilde u\circ\widetilde g$ is an extension of $u\circ g$. Let us drop all tildes in $\widetilde u,\widetilde g$.
We have
\aln
| u\circ  g(x+h)&+u\circ g(x-h)-2u\circ g(x)|\\
&\leq|u|_{1}|g(x+h)-g(x)|+
|u(g(x-h))-u(2g(x)-g(x+h))|\\
&\leq C_1\znorm{u}_{1}\norm{g}_{1}|h|+\norm{u}_{\frac{1}{1+\e}}(C_n\norm{g}_{1+\e}|h|^{1+\e})^{\frac{1}{1+\e}}
\\
&\leq C_1\znorm{u}_{1}\norm{g}_{1}|h|+C_nC_{1/\e}\znorm{u}_{1}
\norm{g}_{1+\e}^{\frac{1}{1+\e}}|h|.
\end{align*}
Here we have used
\ga
 g(x+h)+  g(x-h)-2  g(x)=h\cdot \int_0^1(\nabla   g(x+th)-\nabla   g(x-th)\, dt,
\\
\|g\|_{\a}\leq C_{1/\a}|g|_1, \quad 0<\a<1.
\end{gather}
Note that $C_{1/\a}$ is not bounded as $\a$ tends to $1^-$.
We have verified \eqref{chain1}.
  To verify  \eqref{zchain rule},  it remains to verify it for integer $a\geq2$. We  have
  \eq{pda1}
  \pd^{a-1}(u\circ g)=(\pd^{a-1}u)\circ g\pd g+\sum_{i=1}^{a-2}(\pd^iu)\circ g\pd^{a_1}g\cdots\pd ^{a_i}g,
  \eeq
  where $a_\ell\geq 1$ and $\sum a_l=a-2$.
   By \eqref{zproduct rule} with $a=1$ and \eqref{chain1}, we get
$$
|(\pd^{a-1}u)\circ g\pd g|_1\leq C(|u|_{a}(1+
C_{1/\e}\norm{g}_{D,1+\e}^{\frac{1}{1+\e}})+\norm{u}_{a-1}\znorm{g}_2)\leq
2C|u|_{a}(1+
C_{1/\e}\norm{g}_{D,1+\e}^{\frac{1}{1+\e}}).
$$

We now estimate the other terms  \eqref{pda1}. We remark that we do not
 have \eqref{holder_only} for Zygmund norms.
However, we will take advantage of  $1\leq i\leq a-2$.   By \eqref{chain rule}, \eqref{product rule}, and \eqref{holder_only} for H\"older norms, we get
\aln
&\norm{(\pd^iu)\circ g\pd^{a_1}g\cdots\pd ^{a_i}g}_{1}\leq \norm{(\pd^iu)\circ g}_1\norm{g}_{a_1}\cdots\norm{g}_{a_i}\\
&\qquad +\norm { u}_i\sum_{j=1}^i\norm{g}_{a_1}\cdots\norm{g}_{a_{j}+1}\cdots\norm{g}_{a_i}\\
&\qquad\leq C\left\{\norm{u}_{i+1+(a_1+\cdots+a_i-i-\e))} \norm{g}_{1+\e}+C
\norm{u}_1\norm{g}_{
a-1+\e}\right\},
\end{align*}
which gives us \eqref{zchain rule}.
\end{proof}
We need the following more general chain rule estimate. The proof can be found in the appendix  of ~\cite{MR3961327} for H\"older norms. The similar estimate can be obtained analogously by using the above chair rule, product rule
for the Zygmund spaces. We left the details to the reader.
\le{chain_rule}
Let $D_m$ be a sequence of Lipschitz domains in $\R^d$ 
of which $C_*(D_m)$ are bounded. Let $F_i = I + f_i$ map $D_i$ into $D_{i+1}$ with $\norm{f_i}_1\leq C_0$.
Then 
\ga\label{Chain_rule}
\norm{u\circ F_m\circ\cdots\circ F_1}_{D_0,r}\leq {C_r^m}\left\{\norm{u}_{r}+ \sum_i \norm{u}_1\norm{f_i}_r +\norm{u}_r\norm{f_i}_1        \right\},\  r\geq0;\\
\znorm{u\circ F_m\circ\cdots\circ F_1}_{D_0,r}\leq {C_r^m}\left\{\znorm{u}_{r}+ \sum_i \norm{u}_1\znorm{f_i}_r +C_{1/\e}\znorm{u}_r\norm{f_i}_{1 +\e} ^{\frac{1}{1+\e}}      \right\}, \  r>1.\nonumber
\end{gather}
\ele

We also need to extend an inverse mapping estimate in Webster \cite{MR999729} to the Zygmund spaces. Note that
$$
(\pd_x^ag)\circ F=\sum Q_\a(\pd f)\pd^{\a_1} f\cdots\pd^{\a_i}f,
$$
where $i\geq1, \a_j\geq1$,  $\a_1+\cdots+\a_i\leq a$, and $Q_\a(\pd f)$ are rational functions in $\pd f$ with $\norm {Q_\a(\pd f)}_{B_r,0}<C$.
\le{Webster}
Let $F= I+ f$ be a $C^1$ map from $B_r \defeq \{x\in \R^n :  \norm{x}\leq r\}\subset \R^n$ into $\R^n$ with\[
f(0)=0, \quad \norm{Df}_{B_r,0} \leq \th <\frac{1}{2}.
\]
Let $r'=(1-\theta)r$. Then the range of $F$ contains $B_{r'}$ and there exists a $C^1$ inverse $G= I+g$, which maps $B_{r'}$ injectively into $B_r$, with\[
g(0)=0, \quad \norm{Dg}_{B_{r'},0}\leq 2 \norm{Df}_{B_r,0}.
\]
 Assume further that
$f\in \Lambda^{a+1}(B_r)$. 
Then $g\in \Lambda^{a+1}(B_{r'})$ and
\ga
\norm{Dg}_{B_{r'}, a}\leq C_{a}\norm{Df}_{B_r,a},\quad a\geq0;\\
\znorm{Dg}_{B_{r'}, a}\leq C_{a}\znorm{Df}_{B_r,a}(1+C_{1/\e}\norm{f}_{1+\e}^{\frac{1}{1+\e}}),\quad a>1.
\end{gather}
\ele
In applications, the $r$ in the lemma will be bounded between two absolute constants. Thus the constants $C_a$ does not depend on $r, r'$. In fact, for convenience we will   drop the requirement that $f(0)=0$ replacing with condition that $f$ has compact support in $B_r$. This allows us to take $r'=r$, too.

\subsection{Estimates on the commutator}

For our application, we need to consider the commutator $[\dbar, E] \defeq \dbar E - E\dbar$.

\pr{commutator} Let $D$ be a bounded  $C^1$ domain in $\R^n$ and  $E$ be the Stein extension operator for $D$. Moreover, let $U = D + \eta\cdot \vec{N} $ where $\vec{N}$ is the outer unit normal vector of $D$ and
$
0<\eta< 1.
$ Then 
we have the following estimates
\ga
\norm{[\dbar, E]u}_{U,a}\leq  C_b \eta^{b-a-1}\norm{u}_{D, b},\quad b-1\geq a\geq1;\\
\znorm{[\dbar, E]u}_{U,a}\leq  C_b \eta^{b-a-1}\znorm{u}_{D, b}, \qquad b-1\geq a>1.
\end{gather}
\epr
\begin{proof} 
First, let $k,l$ be integers such that $0\leq k\leq l$.
Notice that for any function $f \in C^l(U)$ that vanishes on $D$, we have the point-wise estimate
for the $k$-th derivatives
$$
|f^{(k)}(x)|\leq C_l\dist(x, \d D)^{l-k}\norm{f}_{U,l},\quad \forall x\in U\backslash \ov D.
$$
Indeed, fix any  $x\in U\backslash \ov D$. We may assume without loss of generality that $0 \in \d D$ and $|x| = \dist(0, x) = \dist(x, \d D)$.  Let $\g (t) = t x$, $0\leq t \leq 1$ be the line segment that connects $0 ,x$. Let $N = l-k$. Then
by the fundamental theorem of calculus,
\eq{Nfund}
f^{(k)}(x) = \int_{0}^1\frac{d}{d t_1}\cdots \int_0^1\frac{d}{d t_{N}}f^{(k)}(t_N\cdots t_1 x) d t_{N}\cdots dt_1.
\eeq
Consequently,
$$
|f^{(k)}(x)| \leq C_N|x|^{N}\norm{f^{k+N}}_{U,0} \leq C_N\dist(x, \d D)^{l-k} \norm{f}_{U, l}.
$$
In particular,
$$
\norm{f}_{U,k} \leq C_{\ell-k}\eta^{\ell-k}\norm{f}_{U,\ell}.
$$
Now let $0\leq\a,\b<1$ and $k+\a\leq l+\b$. We claim that
\eq{ftc}
\norm{f}_{U,k,\a}\leq C_{l-k}\eta^{l-k+\b-\a}\norm{f}_{U,l,\b}.
\eeq
Indeed, assume first that $k=l$ and $\a\leq \b$. Then for any $|x-y|\leq \eta$, we have
\eq{fkxf}
\frac{|f^{(k)}(x)-f^{(k)}(y)|}{|x-y|^\a}\leq \norm{f}_{U,l,\b}|x-y|^{\beta-\a}
\leq \norm{f}_{U,l,\beta}\eta^{\b-\a}.
\eeq
If $|y-x|>c\eta$, we use $f^{(k)}|_{\ov D}=0$. We find $y^*\in\pd D$ such that $|y^*-y|=\dist(y,D)$. Then we get \eqref{fkxf} from
$$\frac{|f^{(k)}(x)-f^{(k)}(y)|}{|x-y|^\a}\leq C\eta^{-\a}(|f^{(k)}(x)|+|f^{(k)}(y)|)\leq C\eta^{-\a}\norm{f}_{\ell+\b}(|x|^\b+|y-y^*|^\beta).$$
Assume now that $l>k$. Note that any two points $x,y$ in $U$ can be connected by a smooth curve of length at most $C(D)|x-y|$. Putting the above together,
 we obtain
\al\label{fkxf+}
\frac{|f^{(k)}(x)-f^{(k)}(y)|}{|x-y|^\a}&\leq C(D) \norm{f}_{U,k+1}|x-y|^{1-\a}\leq C_{l-k}C(D)\norm{f}_{U,l}\eta^{l-k-\a}\\
&\leq C_{l-k}C(D)\norm{f}_{U,l+\b}\eta^{l-k-\a+\b}.
\nonumber
\end{align}

Finally, we apply $(\ref{ftc})$ to $f=[\dbar, E]u$ with $k+\a=a$, $l+\b=b-1$. Notice also that $\norm{[\dbar , E]u}_{U,b-1}\leq C_b \norm{u}_{D,b}$ where $C_b$ is the operator norm for $E : C^b (D)\to C^b(\R^n)$. Then
\eq{}
\norm{[\dbar, E]u}_{U,a}\leq  C_{b-a} C(D)\eta^{b-a-1}\norm{u}_{D, b}.
\eeq

\medskip
We now consider the Zygmund space case. Here we use interpolation on operator norms. Let $E_{U}$ be a Stein extension for functions on $U$.   We have
$$
E_{U}[\dbar,E_D]u=
E_{U}\dbar E_Du-E_UD\dbar u.
$$
Thus, we can write the H\"older estimates as
$$
\norm{E_U[\dbar,E_D]u}_{\C^n, a}\leq C\eta^{b-a-1}\norm{u}_b, \quad\forall u\in\Lambda^b(\C^n).
$$
We remark that the inequality is trivial when $b=a+1$. Thus we assume that $b>a+1$. We also have $a>0$.
We take non-integers $a_i, b_i$ satisfying  $b_0<b<b_1,0<a_0<a<a_1$, and $b_i>a_i+1$. Furthermore
$$
a=(a_0+a_1)/2, \quad b=(b_0+b_1)/2.
$$
We have $\znorm{[\dbar,E]u}_{a_i}\leq C_i\eta^{b_i-a_i-1}|u|_{b_i}$.
Since $u\to E_U[\dbar,E]u$ is a linear operator, we get via interpolation (see for instance~\cite{MR3961327})
$$
|[\dbar, E]u|_a\leq |E_U[\dbar, E]\widetilde u|_a  \leq C (C_0\eta^{b_0-a_0-1})^{1/2}(C_1\eta^{b_1-a_1-1})^{1/2}\znorm{\widetilde u}_b
$$
for any $\widetilde u\in\Lambda^b$ with $\widetilde u|_D=u$. This gives us the last inequality.
The proposition is proven.
\end{proof}

Let $U_0=D_0+t_0\cdot \vec{N}$ where $\vec{N}$ is the unit outer normal vector of the boundary. Fix $L\in \N$.  Moser constructed in \cite{MR0147741}  a smoothing operator $S_t: C^0(U_0)\to C^\infty(D_0)$,
\eq{smoothing_op}
S_t u(x) = \int_{|y|<1} u(y)\chi_t(x-y)dy ,\quad x\in D_0, \quad 0<t<t_0/ C,
\eeq
where $\int \chi(z) dz = 1$, $\chi_t(z) = \chi(z/t)$ and
\eq{orthog}
\int z^I \chi(z) dz = 0,
\quad 0<|I|\leq L.
\eeq
 Therefore, $S_t$ is a convolutional operator and
  for
 $0< t <
t_0/C$, we have
\ga\label{smoothing}
\norm{S_t u}_{D_0, a}\leq C_a t^{b-a}\norm{u}_{U_0,b},\quad 0\leq b\leq a <\infty;\\
\norm{(I-S_t)u}_{D_0, a}\leq C_b t^{b-a}\norm{u}_{U_0,b},\quad 0\leq a, \  0\leq b-a<L.
\end{gather}
Here the last inequality relies on \eqref{orthog}.
Via interpolation as explained in the proof of \rp{commutator} and applied to linear operator $S_t$ and $I-S_t$, we get from the above two inequalities the following for Zygmund norms:
\ga\label{Zsmoothing}
\znorm{S_t u}_{D_0, a}\leq C_a t^{b-a}\znorm{u}_{U_0,b},\quad 0< b\leq a <\infty; \\  \label{Zsmoothing1}
\znorm{(I-S_t)u}_{D_0, a}\leq C_
{
b
} t^{b-a}\znorm{u}_{U_0,b},   \quad a>0, \  0\leq b-a<L.
\end{gather}
 
\subsection{Stability of constants}
Finally, we recall the homotopy operators on $C^2$ strictly pseudoconvex domain constructed in \cite{MR3961327}. Let $D_0$ be a $C^2$ strictly pseudoconvex domain in $\C^n$ and $\U \subset \C^n$ be some open neighborhood of $\ov{D_0}$.   Then for any $\var\in \Lambda^r_{(0,1)}(\overline{D_0})$ with $r>1$, we have the homotopy formula
\eq{htf}
\var= \dbar P_{D_0,\U}\var+Q_{D_0,\U}\dbar \var, \quad \text{on $D_0$}
\eeq
where\[
P_{D_0,\U}\var(z)= \int_{\zeta\in\U} \Om^{0}_{0,0}(\zeta,z)\we E_{D_0}\var(\zeta) + \int_{\zeta\in\U\backslash D_0} \Om^{01}_{0,0}(\zeta,z)\we [\dbar, E_{D_0}]\var(\zeta).
\]
Here $\Om^{0}_{0,0}(\zeta,z), \Om^{01}_{0,0}(\zeta,z)$ are forms
 of types $(0,0)$ and $(0,1)$ in $z$, respectively. Moreover, we have the following estimate
\eq{1/2 estimate}
\znorm{P_{D_0, \U}\var}_{D_0,r+1/2}\leq C_r(
D_0)\th_0^{-r-\mu}\znorm{\var}_{D_0,r}, \quad r > 1
\eeq
where $\th_0 = \dist(D_0, 
\d
\U)$, and
 $\mu$ is some constant depending only on the dimension and $C(D_0) > 0$ is another constant depending on $C^2$ norm of the defining function.  Similar formula and estimates hold with $Q$ in place of $P$ and $\var$ a $(0,2)$ form.  We refer to  \cite{MR3961327} for more details on these operators and the estimates.

In our application,
we will also apply estimates on $P, Q$  to a sequence of domains $D_j$ such that $\dist(D_j,\pd \U)$ are bounded below by a fixed positive
number depending on initial domain $D_0$.
 Consequently, we can absorb $\th_0^{-s-\mu}$ into the coefficient in the estimate. We shall also drop the subscript in $D_0$ for simplicity if no confusion would be caused.
We remark that the constant $C_s$ in \eqref{1/2 estimate} depends on $s$ and it may not be bounded as $s$ tends some special values such as a positive integer.

We make a remark about the stability of a constant under $C^2$ perturbation of the domains.
\rem{\label{stremark}
Let $D_0 \defeq \{x \in \U: \rho_0(x)<0\} \subset \U\subset \R^n$ be a domain with $C^2$ boundary where $\U$ is some open neighborhood of $D_0$ and $\rho_0$ is a 
(standard)
 $C^2$   defining function. Let
\[
\mathcal{D}_{\e_0} = \{\rho\in C^2(\U): \norm{\rho - \rho_0}_{\U,2}\leq \e_0\}.
\] Here $\e_0>0$ is sufficiently small such that for all $\rho\in \mathcal{D}_{\e_0}$, we have  $d\rho(x)\ne 0 $ on $\{x\in \U: \rho(x) = 0\}$.

Suppose that there is a function
\[
\mathcal{C} : D_{\e_0} \to (0,\infty).
\]
We say $\mathcal{C}$ is {\emph{upper stable}} (resp. \emph{lower stable}) under small $C^2$ perturbation of $\rho_0$ if there exists 
$\e(D_0)>0$
 and a 
 constant $C_0(D_0)>
1$ possibly dependent of $\rho_0$, such that
\eq{stability}
\mathcal{C}(\rho) \leq C_0
( 
D_0)
\mathcal{C}(
\rho_0) \quad (\text{resp.}\ \ \mathcal{C}(\rho_0) \leq C_0(D_0)\mathcal{C}(\rho))
\eeq
for all
$\rho$ satisfying
$\norm{\rho - \rho_0}_{\U,2}\leq \e(D_0)$.

The following are examples of upper stable mappings that will be used for our purposes.
\begin{enumerate}
\item
Recall that we introduce {\em standard} defining functions for $C^k$ domains with $k\geq1$ in Section~\ref{Local_NN}. By being standard, we mean that  the defining functions depend only on the domains in construction. 
We will write $\mathcal C(\rho_0)$ when  $\rho_0$ is a standard definition function of $D_0$. 
There are other ways to construct standard definition functions.  For instance, we can replace $\rho_0$ by a Whitney extension of $\rho_0|_{\ov D_0}$ so that $\rho_0\in C^\infty$ away from $\ov D_0$.
There are other ways to construct the definition functions. For instance the Stein extension 
 can also be used. 
\item The operator norms of Stein extension operator between $\Lambda^r(D_0)\to \Lambda^r(D_0)$ for some $r>1$ is stable under small $C^2$ perturbation of the domain $D_0$.  Indeed, it is well-known that the operator norm $C_r(D_0)$ only depends on the Lipschitz constant of $D_0$. Thus $C_r(\widetilde D)<C_0C_r(D_0)$ when $\d \widetilde D$ has a $C^1$ defining function $\widetilde\rho$ with 
    $\|\widetilde\rho-\rho_0\|_1<\e(D_0)$ sufficiently small for some constant $C_0(D_0, \e)$.

\item The constants in  the estimates  $(\ref{convexity}), (\ref{product rule}), (\ref{chain rule}), (\ref{zconvexity}), (\ref{zproduct rule}), (\ref{zchain rule})$ and 
\rl{chain_rule}
are also stable under small $C^2$ perturbation of $D$ provided that $D$ is a $C^1$ domain.
This should follow in principle the proofs of these inequalities. Alternatively, it also follows from the above remark.  Indeed, let $\widetilde D = \{z\in \U: \widetilde\rho(z)< 0\}$ where $\norm{\widetilde\rho - \rho}_{\U,2}\leq \e$ for some the some  $\e(\rho) > 0$. Let $E_{\widetilde D}$ be the Stein extension operator on $\widetilde D$. Then we have\[
\begin{aligned}
\norm{u}_{\widetilde D, l}\leq \norm{E_{\widetilde D} u}_{\U, l}&\leq C_{a,b}(\U) \norm{E_{\widetilde D} u}^\th_{\U,a}\norm{E_{\widetilde D}u}^{1-\th}_{\U,b}\\
&\leq C_{a,b}(\U)C'_a(\widetilde D)C^{''}_b(\widetilde D) \norm{u}^\th_{\widetilde D,a}\norm{u}^{1-\th}_{\widetilde D,b}
\end{aligned}
\]
for $l=\theta a+(1-\theta)b$.
Consequently, the stability of $$C_{a,b}(\widetilde D) = C_{a,b}(\U)C'_a(\widetilde D)C^{''}_b(\widetilde D)$$ follows from the stability of $C_a'(\widetilde D), C^{''}_b(\widetilde D)$ of Stein extension operator.  The proofs for $(\ref{product rule}), (\ref{chain rule}), (\ref{zconvexity})$, $(\ref{zproduct rule})$ and $(\ref{zchain rule})$ are similar. For the coefficients in Lemma $\ref{chain_rule}$, we simply notice they are finite products of the constants from $(\ref{convexity}), (\ref{product rule}), (\ref{chain rule})$ and some dimensional constants. Consequently, they are also stable under small $C^2$ perturbation.

\item It is easy to see that the operator norms for Nash-Moser smoothing operator \eqref{smoothing_op} are stable under small $C^2$ perturbation of the domain $D_0$.
\item The operator norms for $(\ref{htf})$ in (\ref{1/2 estimate}) are stable under small $C^2$ perturbation of the domain $D_0$. This was  proved
      in~\cite{MR3961327}*{Theorem $5.2$}.
\end{enumerate}

The
 upper 
 stability of these constants is important  for the convergence of our iteration process in Section~\ref{sec iteration} and also for the proof of the {\it lower}
  stability of $\de_r(D_0)$. Notice that the latter condition already played an important role in the proof of
Theorem~\ref{Local NN}.
}

\section{Approximate solution via homotopy formula }\label{approximation_of_embedding}

\setcounter{thm}{0}\setcounter{equation}{0}

Let $D_0$ be a strictly pseudoconvex domain in $\C^n$ with $C^2$ boundary. Given the initial integrable almost complex structure $X_{\overline\a}=\d_{\overline\a}+A_{\overline\a}^\b \d_\b$ on $\overline D_0$, we wish to find a transformation defined on $\ov D_0$ to transform the complex structure into a new complex structure closer to the standard complex structure while $\ov D_0$ is transformed to a new domain that is still $C^2$ strictly pseudoconvex.

According to Lemma \ref{elem_pp}, after a perturbation of the form $F= I+ f$ with $Df$ small, the new structure $\{\d_{\ov\a}+\hat A_{\ov\a}^\b\d_\b\}$ has the matrix form
\eq{}
\hat A \circ F = (I+ \ov{\d f}+A\dbar f)^{-1}(A+\dbar f + A\d f).
\eeq

We first formally  decide the correction $f$ following Webster \cites{MR999729}. Then we indicate the obstructions and make necessary modifications.

From now on, we shall regard $A_{\overline\a}^\b$ as the coefficients of  $(0,1)$ forms by simply identifying $A_{\overline\a}^\b\d_\b$ with $A^\beta:=A_{\overline\a}^\b dz^{\overline\a}$, where $\b=1,\dots, n$. 
 We can then apply the  homotopy formula \eqref{htf} componentwise to $A:=(A^1,\dots, A^n)$ and write
 $$A = \dbar P A+ Q\dbar A.
 $$
 For  Newton's method, we would take $f= -P A$. Then
\eq{}
A+\dbar f +A\d f= \dbar P A + Q\dbar A - \dbar PA +A\d f = Q\dbar A +A\d f.
\eeq
Using the integrability condition $\dbar A = [A, \d A]$ and product rule (\ref{zproduct rule}), formally we would have $\znorm{A'}\leq \znorm{A}^2$. This is the correction used in Webster's proof of the classical Newlander-Nirenberg theorem \cite{MR999729}. However, similar to Webster \cite{MR1128608}, Gong-Webster \cite{MR2868966} and Gong \cite{MR3961327}, the homotopy operator $P$ does not gain the full derivative lost in applying $\dbar$ to $A$. Therefore, we need to apply a smoothing operator to $-PA$ so that the iteration does not terminate within finitely many steps. Note also that the transformation $F$ must be defined on $\ov{D_0}$. Consequently, we need to use Nash-Moser smoothing method in a way different from the above mentioned work. Namely, we  first extend $PA$ to a larger domain via Stein extension operator  and then apply the smoothing operator $S_t$. This ensures that the new complex structure is defined on the closure $\ov D_1 = F(\ov D_0)$ where the new structure is still formally integrable
 and has the same regularity as the original complex structure. Therefore, we modify $f=-PA$ and take
\[
f=-S_tE_{D_0}P_{D_0,U_1}A.
\]
Here we assume that 
\eq{distDU}
\dist(D_0,\pd B_0)\geq c_0^*
\eeq
and via a cut-off function, we assume that $E_{D_0}u$ has compact support in
 $$
 B_0=\{z\in\C^n,\quad |z|<\sigma_0\}.
 $$
   Note that $f$ still have compact support, provided
\eq{c0ss}
t<c_0^{**}.
\eeq
Consequently, we have the following identity on $D_0$:
\begin{align}
A+\dbar f + A \d f&=A- \dbar S_t  EPA +A\d f
\\&= A- S_t\dbar  E P A+[S_t, \dbar]EPA+A\d f
\nonumber \\&= A- S_t E\dbar PA +S_t [E, \dbar] PA + A\d f
\nonumber\\&= A- S_t E A  + S_t E Q\dbar A +S_t [E, \dbar] PA + A\d f
\nonumber\\&= (I-S_t)EA + S_t E Q\dbar A +S_t [E, \dbar] PA + A\d f,
\nonumber\end{align}
where in the third equality, we use $[S_t, \dbar]=0$ on $D_0$ when acting on $C^1(U_0)$.

According to the above computation and Lemma \ref{elem_pp}, our new error $\hat A$ satisfies
\eq{five_term}
\hat A\circ F = (I+\overline{\d f}+ A\dbar f)^{-1} \left\{(I-S_t)EA + S_t E Q\dbar A +S_t [E, \dbar] PA + A\d f\right\}.
\eeq
We shall denote
\ga{}\label{five_term1}
I_1 = (I-S_t)EA, \quad I_2= S_tEQ\dbar A, \\
\label{five_term2}
I_3= -S_t[\dbar,E]PA,\quad I_4=A\d f, \quad
I_5 = \overline{\d f}+A\d f, \\ \label{five_term3}
\widetilde A = (I+I_5)^{-1}(I_1+I_2+ I _3 +I_4).
\end{gather}
 Then we have
 \eq{}
 \hat A\circ F= \widetilde A, \ \text{on $\ov D_0$}; \quad
 \hat A = \widetilde A\circ G, \ \text{on $\ov D_1$.}
  \eeq
 Here $D_1:=F(D_0)$ and  $G = F^{-1}$ maps $D_1$ onto $D_0$.

Before proceeding, let us briefly discuss the plan for proving Theorem \ref{Global NN}.  In Section \ref{sec  correction estimate}, we estimate lower order norm $\znorm{f}_{D_0,s}$ for some $s>2$ of the transformation $F=I+f$ and give a rough estimate of   the new complex structure $\hat A$ on the closure of the new domain $D_1$. In Section~\ref{four term estimate}, we refine our estimates on the lower and high order norms $|\widetilde A|_{D_0, s}, |\widetilde A|_{D_0, r}$ by estimating $I_1,\ldots, I_5$.
  In Section~\ref{sec  iteration}, we describe the iteration scheme and verify the induction hypotheses. We shall obtain uniform control for the gradient, second order derivatives of $f$, and the Levi form of the defining function for iterated domains.  In Section \ref{convergence}, we run the iteration and determine all the parameters in order to achieve optimal regularity result. Finally, we show the convergence of the composition of a sequence of transformations on $\ov D_0$ in $\Lambda^k$ norm for suitable $k$.

\section{Change of coordinates and new complex structure}\label{sec correction estimate}
\setcounter{thm}{0}\setcounter{equation}{0}

Let $A\in \Lambda^r(\ov{D_0})$ be the error term in the original almost complex structure where $1 < r < \infty$.
Let $\frac{3}{2} < m \leq r+ \frac{1}{2}$ and $1 < \ell \leq r$.

We start by deriving the following two estimates for $f=F-I$: 
\al\label{correctionest}
\znorm{f}_{D_0, m}&=
\znorm{S_t EPA}_{D_0,m}\leq C'_m \znorm{EPA}_{U_0,m}\leq C'_mC''_m \znorm{PA}_{D_0,m}\\
&\nonumber\leq C'_mC''_mC_m''' \znorm{A}_{D_0, m-1/2},
 \\
\znorm{f}_{D_0, \ell+1}& =
\znorm{S_t EPA}_{D_0, \ell+1}\leq C'_{\ell+1}t^{-\frac{1}{2}}\znorm{EPA}_{U_0, \ell+1/2}\label{correctionest1}\\
&\nonumber\leq C'_{\ell+1}C''_{\ell+1}C'''_{\ell+\frac{1}{2}}t^{-\frac{1}{2}}\znorm{A}_{D_0,l}, \quad
\end{align}
where $C'_\bullet$ is the constant from the Nash-Moser smoothing operator (\ref{smoothing}) which is independent of the domain $D_0$, $C''_{\bullet}$ is the constant from Stein's extension operator (\ref{extension}) and $C'''_{\bullet}$ is the constant from  estimate (\ref{1/2 estimate}). Constants in the second estimate have similar meaning.

Let us first describe how we control the norms in iteration. Let $3/2<s<3$.  We need to get rapid convergence in low order derivatives of $f$. This will be measured by the $s$-norn $\znorm{A_0}_{D_0,s}$.  There are two  estimates \eqref{correctionest} and
\eqref{correctionest1} which are available  to control the lower order derivatives of $f$. We will use \eqref{correctionest} to control the second-order derivatives of $f$ and thus the Levi-forms of the domains in iteration. We will use \eqref{correctionest1} to control the $(s+1)$-norm of $f$.  Let $\s_0 > 0$  be any number large enough such that 
$$
\ov{D_0} \subset \mathcal{U} \subset B_0,
$$
where $\U$ is an open neighborhood of $\ov{D_0}$ and $B_0 = \{z\in\C^n: |z| < \s_0\}$. We shall still denote by $f$ the extension $Ef$ to $B_0$ where $E$ is the Stein extension operator. We may assume that $Ef$ has compact support in $B_0$.

To simplify our notation,  we denote by $C_m(D_0)$ finite products of {\emph{upper stable}} constants.  Notice that by definition of upper stability, finite products of upper stable constants is still upper stable. We will then use $C^*_m(D_0)$ to indicate in the context when these constants are fixed for the rest of the paper. 

According to \eqref{correctionest} and
\eqref{correctionest1}, we have
\begin{gather}
\label{initial_est}
\norm{f}_{B_0, 2}\leq C^*_2\znorm{A}_{D_0,s},\\ 
\label{initial_est1}
\znorm{f}_{B_0, s+1}\leq
C^{*}_{s} t^{-1/2}\znorm{A}_{D_0,s}.
\end{gather}
Let us first  assume that 
\eq{ini_condition}
 |A|_{D_0, s} < \f{
1}{C_s^{**}}, \quad C_s^{**}>
\sqrt{8n}C_2^*.
\eeq

Here we have fixed $C_2^*, C_{s}^*$ and we will adjust the constant $C_{s}^{**}$ a few times, which will be indicated sometime for clarity. By \eqref{initial_est} and \eqref{ini_condition}, we have
\eq{fnorm2} 
\norm{Df}_{B_0, 0} \leq \sqrt{2n}\norm{f}_{B_0,2}\leq\f{\sqrt{2n} C_2^*}{C_s^{**}} 
< \frac{1}{2}.
\eeq
Thus \rl{Webster} gives us for $F=I+f$. 

Recall that $f$ has compact support in $B_0$. Therefore $F$ is a diffeomorphism from $B_0$ onto itself.  Let $F^{-1} = G$ be its inverse mapping defined on $B_0$. 
The estimate \eqref{fnorm2} also ensures that the constants in Stein extension for $F(D_0)$ and convexity of norms are equivalent to the constants for $D_0$. However, in next section we will impose a stronger condition ensuring $F(D_0)$ remains strictly pseudoconvex.

From \eqref{five_term}-\eqref{five_term3}, we have
\begin{equation}
\hat A\circ F = (I+I_5)^{-1}(I_1+I_2+I_3+I_4)=:\widetilde A
\end{equation}
where $I_1 = (I-S_t)EA$, $I_2= S_tEQ\dbar A$, $I_3= -S_t[\dbar,E]PA$, $I_4=A\d f$ and $I_5 = \overline{\d f}+A\d f$.

We wish to estimate $\znorm{\hat A}_{D_1, \ell}$ in terms of $\Lambda^s$ and $\Lambda^r$ norms of $f, A$. We do this by first applying the chain rule to $\hat A = \widetilde A\circ G$  and reduce the problem to 
estimating $\znorm{\widetilde A}_{D_0,\ell}$. Then we use the convexity of H\"older norms to further reduce the problem to estimating $\znorm{\widetilde A}_{D_0,r}, \znorm{\widetilde A}_{D_0,s}$.

According to \eqref{initial_est}, \eqref{initial_est1}, \eqref{fnorm2} 
and Lemma~\ref{Webster}, 
we have
\ga\label{estgs}
\norm{g}_{B_{0},2}\leq C_2 \norm{f}_{B_0,2}\leq C_2 \znorm{A}_{D_0,s}\\
\label{estgss}
\znorm{g}_{
B_0, 
s+1}\leq C_s \znorm{f}_{B_0, s+1}\leq
C_{s} t^{-1/2}\znorm{A}_{D_0,s}.
\end{gather}

By 
\eqref{correctionest},  \eqref{fnorm2}, and Lemma~\ref{Webster},   we obtain
\eq{estgs+}
\znorm{g}_{D_{1},
m}\leq C_m\znorm{f}_{D_0,m}\leq C_m\znorm{A}_{D_0,m-1/2}.
\end{equation}

Let $D_1 = F(D_0)$. We apply chain rule estimate (\ref{zchain rule}) to $\hat A$ on $D_1$  together with \eqref{estgs} and \eqref{ini_condition}  to obtain
\begin{align}
|\hat A|_{D_1,m}=|\widetilde A\circ G|_{D_1, m}&\leq C_m(|\widetilde A|_{D_0,m}+ 
\|\widetilde A|_{D_0,1}\cdot \znorm{g}_{D_1,m}).
\end{align}
Using 
\eqref{estgs+}, we have
\begin{equation}\label{Error}
|\hat A|_{D_1,
m}\leq C_m |\widetilde A|_{D_0,m}.
\end{equation}
Therefore, it suffices to estimate $\widetilde A$.

According to the convexity of H\"older-Zygmund norms (\ref{zconvexity}) with $a=s, b=r$ and $\ell = (1-\th) s +\th r$ for some $0\leq \th\leq 1$, we have the following estimates for intermediate derivatives,
\eq{}
|\widetilde A|_{D_0, \ell}\leq 
C_{r,s,\ell} |\widetilde A|^{1-\th}_{D_0, s}|\widetilde A|^\th_{D_0, r}.
\eeq
Consequently, we can reduce the problem of estimating the intermediate derivatives to estimating $|\widetilde A|_{D_0, s}$ and $|\widetilde A|_{D_0,r}$, which we shall often refer to as low and high order derivative estimates.

To this end, we apply product rule (\ref{zproduct rule}) to $m$-norm of $\widetilde A = \hat A \circ F$. We get
\begin{align}
|\widetilde A|_{D_0,m}&\leq C_m\cdot\left ( \znorm{(I+I_5)^{-1}}_{D_0, m}\cdot \sum_{i=1}^4 \norm{I_i}_{D_0,0}+\norm{(I+I_5)^{-1}}_{D_0,0}\cdot \sum_{i=1}^4\znorm{I_i}_{D_0,m}\right ) \\ \nonumber
& \leq C_m\cdot\left ( \znorm{(I+I_5)^{-1}}_{D_0, m}\cdot \sum_{i=1}^4 \znorm{I_i}_{D_0,s}+\znorm{(I+I_5)^{-1}}_{D_0,s}\cdot \sum_{i=1}^4\znorm{I_i}_{D_0,m}\right).
\end{align}
When $m = s$, we have the low order estimate
\begin{align}\label{low}
|\widetilde A|_{D_0,s} \leq 2C_s   \znorm{(I+I_5)^{-1}}_{D_0, s}\cdot \sum_{i=1}^4 \znorm{I_i}_{D_0,s}.
\end{align}
Similarly when $m = r$, we have the high order estimate
\eq{high}
|\widetilde A|_{D_0,r} \leq C_r\cdot\left (\znorm{(I+I_5)^{-1}}_{D_0, r}\cdot \sum_{i=1}^4 \znorm{I_i}_{D_0,s}
+\znorm{(I+I_5)^{-1}}_{D_0,s}\cdot \sum_{i=1}^4\znorm{I_i}_{D_0,r}\right).
\eeq
We shall begin to estimate the right hand sides in the next section.

\section{Estimate of $I_1,\cdots, I_5$ and $\hat A$}\label{four term estimate}
\setcounter{thm}{0}\setcounter{equation}{0}

Let $A \in \Lambda^r$, $\frac{3}{2} < r \leq \infty$ be the error term in the original complex structure. 
Let $\frac{3}{2} < s < 3$.  In this section, we assume that $\frac{3}{2} < m \leq r$ if  $r <\infty$ and $\frac{3}{2} < m <\infty$ if $A\in C^\infty$. 
We  also replace the initial condition \eqref{ini_condition} by a stronger condition
\eq{ini_condition2}
t^{-1/2}\znorm{A}_{D_0,s}\leq\frac{1}{C_s^{**}}
\eeq
where $t \in (0, 1)$ and $C_s^{**}$ will be adjusted several times in this section.

When $r <\infty$, we choose our smoothing operator $S_t$ depending on $r$. More precisely, in \eqref{smoothing_op}, we choose $L = \lceil r \rceil$  where $\lceil \cdot \rceil$ means rounding up to the closest integer.  By the property of the smoothing operator (\ref{Zsmoothing1}), we get 
\eq{I1_m}
\znorm{I_1}_{D_0,m}=\znorm{(I-S_t)EA}_{D_0,m}\leq C_{r}t^{r-m}\znorm{EA}_{U_0,r}\leq C_r t^{r-m}\znorm{A}_{D_0, r}.
\eeq
Substituting $s, r$ into $m$, we have
\ga\label{I1_s}
\znorm{I_1}_{D_0,s}\leq C_r t^{r-s}\znorm{A}_{D_0,s}, \\
\znorm{I_1}_{D_0,r}\leq C_r \znorm{A}_{D_0,r}\label{I1_r}
\end{gather}
where the former inequality will be used to prove the rapid convergence of \eqref{low} and the latter will be used to control the growth rate of \eqref{high}. 

When $r = \infty$, we construct another smoothing operator by choosing $L = 3$ in the construction of  $S_t$.  Then for all $\frac{3}{2} < m < \infty$, we have  
\ga
|I_1|_{D_0,m} = \znorm{(I-S_t)EA}_{D_0,m}\leq C_{m}\znorm{A} _{D_0,m}. \label{I1_m_inf}
\end{gather}

To estimate $I_2$, we need to use the integrability condition $\dbar A = [A, \d A]$ on $D_0$ and estimate~\eqref{correctionest}.
We will also apply \eqref{1/2 estimate} to  $\dbar A\in \Lambda^{s-1}(D_0)$ in the following estimate, which forces us to impose a stronger condition
 \eq{}
 s>2.
 \eeq
By $\db A=[A,\pd A]$, we have 
\begin{align}\label{I2_m}
\znorm{I_2}_{D_0,m}&=\znorm{S_t EQ\dbar A}_{D_0,m}\leq C_s' t^{-1/2}\znorm{EQ\dbar A}_{U_0, m-1/2}
\\
&\leq C_m'C_{m-\frac{1}{2}}''t^{-1/2}\znorm{Q\dbar A}_{D_0,m-1/2}
\nonumber
\\
&\nonumber
\leq  C_m'C_{m-\frac{1}{2}}'' C_{m-\frac{1}{2}} t^{-1/2}\znorm{\dbar A}_{D_0, m-1}
\\&\nonumber
\leq C_m t^{-1/2}(\znorm{A}_{D_0, m-1}
\norm{A}_{D_0, 1} + \znorm{A}_{D_0, m}
\norm{A}_{D_0, 0})\\
&\nonumber
\leq C_m t^{-1/2}\znorm{A}_{D_0,s}
\znorm{A}_{D_0,m}.
\end{align}
Using initial condition \eqref{ini_condition2} and applying the above estimate with $s, r$ in place of $m$, we get
\ga
\label{I2_s}
\znorm{I_2}_{D_0,s}\leq C_st^{-1/2} \znorm{A}^2_{D_0,s}, \\
\znorm{I_2}_{D_0,r}\leq  C_r \znorm{A}_{D_0,r}.\label{I2_r}
\end{gather}
When $r = \infty$, we have
\ga
\znorm{I_2}_{D_0,m}\leq C_m\znorm{A}_{D_0, m}. \label{I2_m_inf}
\end{gather}

For the estimate of $I_3$, we have 
\begin{align}\label{I3_m}
\znorm{I_3}_{D_0,m}=\znorm{S_t[\dbar, E]PA}_{D_0,m}&\leq C'_m \znorm{[\dbar, E]PA}_{U_0,m}\\
&\nonumber
\leq C''_{r,m} t^{r+1/2-m-1}\znorm{PA}_{D_0,r+1/2}\quad
\\
&\nonumber
\leq C_r t^{r-m-1/2}\znorm{A}_{D_0, r},
\end{align}
where we used Lemma \ref{commutator} in the second inequality. 
Applying the above estimate with $s, r$ in place of $m$,  we have
\ga
 \znorm{I_3}_{D_0,s}
 \leq C_r t^{r-s-1/2}\znorm{A}_{D_0,s}   \quad r-\frac{1}{2}\geq s > 2 \label{I3_s}, \\
\znorm{I_3}_{D_0,r}\leq C_r t^{-1/2}\znorm{A}_{D_0,r}. \label{I3_r}
\end{gather} 
We remark here that $t^{-1/2}$ in the coefficient of \eqref{I3_r} is the main obstruction that prevents us from having a linear growth in~\cites{MR995504, MR3961327} for the high order norms.

When $r=\infty$, we have 
\ga
\znorm{I_3}_{D_0,m}\leq C_m t^{-1/2}\znorm{A}_{D_0,m}. \label{I3_m_inf}
\end{gather} 

The estimate for $I_4$ is more involved.
Recall that $f = -S_t EPA 
$. By
{\eqref{initial_est1}}
we have  
\begin{align}\label{I4_m}
\znorm{I_4}_{D_0,
m}=\znorm{A\d f}_{D_0,m}&\leq {C}_{m+1}\left(\znorm{A}_{D_0,m}
\norm{f}_{D_0,1}
+\norm{A}_{D_0,0}\znorm{f}_{D_0,m+1}\right)\\
&\nonumber\leq C_{m+1}\left(\znorm{A}_{D_0,m}
\znorm{A}_{D_0,s}+\norm{A}_{D_0,0}t^{-1/2}\znorm{A}_{D_0,m}\right)\\
&\nonumber\leq C_{m+1}t^{-1/2}
\znorm{A}_{D_0,s}
\znorm{A}_{D_0,m}.
\end{align}
Using initial condition \eqref{ini_condition2} and applying the above estimates with $r, s$ in place of 
$m$ respectively, we get
\ga
\label{I4_s}
\znorm{I_4}_{D_0,s} 
\leq C_{s+1}t^{-1/2}\znorm{A}_{D_0,s}^2, \\
\znorm{I_4}_{D_0,r}\leq C_{r+1} \znorm{A}_{D_0,r}.\label{I4_r}
\end{gather}
When $r = \infty$, we have
\ga
\znorm{I_4}_{D_0,m}\leq C_{m+1} \znorm{A}_{D_0,m}.\label{I4_m_inf}
\end{gather}

Finally, we need to estimate the low and high order derivatives for $I_5$ and $(I+I_5)^{-1}$:
\eq{I5_m}
\znorm{I_5}_{D_0,m} = \znorm{\overline{\d f}+ A\dbar f}_{D_0,m}\leq C_{m+1} t^{-1/2}\znorm{A}_{D_0,m}.
\eeq

Using the initial condition \eqref{ini_condition2} for $C_{s}^{**}$ sufficiently large and applying the above estimate with $r$ in place of  $m$, we get
\ga
\label{I5_s}
\znorm{I_5}_{D_0,s}\leq
1/C_s, \\
\znorm{I_5}_{D_0,r} \leq C_{r+1} t^{-1/2}\znorm{A}_{r}. \label{I5_r}
\end{gather}
When $r = \infty$, 
\ga
\znorm{I_5}_{D_0,m} \leq C_{m+1} t^{-1/2}\znorm{A}_{m}. \label{I5_m_inf}
\end{gather}

We 
now consider $(I+I_5)^{-1}-I$. By matrix inversion formula  $(I+I_5)^{-1}= \det(I+I_5)^{-1}(A_{ij})$ where $(A_{ij})$ is the transpose of the adjugate matrix of $I+I_5$. Notice that every entry in $(I+I_5)^{-1}-I$ is a polynomial 
in $(\det (I+I_5))^{-1}$ and entries of $I_5$  without constant term and with fixed degree.

We can now estimate $\znorm{(I+I_5)^{-1}-I}_{D_0,m}$ using product rule
$$
\znorm{\Pi_{i=1}^n u_i}_{D_0, m}\leq C\sum_{i=1}^n \znorm{u_i}_{D_0,m}\prod_{i\ne j}\norm{u_j}_{D_0,0}.
$$
It is then easy to show that
\begin{equation}
\znorm{(I+I_5)^{-1}-I}_{D_0, m}\leq C_m \frac{\znorm{I_5}_{D_0, m}}{1-C_*\norm{I_5}_{D_0,0}}, \quad C_*>0.
\end{equation}
Consequently, 
\eq{I+I5_inv_m}
\znorm{(I+I_5)^{-1}}_{D_0,m}\leq C_m(1 + \znorm{I_5}_{D_0,m}).
\eeq
By \eqref{I5_s},
we have the estimate
\eq{I+I5_inv_s}
\znorm{(I+I_5)^{-1}}_{D_0,s}\leq C_s(1 + \znorm{I_5}_{D_0,s})\leq2 C_s.
\eeq
Letting $m = r$, we have
\eq{I+I5_inv_r}
\znorm{(I+I_5)^{-1}}_{D_0,r}\leq C_r(1 + \znorm{I_5}_{D_0,r}).
\eeq
Similarly, when $r=\infty$, 
\ga
\znorm{(I+I_5)^{-1}}_{D_0,m}\leq C_m(1 + \znorm{I_5}_{D_0,m}). \label{I+I5_inv_m_inf}
\end{gather}

Using s-norm derivative estimates  \eqref{I1_s},\eqref{I2_s},\eqref{I3_s},\eqref{I4_s} and \eqref{I+I5_inv_s} in the product rule formula  \eqref{low} and the fact that $\norm{g}_{1+\e}<\frac{1}{2}$, we obtain
\begin{equation}
|\widetilde A|_{D_0,s} \leq C_{r}\left( t^{r-s-1/2}\znorm{A}_{D_0,r}+t^{-1/2}\znorm{A}^2_{D_0,s}\right).
\end{equation}

Let $C_s^{**}$ be sufficiently large in the initial condition \eqref{ini_condition2}, and we may assume that $s$-norms of $I_1, I_2, I_3, I_4, (I+I_5)^{-1}$ are uniformly bounded by some positive constant $C_r$. 

Then by using $r$-norm estimates  \eqref{I1_r},\eqref{I2_r},\eqref{I3_r},\eqref{I4_r} and \eqref{I+I5_inv_r}  in the  product rule formula \eqref{high}, we obtain
\eq{}
|\widetilde A|_{D_0, r}\leq C_r t^{-1/2}\znorm{A}_{D_0,r}, \quad r > 2.
\eeq
Similarly, when $r=\infty$,  we have
\ga
|\widetilde A|_{D_0, m}\leq C_m t^{-1/2}\znorm{A}_{D_0,m}, \quad m > 2.
\end{gather}

Noticing that $\hat A = \widetilde A \circ G$,  we apply \eqref{estgs+} and \eqref{Error}
to get
\ga
\label{Low}
|\hat A|_{D_1,s} \leq C^*_{r}\left( t^{r-s-1/2}\znorm{A}_{D_0,r}+t^{-1/2}\znorm{A}_{D_0,s}^2\right), \quad r-\frac{1}{2}\geq s > 2, \\
|\hat A|_{D_1,r}\leq C^*_r t^{-1/2}\znorm{A}_{D_0,r}, \quad r > 2. \label{High}
\end{gather}
And when $r=\infty$,
\ga
|\hat A|_{D_1,m}\leq C^*_m t^{-1/2}\znorm{A}_{D_0,m}, \quad m> 2. \label{High_inf}
\end{gather}


We have  derived the estimates for new $A$ and $f,g$ under the assumption \eqref{ini_condition2} where the $C_{s}^{**}$ is now fixed for the rest of the proof. Moreover, $C_2^*, C_{s}^*$ have been fixed in \eqref{initial_est}, \eqref{initial_est1} and $C_r^*, C_m^* $ have been fixed in \eqref{Low}, \eqref{High} and \eqref{High_inf}.
\section{Levi form of iterated domains}\label{sec  iteration}
\setcounter{thm}{0}\setcounter{equation}{0}
Let us summarize what we have achieved so far 
under the assumption \eqref{ini_condition2} on error $A_0$.
Let $D_0$ be a strictly pseudoconvex domain with $C^2$ boundary in $\C^n$ and let $X_{(0)} = \dbar + A_0 \d\in \Lambda^r(\ov{D_0})$ be the initial perturbed integrable almost complex structure where $\dbar=(\d_{\ov 1},\dots, \d_{\ov n})$ is the standard complex structure on $\C^n$ and $\d$ is its conjugate.  In Section \ref{approximation_of_embedding}, we defined  $F_0 = I + f_0$ to be our first approximate solution where $$
f_0 = -S_{t_0}E_{D_0} P_{D_0} A_0 
$$ for some $t_0 > 0$ to be determined.  Let $D_1 \defeq F_0(D_0)$  be the new domain and $A_1$ be the error for the new almost complex structure on $D_1$ where 
$$A_{1} \circ F_0= (I +\ov  \d f_0
+ A_0 \d  \overline f_0)^{-1}(A_0 + \dbar f_0 + A_0\d f_0)
$$
by Lemma \ref{elem_pp}. We then obtained the estimates $(\ref{Low}), (\ref{High})$ for the new error $A_1$ in terms of certain low and high order norms of the previous error $A_0$. 

We would like to repeat the above procedure on $D_1$ to further reduce 
the new
error. However, in order to define the approximate solution $F_1 = I + f_1$ on $D_1$ via the homotopy formula, we have to show that $D_1$ is still a strictly pseudoconvex domain with $C^2$ boundary. This is true provided that the initial error is small enough. In fact, we shall set up an iteration scheme and prove a general statement in the next proposition.

Without loss of generality, we assume that 
$$
  D_0 = \{z\in \U : \rho_0(z) < 0\} \subset \ov{D_0}\subset \U \subset B_0
$$ 
where $\rho_0$ is some $C^2$ defining function of $D_0$, $\U$ is some open neighborhood of $D_0$  and $B_0 = \{z\in \C^n : |z| < 100\}$.

Next, we discuss how the Levi-form of a $C^2$ domain is  controlled by a sequence of
$C^2$ diffeomorphism.

It will be convenient to extend the defining function of  a domain to a larger and fixed domain. Let $\rho_0$ be a $C^m$ defining function of $D_0$ on $\mathcal U$. Suppose that   $\pd \U\in C^1$ and  $D_0$ is relatively compact in $\mathcal U$.
Define
\eq{tEu}
\widetilde Eu=\chi E_{\mathcal U}u+(1-\chi),
\eeq
where $\chi\geq0$ is a smooth  function that equals $1$ on $\mathcal U_1$ for some $\mathcal U_1$ and  has compact support in $B_0$ and $E_{\mathcal U}\rho_0>0$ on $\ov{\mathcal U_1}$, and furthermore
$\ov{\mathcal U}\subset\mathcal U_1\subset B_0$. 
\le{iter_cont}
Fix a positive integer $m$.  Let $D_0\subset \mathcal U\subset B_0 \subset \rr^N$ with $\ov{D_0}\subset \mathcal U$. 
Suppose that $D_0$ admits a $C^m$ defining function $\rho_0$ satisfying
$$
D_0=\{x\in \mathcal U\colon \rho_0(x)<0\}
$$
where $\rho_0>0$ on $\ov {\mathcal U}\setminus D_0$ and $\nabla\rho_0\neq0$ on $\pd D_0$. 
Let 
$F_j = I + f_j$ be a $C^m$ diffeomorphism which maps ${B_0}$ onto $B_0$ and maps $D_j$ onto $D_{j+1}$. 
Let 
$\rho_1=(\widetilde E\rho_0)\circ F_0^{-1}$ and $\rho_{j+1}=\rho_j\circ F_j^{-1}$ for $j>0$, which are defined on $B_0$.
For any $\e>0$, there exists 
$$\delta=\delta(\rho_0,\epsilon, m)>0$$
such that if
\eq{}
\norm{f_j}_{{B_0},m}\leq \frac{\delta}{(j+1)^2},\quad 0\leq j< L,
\eeq
then
we have the following.
\bppp
\item
 $\tilde F_j=F_j\circ\cdots\circ F_0$ and $\rho_{j+1}$ satisfy
\ga\label{C2conv-g}
\norm{\tilde F_{j+1}-\tilde F_j}_{{B_0},m}\leq C_m\frac{\delta}{(j+1)^2},\quad 0\leq j< L\\
\norm{\tilde F_{j+1}^{-1}-\tilde F_j^{-1}}_{{B_0},m}\leq C'_m\frac{\delta}{(j+1)^2},\quad 0\leq j< L,\\
\norm{\rho_{j+1} - \rho_0}_{{\U},m} \leq \e, \quad 0\leq j< L.
\label{Ucont}
\end{gather}
\item
 All $D_j$ are contained in $\mathcal U$ and
\eq{distDj}
\dist (\pd D_j,\pd D)\leq C\e, \quad \dist(D_j,\pd {\mathcal U})\geq\dist(D_0,\pd {\mathcal U})-C\e.
\eeq
\eppp
In particular, when $L=\infty$, $\widetilde F_j$ converges in $C^m$ to a $C^m$ diffeomorphism from ${B_0}$ onto itself, while $\rho_j$ converges in $C^m$  of ${B_0}$ as $\widetilde F^{-1}_j$ converges in $C^m$ norm on the set.
\ele
\begin{proof} 
Let us denote $\tilde E\rho_0$ from \eqref{tEu} by $\rho_0$. 

$(i)$
 Let $\tilde F_i=I+\tilde f_i$.  We have
  $
 \tilde f_{i+1}=f_{i+1}\circ \tilde F_i+\tilde f_i.
 $
 By the chain rule, we get
 $$
 \norm{\tilde f_{i+1}-\tilde f_i}_{{B_0},m}\leq C_m\norm{f_{i+1}}_{m}\prod_i(1+\norm{f_i}_m)^{2m}\leq C_m'\norm{f_{i+1}}_m\leq C_m\frac{\delta}{(i+1)^2}.$$ Let $F_i^{-1}=I+g_i$ and $\tilde F_i^{-1}=I+\tilde g_i$. 
On $B_0$,  we can use the identity  $\tilde F_{i+1}^{-1}-\tilde F_i^{-1}=\tilde F_{i+1}^{-1}-F_{i+1}\circ \tilde F_{i+1}^{-1}$.  Thus
 $$
 \tilde g_{i+1}-\tilde g_i=- f_{i+1}\circ  \tilde G_{i+1}.
$$
By the chain rule, we get $\norm{G_{i+1}}_m\leq C_m(1+\norm{\tilde f_{i+1}}_m)^{2m}\leq C_m'$. Then we obtain
$$
\norm{
\tilde g_{i+1}-\tilde g_i}_m\leq C_m\norm{f_{i+1}}_m, \quad 
\norm{\tilde g_i}_m\leq C_m\delta.$$

So far we have not used any assumption on $\delta$ other than the condition that $\delta<C$. To verify \eqref{Ucont}, we must use the uniform continuity of the $m$-th derivatives of $\rho_0$.  Let $D_K$ be a derivative of order $k$. 
We have
\ga
\rho_{i+1}-\rho_0=\rho_0\circ \tilde G_i-\rho_0 \\
 D_K(\rho_{i+1}-\rho_0)=(D_K\rho_0)\circ \tilde G_{i}-D_K\rho_0+\sum P_{K,K'}(\pd_x \tilde g_{i}, \dots, \pd _x^k\tilde g_{i})D_{K'}\rho_0 \label{de_cond}
\end{gather}
where $P_{K,K'}(\pd_x\tilde  g_{i}, \dots, \pd _x^k\tilde g_{i})$ is a polynomial without constant term.  Thus its sup norm is bounded by
$$
C_m\norm{\tilde g_{i}}_m\leq C_m'\delta.
$$ 
Applying the chain rule, we bound the sup norm of $D_{K'}\rho_i$ by $C\norm{\rho_0}_m$. By the uniform continuity of $D_K\rho_0$ and the estimate
$$
\norm{
\tilde g_{i}}_0\leq C_m\delta,
$$
we therefore obtain $\norm{(D_K\rho_0)\circ \tilde G_{i}-D_K\rho_0}_0<\e/2$ when $\delta$ is sufficiently small.  

$(ii)$
Applying \eqref{Ucont} for $m=1$ implies that when $\e<\e(\rho_0)$ and $\e(\rho_0)>0$  is sufficiently small,  we have
\eqref{distDj}, where $C$ depends only on $\nabla \rho_0$. \end{proof} 

To use the Levi-form of $\rho$ at $z$, it will be convenient to define
$$
T_z^{1,0}\rho=\left\{\sum t_j\pd_{ z_j}\colon \sum t_j\pd_{z_j}\rho(z)=0\right\}, \quad |\sum t_j\pd_{z_j}|=\sqrt{\sum|t_j|^2}.
$$

\le{levi-forms}
 Let $D$ be a  relatively compact $C^2$ domain in $\mathcal U$  defined by a $C^2$ functions $\rho$. 
There are 
$\e=\e(\rho)>0$ and a neighborhood $\mathcal{N}=\mathcal{N}(\rho)$ of $\pd D$ such that if 
 $\norm{\tilde\rho-\rho}_{\mathcal U,2}<\e$, then we have
 $$
\inf_{\tilde z, \tilde t, |\tilde t|=1}\{ L\tilde\rho(\tilde z,\tilde t)\colon
 \tilde t\in T_{F(z)}^{1,0}\tilde\rho, \tilde z\in \mathcal{N}\}
\geq\inf_{z, t,|t|=1} \{L\rho(z,t)\colon  t\in T_z^{1,0}\rho,  z\in \mathcal{N}\}-C
\e.
 $$
  Furthermore, $\tilde D=\{z\in \mathcal U\colon\tilde\rho<0\}$ is a $C^2$ domain with
  $\pd \widetilde D\subset\mathcal N(\rho)$.
\ele 
\begin{proof} 
Since $\rho$ is a $C^2$ defining function of $D$, there exists a neighborhood $\mathcal{N}$ of $\d D$ such that $\nabla \rho(z) \ne 0$ for $z\in\mathcal{N}$. 
Let $\tilde z  \in \mathcal{N}$. Without loss of generality, we assume that $\tilde z = 0$ and $\frac{\d  \rho}{\d z_n} \ne 0$.  When $\delta$ is small, we still have $\frac{\d \tilde \rho}{\d z_n} \ne 0$.  Consequently, we know that 
$T^{1,0}_0 \tilde \rho$ is spanned by $\{\d_i - \frac{\d_i \tilde\rho(0)}{\d_n \tilde\rho(0)}\d_n\}_{i=1}^n$. Let 
$$
\tilde t_i = \f{\d_i - \frac{\d_i \tilde\rho(0)}{\d_n \tilde\rho(0)}\d_n}{|\d_i - \frac{\d_i \tilde\rho(0)}{\d_n \tilde\rho(0)}\d_n|}, \quad t_i = \f{\d_i - \frac{\d_i \rho(0)}{\d_n \rho(0)}\d_n}{|\d_i - \frac{\d_i \rho(0)}{\d_n \rho(0)}\d_n|}.
$$
Then we have
\begin{align*}
\left|\sum_{i,j}\f{\d^2\tilde\rho(0)}{\d z_i \d \ov{z_j}} \tilde t_i \ov{\tilde t_j} - \sum_{i,j}\f{\d^2\rho(0)}{\d z_i \d \ov{z_j}}  t_i \ov{ t_j} \right|  \leq  C\e.
\end{align*}
Shrinking $\mathcal N$ if necessary, we have $\rho>\e'$ on $\pd \mathcal N\setminus D$. Taking $\e<\e'/2$, we conclude that $\pd \widetilde D$ is contained in $\mathcal N$. 
\end{proof}

Before stating our main result in this section, let us fix some notations that will appear in the next proposition. Let
$\U$ be the same open neighborhood of $D_0$ appeared in the homotopy formula \eqref{htf} and
$$
\quad B_0 = \{z\in \C^n: |z| < 100\}.
$$
Recall that the constants $C_2^*, C_{s}^*, C_{s}^{**}, C_r^*, C_m^*$ appeared in \eqref{initial_est}, \eqref{initial_est1}, \eqref{ini_condition2}, \eqref{Low}, \eqref{High} and \eqref{High_inf} have been fixed.  Next, recall that for a bounded strictly pseudoconvex domain $D_0$ with a $C^2$ {\emph{standard}} defining function $\rho_0$,  there is a positive $\e(D_0)$ such that if $\norm{\rho-\rho_0}_2<\e(D_0)$, then all the bounds in the estimates for Stein extension, Nash-Moser smoothing operator, and the homotopy operator in~\ci{MR3961327} are {\emph{upper stable}} for domains with defining function $\rho_0$. See Remark~\ref{stremark}. Finally, let  
\begin{equation}\label{delD0}
\de 
(\rho_0,\e)
= \de(
\rho_0, \e,
2), \quad \de(\rho_0)=\de(\rho_0,\e(D_0),2).
\end{equation}
 be the constant from Lemma \ref{iter_cont}.
 \pr{iteration}
Let $2<s<3$ and 
$s+\sqrt2+\f{3}{2}<r
 <\infty$. Let $C_2^*$, $C_s^{**}$, $ C_r^*, \e(D_0)$,  $\de
 (\rho_0)$ be the constants stated above and let positive numbers $\a,\b,d, \g, \k$ satisfy
\ga\label{r-s}
r-s-\f{1}{2}-\g-\k>\a d+\b,\quad \a(d-1)>\f{1}{2} 
+ \k,
\quad \b(2-d)>\f{1}{2} 
+ \k.
\end{gather}
Let $ \dbar:=(\d_{\ov 1}, \dots,\d_{\ov n})^t$ be the standard complex structure on $\C^n$  and let $\d$ be its conjugate.
Let $D_0$ be a bounded strictly pseudoconvex domain 
with a $C^2$ defining function $\rho_0$ on $\mathcal U$
and $X_{(0)} = \dbar + A_0 \d\in \Lambda^r(\ov{D_0})$  be a formally integrable almost complex structure. 
There exists a constant 
$$
\hat t_0:=\hat t_0(r,s,\a,\b,d,\k, C_2^*,  C_{s}^{**}, C_r^*, \e(D_0), \de
(\rho_0)) \in (0,1/2)
$$
such that if  $0<t_0\leq \hat t_0$ and
\ga
\label{first-init}
\znorm{A_0}_{
\ov{D_0},
s}\leq t_0^\a, \\
\label{missed}
\znorm{A_0}_{\ov{D_0}, r} \leq t_0^{-\g},
\end{gather}
then the following statements are true for  $i =0,1,2\dots.$
\bppp
\item There exists a $C^\infty$ diffeomorphism   $F_i=I+f_i$ from $B_0$ onto itself  with $F_i^{-1}=I+g_i$ such that $f_i, g_i$ satisfy 
\ga
\label{f-is}
\znorm{g_i}_{
B_0,s+1}\leq C_s\znorm{f_i}_{B_0,s+1},\quad
\znorm{f_i}_{B_0,s+1}\leq C_{s}^*t_i^{-1/2}a_i
\end{gather}
where
$$
 t_{i+1} = t_{i}^d,\quad i\geq0.
$$
\item  Set $\rho_{i+1}=\rho_i\circ F_i^{-1}$. Then 
$D_{i+1} := F_i(D_i) = \{z\in \mathcal U: \rho_{i+1}< 0\}$ and 
\ga\label{rhoi+1}
\norm{\rho_{i+1} - \rho_0}_{\U, 2} \leq \e(D_0),\\
\dist( D_{i+1}, \pd \U)\geq \dist(D_0, \pd \mathcal U)-C\e.
\label{distDi+1}
\end{gather}
\item $( F_{
i}|_{\ov D_i})_\ast(X_{(i)})$ is in the span of $X_{
i+1
} := \dbar + A_{i+1} \d$ on $\ov{D_{i+1}}$.  Moreover,  $ a_{i+1}= \znorm{A_{i+1}}_{D_{i+1},s}$ and $ L_{i+1} = \znorm{A_{i+1}}_{D_{i+1}, r}$ satisfy
$$
a_{i+1} \leq t_{i+1}^\a,\quad L_{i+1}\leq L_0t_{i+1}^{-\b}.
$$ 
\item If in addition $A_0\in C^\infty(\ov D_0)$, then for any $m>1$ and $M_i = \znorm{A_i}_{D_i, m}$,  we have
$$
\znorm{f_i}_{D_i,m+\frac{1}{2}}\leq C_m M_i.
$$
Moreover, there exist some $\eta(d)> 0$ independent of $m$ and $N = N(m, d) \in \N$ such that for all $i > N$, 
\eq{MMN}
M_{i}\leq M_{N} t_i^{-\eta}.
\eeq
\eppp
\end{prop}

The 
 assertions $(i),(ii)$
 above clearly imply that $D_i$ is a strictly pseudoconvex domain with $C^2$ boundary.  According to the remark at the end of Section \ref{sec Settings}, the assertion 
 $(ii)$
implies that we can choose the constants $C_2^*, C_s^*, C_s^{**}, C_r^*, C_m^*$ to be independent of $D_i$ provided that $\e=\e(D)$ is sufficiently small. This is important for our iteration to converge.  The last two assertions roughly say that we have rapid decay in the low order norm while rapid growth in the high order norm.  This is different from previous work, e.g. \cites{MR995504, MR3961327}, where the high order norm grows only linearly. We refer the reader to   \cites{MR995504, MR3961327} for precise definition of rapid and linear growth. Here we would like to point out that for this reason, the parameters $\a,\b,d, \g, \k, s$ will be carefully chosen in the end to both accommodate the constraints obtained in the iteration procedure and 
 achieve optimal regularity results for the convergence.

\begin{proof}
We are given $\a>0,\b>0,d>1, \g>0, \k>0$ satisfying \eqref{r-s}. We will see at the end of the proof that such $\a,\b,d, \g, \k$ exist when
$r-s>\sqrt2+3/2$. It is also clear that $\a>1/2$, $\b>1/2$ and $1<d<2$.

For the moment, we 
require
 $\hat t_0\in (0,\frac{1}{2})$.  We will further adjust $\hat t_0$ a few times 
 and indicate its explicit dependency on parameters mentioned in the statement of the proposition. This will be used in next section to prove the lower stability of $\de_r(D_0)$ which appeared in  Theorem \ref{Global NN}.

\medskip
We consider first the case when $i = 0$. 

Let $E_{
0
} $ be the Stein extension operator on $D_{0}$ and let $S_{t_{0}} : C^0(\U)\to C^\infty (D_{0})$ be the Nash-Moser smoothing operator. Let $P_{D_{0}, \U}, Q_{D_{0}, \U}$ be the homotopy operators defined in Section  \ref{sec Settings} (we shall abbreviate them as $P_{0}, Q_{0}$ for simplicity). We defined in Section \ref{approximation_of_embedding} that
$$
f_{0}= -S_{t_{0}}E_{0} P_{0}  A_{0} , \quad F_{0} = I + f_{0}.
$$ 
By an abuse of notation, we still denote by $F_{0} $  its extension $E_{0} F_{0}$ to $C^\infty(\C^n)$.   

 Assume that \eqref{ini_condition2} holds, i.e.
 \eq{copy1}
t_{0}^{-1/2} |A_{0}|_{D_0, s}  \leq \frac{1}{C_s^{**}}. 
\eeq
Then according to \eqref{estgss} and
\eqref{initial_est1}, we have
\begin{gather}
\label{copyestgss}
\znorm{g_0}_{
{B_0}, 
s+1}\leq C_s \znorm{f_0}_{{B_0}, s+1},\\
\label{copyinitial_est1}
\znorm{f_0}_{{B_0}, s+1}\leq
C^{*}_{s} t_0^{-1/2}\znorm{A_0}_{D_0,s}.
\end{gather}
To achieve \eqref{copy1}, we require that
\eq{t0_1}
\hat t_0 \leq\left(\frac{1}{C_s^{**}}\right)^{\frac{2}{2\a-1}}.
\eeq
Then it is clear that \eqref{copy1} is ensured by \eqref{first-init} and \eqref{t0_1} since  $\a>1/2$ is fixed and 
$$
t_{0}^{-1/2} |A_{0}|_{D_0, s} \leq t_0^{\a - 1/2} \leq \frac{1}{C_s^{**}},
$$
when $t_0\leq\hat t_0$. This gives us $(i)$ for $f_0,g_0$ without requiring \eqref{missed}.

Now we verify $(ii)$ when $i=0$.  Let $\de = 
\de(\rho_0) 
$ be the constant appeared in \eqref{delD0}.
Assume that 
\eq{t0_2}
\hat t_0 \leq \left(\frac{\de}{C_2^*}\right)^{2}.
\eeq
Then according to \eqref{initial_est} and \eqref{first-init},  for $0 < t_0 \leq \hat t_0$, we have
\eq{}
\norm{f_{0}}_{B_0, 2} \leq C_2^* a_{0} \leq C_2^* t_0^{\a }.
\eeq
Consequently, we obtain from \eqref{Ucont} 
$$
\norm{\rho_{1} - \rho_0}_{\mathcal U, 2} \leq \e(D_0).
$$
By Lemma \ref{iter_cont} $(ii)$,  we have
$$
\dist(D_1, \d U) \geq \dist(D_0, \d U) - C\e.
$$ 

To verify $(iii)$ for $i=0$, let us recall what we proved in Section \ref{four term estimate}. Since \eqref{ini_condition2} is satisfied for $i = 0$ by \eqref{t0_1},  then from \eqref{Low} and \eqref{High} we know that  $a_{1}=\znorm{A_{1}}_{D_{1}, s}, L_{1} = \znorm{A_{1}}_{D_{i+1},r}$ satisfy 
\ga\label{copy2}
a_{1}\leq C^*_r\cdot( t_0^{r-s-1/2}L_0+t_0^{-1/2}a_0^2),
\\
L_{1}\leq C^*_r\cdot (t_0^{-1/2}L_0).
\label{copy3}
\end{gather}
 We would like to show that 
\eq{induction}
a_{1}\leq t_{1}^\a,\quad\a\geq1/2;  \quad
L_{1}\leq L_0 t_{1}^{-\b}
\eeq
where $\a > 1/2,\b >0$ and $t_{1}=t_0^{d}$ for some $d>1$.

Here we must use \eqref{missed} in addition to \eqref{first-init}.    
Recall that the positive parameters $\a,\b,\k, \g$ have been given  such that
\gan
\{(\a, \b, d, s): \a d+\b < r-s-1/2-\k-\g\} \ne \O.
\end{gather*}
Next, choose $\hat t_0\in(0,1/2)$  so that  
\eq{t0_3}
\hat t_0  \leq \left(\frac{1}{2 C^*_r}\right)^{\frac{1}{\k}}.
\eeq
Note that this implies $2C^*_r t_0 ^{\k} \leq 1$ for $0<t_0<\hat t_0$.
Then it is easy to see from \eqref{copy2} and \eqref{copy3}  the following inequalities 
\gan
a_1\leq \frac{1}{2}( t_0^{r-s-1/2}t_0^{-\k-\g}+t_0^{2\a}t_0^{-1/2} t_0^{-\k})\leq t_0^{\a d}= t_{1}^\a,\\
L_{1}\leq t_0^{-1/2}
 t_0^{-\k}L_0\leq t_0^{-\b d}L_0=  t_{1}^{-\b}L_0.
\end{gather*}
Here we have used
 \eqref{first-init}-\eqref{missed} and  assumed the following constraints on $ \a, \b, d,\k, \g$
\ga
\a d < r-s-1/2-\k-\g ,\\
\a(2-d)>1/2+\k, \quad \a>0,  \\
\b d>1/2+\k, \quad \b>0. 
\end{gather}
We have now verified $(iii)$ for $i=0$ assuming the intersection of these constraints is nonempty. We will see in the induction step that this is true provided that $2 < s < 3$ and $s +\frac{3}{2}+ \sqrt{2}< r < \infty$.

Part $(iv)$ will be proved separately at the end of the proposition.
 
\medskip
Now assume that the induction hypotheses hold for some $i-1 \in \N$, $i \geq 1$. 

$(i)$  By induction hypothesis $(i)$, $(ii)$ and Lemma \ref{levi-forms}, we know that $D_{i}$ is a $C^2$ strictly pseudoconvex domain. Therefore, we can
apply the construction of the approximate solution defined in Section \ref{approximation_of_embedding} on $D_i$
$$
f_i = S_{t_i} E_i P_i A_i, \quad F_i = I + f_i,
$$
where $E_{i} $ is the Stein extension operator on $D_{i}$, $S_{t_{i}} : C^0(\U)\to C^\infty (D_{i})$ is the Nash-Moser smoothing operator and  $P_i = P_{D_{i}, \U}, Q_i = Q_{D_{i}, \U}$ are the homotopy operators defined in Section  \ref{sec Settings}.
  Moreover, by induction hypothesis $(ii)$, we can assume that $C_s^*$ is independent of $1,2,\cdots, i$.  Therefore, estimates \eqref{initial_est} and \eqref{initial_est1} hold for $f_{i}$,
\ga
\norm{f_{i}}_{B_{0}, 2} \leq C_2^* a_{i}, \\
\znorm{ f_{i}}_{B_{0}, s+1}\leq C_s^* t_i^{-1/2} a_{i}.
\end{gather}
Notice that $f_i = E_i f_i$ has compact support in $B_0$. Obviously, we have  by $(ii)_{i-1}$
$$
C_2^*  a_{i} \leq C_2^*t_i^{1/2}\leq C_2^*t_0^{1/2}\leq 1/2. 
$$
Then by Lemma \ref{Webster}, $F_i$ is a diffeomorphism from $B_0$ to itself and  $G_{i}\defeq F_{i}^{-1}$ exists on $B_{0}$. 

\medskip

$(ii)$ Let $\de = \de(\rho_0, \e, s)$ be the constant appeared in Lemma \ref{iter_cont}.
Let 
$$
D_{i+1} = \{z\in \U: \rho_{i+1}(z)< 0\},
$$ where $\rho_{i+1}(z) = \rho_{i} \circ G_{i}$.
Since $D_i$ is strictly pseudoconvex by induction, by \eqref{initial_est} we get
\eq{}
\norm{f_{i}}_{B_0, 2} \leq C_2^* a_{i} \leq C_2^* t_i^{\a },
\eeq
where we used induction hypothesis $(iv)$ for $i-1$ in the last inequality.
Notice that we have
$$
C_2^* t_i^{\a} \leq \frac{\de}{(i+1)^2},
$$
assuming that 
\eq{t0_4'}
C_2^*\hat t_0^{\frac{1}{2}d^i}\leq \frac{\delta}{(i+1)^2}.
\eeq
This has been achieved for $i = 0$ in \eqref{t0_2}.  We show how to achieve this condition for all $i$ assuming $\hat t_0$ sufficiently small. 
Indeed, assume that we have achieved \eqref{t0_4'} for $i-1$. Then
$$
C_2^* \hat t_0^{\frac{1}{2}d^{i}}=C_2^* \hat t_0^{\frac{1}{2}d^{i-1}}\hat t_0^{\frac{1}{2}d^{i-1}(d-1)}\leq \frac{\de}{i^2} \hat t_0^{\frac{1}{2}d^{i-1}(d-1)} \leq \frac{\delta}{(i+1)^2},
$$
where the last inequality holds for all $i \geq 1$ 
 by  requiring that
\eq{t0_4}
\hat t_0 \leq 4^{-\frac{2}{d-1}}.
\eeq
Consequently, we obtain from \eqref{Ucont} 
$$
\norm{\rho_{i+1} - \rho_0}_{\mathcal U, 2} \leq \e(D_0).
$$
Note that \eqref{distDi+1} follows from \eqref{rhoi+1} and  \eqref{distDj}. 

\medskip 

$(iii)$ The verification for $(iii)$ in the general case is similar to the case when $i = 0$. However,  as we will see extra constraints on the parameters $\a, \b, d, \k, \g$ appear, when $i > 0$.   

According to induction hypothesis $(iv)$ and \eqref{t0_1}, we have
$$
t_{i}^{-1/2} |A_{i}|_{D_i, s} \leq t_i^{\a - 1/2} \leq \frac{1}{C_s^{**}}. 
$$
Moreover, since $D_{i}$ is a $C^2$ strictly pseudoconvex domain, we know from Section \ref{sec correction estimate} that $(\ref{Low}), (\ref{High}) $ are valid for 
$a_{i+1}=\znorm{A_{i+1}}_{D_{i+1}, s}, L_{i+1} = \znorm{A_{i+1}}_{D_{i+1},r}$:
\gan
a_{i+1}\leq C^*_r\cdot( t_i^{r-s-1/2}L_i+t_i^{-1/2}a_i^2), \\
L_{i+1}\leq C^*_r\cdot (t_i^{-1/2}L_i).\end{gather*}
Here the constant  $C^*_r$ does not depend on 
$ i$ by induction hypothesis $(ii)$.  
Notice that 
$$
a_i \leq t_i^\a,\quad L_i \leq L_0 t_i^{-\b}, \quad 2 C_r^* t_i^\k \leq 1
$$
where the first two inequalities is nothing but induction hypothesis $(iv)$  and the last condition follows easily from \eqref{t0_3} since $t_i < t_0$. Then after a computation similar to the case when $i = 0$, we obtain
\gan
a_{i+1}\leq \frac{1}{2}( t_i^{r-s-1/2}t_i^{-\b}t_i^{-\k-\g}+t_i^{2\a}t_i^{-1/2} t_i^{-\k})\leq t_i^{\a d}= t_{i+1}^\a,\\
L_{i+1}\leq t_i^{-1/2}
 t_i^{-\k}t_i^{-\b}L_0\leq t_i^{-\b d}L_0=  t_{i+1}^{-\b}L_0,
\end{gather*}
where we have assumed
\ga
\a d  +\b < r-s-1/2-\k-\g , \label{constraint_1}\\
\a(2-d)>1/2+\k, \quad \a>0,   \label{constraint_2}\\
\b(d-1)>1/2+\k, \quad \b>0.   \label{constraint_3}
\end{gather}
Note that the first and third constraints are more restrictive than the ones for $i = 0$.

\medskip

Before proceeding to the proof of $(iv)$, we briefly discuss how the parameters $\a,\b,\g$ etc can be chosen to satsify condition \eqref{r-s}. 

Let $\xi = r-s-1/2$. Let $\mathcal{D}(\xi,d,\k,\g) \subset \R^2$ be the set of $(\a,\b)$ such that  \eqref{constraint_1}, \eqref{constraint_2} and \eqref{constraint_3} are satisfied. We must determine the values of $\xi,d, \k, \g$ so that $\mathcal{D}(\xi,d, \k, \g)$ 
is nonempty.

We readily notice that  $1<d<2, \a>1/2+\k, \b>1/2+\k$ and $r> \frac{7}{2}$ since $s > 2$. 
We consider the limiting domain for fixed $\xi, d$ and  $\k= \g = 0$
\[
\mathcal{D}(\xi,d, 0, 0) =\left \{(\a,\b)\in \R^2 :  \a d+\b < \xi, \,\, \a(2-d)>\frac{1}{2}, \,\, \b(d-1)>\frac{1}{2}\right\}.
\]

We first determine condition on $\xi,d$ so that $ \mathcal D(\xi,d, 0, 0)$ is non empty. By the defining equations of $ \mathcal D(\xi,d, 0, 0)$,  it is non empty if and only if
\eq{}
\xi >p(d), \quad p(d):=\frac{d }{2(2-d )}+\frac{1}{2(d -1)}, \quad 1 < d < 2.
\eeq
Note that on interval $(1,2)$, $p$ is a strictly convex function which attains minimum value $p(\sqrt2)=\sqrt2+1$. This implies that
$$
r-s-\frac{1}{2} > p(\sqrt{2}) = \sqrt{2} + 1.
$$
Therefore, we obtain the minimum smoothness requirement for our complex structure
$$
r > s + 
\f{3}{2} + \sqrt{2} > 
\frac{7}{2} + \sqrt{2}.
$$
Notice that $\mathcal{D} (\xi, d, \k, \g)$ is still nonempty for sufficiently small $\k, \g$.
Consequently, we have found a set of values $\a,\b, d, \k, \g$ so that the constraints are satisfied. However, we remark here that our goal is to obtain the convergence of $\znorm{A_j}_{D_j, \ell}$ where $s\leq \ell \leq r$ for as large $\ell$ as possible. To achieve this, we need to optimize our choice of the constants $\a,\b, d, \k, \g$ together with $s$. This will be done in the next section.

\medskip 

$(iv)$ The case when $A \in C^\infty$ needs an additional estimate.  We still keep all previous assumptions. In particular, $r, s$ are fixed finite numbers.  Thus we have $(i), (ii), (iii)$. 
Recall that in Section \ref{four term estimate}, we constructed the smoothing operator $S_t$ by choosing $L = 3$ in \eqref{smoothing_op} if $A\in C^\infty$.  Let $M_{i} \defeq \znorm{A_{i}}_{m}$.

Since $D_{i}$ is strictly pseudoconvex, it follows from \eqref{correctionest} that 
$$
\znorm{f_{i}}_{m+\frac{1}{2}} \leq C_m M_{i}.
$$
Since \eqref{ini_condition2} holds for $f_i$,  it follows from \eqref{High_inf} that
$$
M_{i+1}\leq C^*_m\cdot (t_i^{-1/2}M_i).
$$

We would like to show \eqref{MiM0}, i.e., there exist some $\eta = \eta(d)$ and $N = N(m,d)$ such that for all $i > N$, we have
$$
M_i \leq M_N t_i^{-\eta}.
$$
Note that this holds trivially for $i=N$. 

Let $N = N(m, d)\in\N$ be sufficiently large so that for all $i > N$,
$$
 C^*_m t_i^{\l} \leq 1.
$$
Then we have an estimate which is almost identical to the estimate of $L_{i+1}$ 
for $i > N(m,d)$,
\ga
\label{MiM0}
M_{i+1} \leq t_i^{-\l} t_i^{-1/2} M_i\leq  t_i^{-\l-1/2-\eta}M_N \leq t_i^{-\eta d} M_N = t_{i+1}^{-\eta} M_N 
\end{gather}
where we have fixed an $\eta$ satisfying
$$ 
\eta(d - 1) > 1/2 + \l.
\qquad
\qedhere
$$ 
\end{proof}
 
\section{Optimal Regularity and Convergence of iteration}\label{convergence}
\setcounter{thm}{0}\setcounter{equation}{0}

Let us first explain what we mean by optimal regularity. Here we will use the interpolation  methods in Moser [19] and Webster [27]. 

Let $2 < s < 3$ and $s + \frac{3}{2} + \sqrt{2} < r < \infty$.  Assume that the given initial integrable almost complex structure $X_0 = \dbar + A_0 \d$ is in $\Lambda^r(\ov{D_0})$. 
Moreover, we assume that conditions \eqref{first-init} and \eqref{missed} from Proposition \ref{iteration} are satisfied. That is, 
\eq{copy8}
\znorm{A_0}_{\ov{D_0},s}\leq t_0^\a, \quad \znorm{A_0}_{\ov{D_0}, r} \leq t_0^{-\g} 
\eeq
for some $\a, \g, t_0$ satisfying the requirements in Proposition \ref{iteration}.

By convexity of H\"older-Zygmund norms and \eqref{induction}, we can control the intermediate derivatives $\ell=(1-\th)s+\th r$ for $0< \th < 1$,
\eq{conv}
\znorm{A_{j+1}}_{D_{j+1}, \ell}\leq C_r\znorm{A_{j+1}}_{D_{j+1}, s}^{1-\th}\znorm{A_{j+1}}_{D_{j+1}, r}^\th\leq C_r t_{j+1}^{(1-\th)\a-\th\b}
\eeq
where $j\in \Z^+$. To achieve the convergence of $\znorm{A_{j+1}}_{D_{j+1}, \ell}$, we need $(1-\th)\a>\th\b$. Therefore, $0\leq \th <\frac{\a}{\a+\b}<1$. Let $\theta_0= \frac{\a}{\a+\b}$ and we would like to maximize
\eq{optimize}
\ell(\a,\b,r,s,d)=s+\theta_0(r-s)= s+\frac{\a}{\a+\b} (r-s)
\eeq
under the constraints (\ref{constraint_1}), (\ref{constraint_2}) and (\ref{constraint_3}). Notice that we cannot achieve the maximum.

Set  $\xi= r-s-1/2$. Recall from the proof of induction hypothesis $(iv)$ in Proposition \ref{iteration} the following facts.
We have defined
 $\mathcal{D}(\xi,d, \k, \g) \subset \R^2$ to be the set of $(\a,\b)$ such that  \eqref{constraint_1},  \eqref{constraint_2} and \eqref{constraint_3} are satisfied. 
That is, 
$$
\mathcal{D}(\xi,d, \k, \g) = \{(\a,\b)\in \R^2 :  \a d+\b+\k+\g < \xi, \,\, \a(2-d)>\frac{1}{2}+\k, \,\, \b(d-1)>\frac{1}{2}+\k\}.
$$
We consider the limit domain when $\k = \g = 0$,
\[
\mathcal{D}(\xi,d, 0, 0) = \{(\a,\b)\in \R^2 :  \a d+\b < \xi, \,\, \a(2-d)>\frac{1}{2}, \,\, \b(d-1)>\frac{1}{2}\}
\]
and $$
\quad p(d):=\frac{d }{2(2-d )}+\frac{1}{2(d -1)}, \quad 1 < d < 2,
$$
which is a strictly convex function on $(1, 2)$
and attains minimum value $p(\sqrt2)=\sqrt2+1$. It is clear that $\mathcal{D}(\xi, d, 0, 0)$ is nonempty if and only if 
\eq{condition}
\xi >p(d).
\eeq
It is also easy to see that the closure of $\mathcal{D}(\xi,d, 0, 0)$ is  \[
\overline{
\mathcal{D}(\xi,d, 0, 0)} = \left\{(\a,\b)\in \R^2 : \a d+\b \leq \xi,\,\, \a(2-d)\geq \frac{1}{2}, \,\, \b(d-1)\geq \frac{1}{2} \right\}.
\]
We write  \[
\ell(\a,\b,r,s,d)=r-\widetilde \ell(\a,\b,\xi,d), \quad \widetilde \ell(\a,\b,\xi,d) \defeq\frac{\xi+\frac{1}{2}}{\frac{\a}{\b}+1}.
\]
On $\overline{\mathcal D(\xi, d, 0, 0)}$, let us minimize
$$\widetilde \ell(\a,\b,\xi,d)= \frac{\xi+\frac{1}{2}}{\frac{\a}{\b}+1}. $$
Since $\overline{\mathcal D(\xi, d, 0, 0)}$ is a compact set, the continuous function  $\widetilde \ell$ achieves minimum at some point $(\a_\infty,\b_\infty)\in \overline{\mathcal D(\xi, d, 0, 0)}$. It is clear that $\b_\infty$ takes the smallest possible value and $\a_\infty$ takes the largest possible value. Thus,
$$
\b_\infty=\frac{1}{2(d-1)}, \quad
\a_\infty=\frac{\xi-\b_\infty}{d}.
$$
Thus, we have
\begin{align}
\widetilde \ell(\a_\infty,\b_\infty,\xi,d)&=\frac{\xi +\frac{1}{2}}{
2\frac{d-1}{d}(\xi-\b_\infty)+1}\\
& = \frac{d(\xi + \frac{1}{2})}{2(d-1)\xi + (d-1)} \\
&= \frac{1}{2}(1+\frac{1}{d-1}).
\nonumber
\end{align}

Fix any $r > 3 + \sqrt{2}$ and $\xi = r - s - \frac{1}{2}$.  In order to achieve the optimal regularity, we would like to find the largest $d$ that satisfies \eqref{condition}. Since $p(d)$ is a strictly convex function, the largest $d$ for a given $\xi$ is therefore achieved by the larger solution $d(\xi)$ where $p(d(\xi)) = \xi$.  By a simple computation, we have \eq{formula_d}
d(\xi)= \frac{1+3\xi + \sqrt{\xi^2-2\xi-1}}{1+2\xi} = 1 + \frac{1}{2 + \frac{1}{\xi}} + \frac{\sqrt{1-\frac{2}{\xi}-\frac{1}{\xi^2}}}{2 + \frac{1}{\xi}}.
\eeq
It follows that $d(\xi)$ is an increasing function in $\xi$. In particular, it approaches  to $2^-$
and $\widetilde \ell(\a_\infty,\b_\infty,\xi,d)$ tends to $1$ as $\xi$ and hence $r$ tend to $+\infty$. 

We observe that for any given $r > 3 + \sqrt{2}$,
$d(\xi)$ is maximized when $\xi=r-s-1/2$ takes the 
maximum value 
$$
\xi_\infty \defeq r - \frac{5}{2}
$$  
for $s=2$. Let $d_\infty = d(\xi_\infty)$. Then $\widetilde \ell$ achieves its minimum 
$\widetilde\ell_\infty$
at $(\a_\infty,\b_\infty,\xi_\infty,d_\infty)\in \mathcal{D}(0, 0)$ where $\mathcal{D}(0, 0) = \cup_{(\xi, d)} D(\xi,d, 0, 0)$ for all $(\xi,d)$ satisfies the condition \eqref{condition}.

In particular when
$$r= 5,$$ a simple computation shows that
$$ \xi_\infty = 5 - 5/2= 5/2, \quad d_\infty = d(\xi_\infty)= 3/2,
\quad \widetilde\ell_\infty=3/2.$$
By monotonicity of $d(\xi)$, this implies that, when 
 $r-3> s>2$ we can take $\xi=r-s_*$ for a suitable $2<s_*<s$,  the above values then satisfy 
 $$  
d_\infty\geq 3/2 +c_0, \quad  \widetilde \ell_\infty
 \leq 3/2 - \widetilde c_0
$$
for sufficiently small $c_0, \widetilde c_0>0$.
 
In summary, we have proved the following:  
Let $\mathcal{D}(\k, \g) = \cup_{\xi, d}\mathcal{D}(\xi, d, \k, \g) \subset \R^4$ be the set of $(\a,\b,\xi, d)$ satisfying \eqref{constraint_1}, \eqref{constraint_2} and \eqref{constraint_3}. Let  $r > 5$. We choose $2<s_*
<
s
$ such that 
$$
\xi_* = r - s_* - 1/2 > 5/2.
$$ Consequently,
$$
d(\xi_*) > 3/2.
$$
Then we choose $3/2 < d_* <  d(\xi_*)$ so that $\xi_* > p(d_*)$. This ensures that $\mathcal{D}(\xi_*, d_*, 0, 0)$ is nonempty.
Therefore, there exist parameters $\alpha_*, \beta_*, \xi_*, d_*$ so that  $\mathcal{D}(0, 0)$ is nonempty.  Notice that $\k, \g$ can be arbitrarily close to $0$. Then for sufficiently small $\k, \g$, we have $(\a_*, \b_*,\xi_*,d_*) \in \mathcal{D}(\k, \g)$. Moreover, since $d_* > 3/2$, 
\eq{const4_t_0}
\widetilde \ell(\a_*, \b_*,\xi_*,d_*) = \frac{1}{2}(1+\frac{1}{d_*-1})
\leq 3/2 - 2c^*
\eeq
for sufficiently small $c^* > 0$.  It is clear that $c^*$  depends only on the choice of $\xi_*, d_*$ specified above. Note that \eqref{const4_t_0} implies
$$
\ell_* = \ell(\a_*, \b_*,\xi_*,d_*) 
\geq r-\frac{3}{2} + 2c^*.
$$
Let $\ell = 
s_* + \th(r - s_*)$ and choose $0\leq \th < \frac{\a_*}{\a_* + \b_*}$ such that
$$
\ell > \ell_*-c^{*}. 
$$
Suppose that \eqref{copy8} is satisfied for the above choices of $\a_*$ and  $\g$. Then 
\eq{weaker}
\znorm{A_0}_{D_0,s_*}\leq \znorm{A_0}_{D_0, s} \leq t_0^{\a_*}, \quad \znorm{A_0}_{D_0,r}\leq t_0^{-\g}.
\eeq
Consequently, we know from \eqref{conv} that
\eq{Al_conv}
\znorm{A_j}_{D_j,\ell} \leq 
C_r t_j^{a_\ell}, \quad a_\ell  > 0
\eeq  
where $a_\ell = (1-\th)\a_* - \th\b_*$ for 
the above $\ell$ that  satisfies
\eq{room}
\ell > r-\frac{3}{2} + c^*.
\eeq

Finally, we are ready to show the convergence of the sequence $\widetilde F_j = F_{j-1}\circ\cdots\circ F_0$  to some embedding $F$ on $\ov{D_0}$ in $\Lambda^{r-1}(\ov{D_0})$ for any $r>5$ (including $r=\infty$).  Moreover, $F$ maps the perturbed almost complex structure to the standard one and $F (D_0) \defeq D$ is still a $C^2$ strictly pseudoconvex domain in $\C^n$.

\pr{ConvMapping}
Let $5 < r < \infty$ and $2 < s < r - 3$.  Let $D_0$ be a $C^2$ strictly pseudoconvex domain in $\C^n$ and $X_{\overline\a} = \d_{\overline\a} + A_{\overline\a}^\b \d_{\b} \in \Lambda^r(\overline{D_0})$, $\a=1,\cdots, n$ be a formally integrable almost complex structure on $D_0$. There exist constants $\a > 1/2$, 
$\g \in (0, 1)$ and $\hat t_0 \in (0, 1/2)$ such that if
\ga\label{cond8.1}
\znorm{A}_{D_0,s}\leq t_0^\a \quad\text{and}\,\,\quad  \znorm{A}_{D_0, r}\leq t_0^{-\g},
\end{gather}
where $0 < t_0 \leq \hat t_0$, then the following statements are true.
\bppp
\item There is a sequence of mappings $\widetilde F_j$ converge to some embedding $F :  \ov{D_0}\to \C^n$ in $\Lambda^{\ell+\frac{1}{2}}(\ov{D_0})$  for any $0\leq \ell\leq r-3/2 + c^*$. Here $c^{*}>0$ is the same constant appeared in \eqref{room}. In particular,  $F\in C^{r-1}(\ov{D_0})$. 
\item  If in addition $A \in C^\infty(\ov{D_0})$, then $F \in C^\infty(\ov {D_0})$
under \rea{cond8.1} and the weaker condition  $r-3/2-\sqrt2>s>2$. 
\item   $F_*(X_{\ov\a})$ are in the span of $\d_{\ov 1}, \dots, \d_{\ov n}$ and $F(\ov{D_0})$ is strictly pseudoconvex. 
\item 
 The
$
\de_r(D_0):=\hat t_0^\a
$
is lower stable under a small $C^2$ perturbation of $\d D_0$.

\eppp
\epr
\begin{proof}
We may assume that $2<s<3$.

$(i)$  Let us first determine the constants $\a, \g$ and $\hat t_0$. Recall $\hat t_0$ from Proposition \ref{iteration} where
$$
\hat t_0:=\hat t_0(r,s,\a,\b,d,\k, C_2^*,  C_{s}^{**}, C_r^*, \e(D_0), \de
(\rho_0)) \in (0,1/2).
$$
Notice that $r, C_2^*,  C_{s}^{**}, C_r^*, \e(D_0), \de
(\rho_0)$ have been specified before the proof of Proposition~\ref{iteration}.  Choose $\k, \g$ and $(\a,\b,d, \xi)\in \mathcal{D}(\k, \g)$ such that \eqref{const4_t_0} is satisfied.  
Then $\hat t_0$ is determined by the constraints \eqref{t0_1}, \eqref{t0_2}, \eqref{t0_3} and \eqref{t0_4} appeared in 
Proposition~\ref{iteration}.   These constraints will be written down explicitly when proving the stability of $\de_r(D)$ in $(iv)$.

By assumption, for $0 < t_0 \leq \hat t_0$, we have
\eq{}
 \znorm{A_0}_{D_0,s}\leq t_0^\a, \quad \znorm{A_0}_{D_0, r} \leq t_0^{-\g}.
\eeq
Consequently, Proposition \ref{iteration} is now valid for such choices of $\hat t_0$ and $A_0$. Moreover, 
$$
\znorm{f_j}_{D_j, \ell+\frac{1}{2}}\leq C_\ell |A_j|_{D_j, \ell} \leq C_\ell t_j^{a_\ell},\quad a_\ell > 0
$$ 
according to \eqref{correctionest} and \eqref{Al_conv}. 

Consider the composition $\widetilde{F}_{j+1} = F_j \circ F_{j-1}\circ \cdots \circ F_0$ where $F_j = I + f_j$ for $j\geq 0$.
Let $\ell = r-3/2 + c^*$. We use Lemma \ref{chain_rule} to estimate 
\begin{align}\label{cauchy_seq}
\znorm{\widetilde{F}_{j+1}-\widetilde{F}_j}_{D_0, \ell+\frac{1}{2}} &= \znorm{f_{j} \circ F_{j-1}\circ\cdots\circ F_0}_{D_0, \ell + \f{1}{2}}
\\&\nonumber\leq  (C_\ell)^j\left\{\znorm{f_j}_{\ell + \f{1}{2}}+\sum_i \left(\norm{f_j}_2\znorm{f_i}_{\ell + \f{1}{2}}+\znorm{f_j}_{\ell+\f{1}{2}}\znorm{f_i}_s \right)\right\}
\\&\nonumber\leq (C_\ell)^j  C \znorm{f_j}_{\ell+\f{1}{2}}\leq 
C_r^j t_j^{a_\ell},\quad \text{for some $a_\ell >0$.}
\end{align}
This shows that $\znorm{\widetilde{F}_{j+1}-\widetilde{F}_j}_{D_0,\ell+\frac{1}{2}}$ is a 
Cauchy sequence since $\sum_j C_\ell^jt_j^{a_\ell}$ clearly converges. We denote the limit mapping by $F$.

$(ii)$ The case $r = \infty$  needs a separate argument because the construction of smoothing operator $S_t$ depends on the $r$ in the finite smooth case.

We are going to use \eqref{f-is}, \eqref{MMN} from Proposition \ref{iteration} and convexity \eqref{zconvexity} without the optimization process. Indeed, let 
$$
\eta(d), \quad N(m ,d) \in \N
$$
be the same constants from $(v)$ in Proposition \ref{iteration}.
Then for $\ell=(1-\th)s + \th m$,  $j > N(m, d)$, we have
\eq{smooth_conv}
\znorm{f_{j+1}}_{D_{j+1}, \ell+\frac{1}{2}}\leq C_m\znorm{f_{j+1}}_{D_{j+1}, s+\frac{1}{2}}^{1-\th}\znorm{f_{j+1}}_{D_{j+1}, m+\frac{1}{2}}^\th \leq C_m' t_{j+1}^{(1-\th)\a-\th\eta}.
\eeq
We have the convergence provided that $(1-\th)\a - \th \eta > 0$, which can be achieved by choosing any $0 < \th < \frac{\a}{\a+\eta}< 1$. For instance, we can choose $\th = \frac{\a}{2(\a+\eta)}$. 

Then we can apply the same argument \eqref{cauchy_seq} to see that $F\in \Lambda^{\ell+1/2}({\ov{D_0}})$ where 
$$
\ell + 1/2 = s + \frac{\a}{2(\a + \eta)}(m -s)+ 1/2 > r_0 - 1- 1/2
$$ for $m$ sufficiently large.
Since $m$ can be arbitrarily large and $\th$ is independent of $m$,  we conclude that $F\in\Lambda^{\ell}(\ov{D_0})$ for all $\ell$. This implies that $F\in C^\infty(\ov{D_0})$.

$(iii)$ By part $(iv)$ in Proposition \ref{iteration}, we see that $F$ transforms the formally integrable almost complex structure into the standard complex structure. By $(ii)$ in Proposition \ref{iteration}, we know that $D \defeq F(D_0)$ is a strictly pseudoconvex domain with $C^2$ boundary in $\C^n$.

Finally, we show that $F$ is a diffeomorphism. Since $F$ is
$\Lambda^{r-1}$, it suffices to check the Jacobian of $F(x)$ for $x\in D_0$.
\eq{}
\begin{aligned}
\znorm{D F - I}_{D_0, 0} \leq \sum_{j=0}^\infty \znorm{D \widetilde{F}_{j+1}-D \widetilde{F}_j}_{D_0, 0}&\leq \sum_{j=0}^\infty \znorm{\widetilde F_{j+1}-\widetilde F_j}_{D_0, 1}\leq \frac{1}{2},
\end{aligned}
\eeq
where the last inequality follows from $(\ref{cauchy_seq})$.

$(iv)$ Let $\e(D_0)$ be the size of second order perturbation of $\rho_0$ such that we have {\emph{upper stablility}} of $C_2^*(\rho_0), C_s^*(\rho_0), C_s^{**}(\rho_0), C_r^*(\rho_0)$.

Recall that $\hat t_0$ is determined by the constraints  \eqref{t0_1},   \eqref{t0_2}, \eqref{t0_3} and \eqref{t0_4}. More specifically,
\eq{t0_final}
\hat t_0 \leq \min\left\{\left(\frac{1}{C_s^{**}}\right)^{\f{2} {2\a - 1}}, \left(\frac{
\de
(\rho_0)
}{C_2^*}\right)^{\frac{1}{\a}}, \left(\frac{1}{2C^*_r}\right)^{\f{1}{\k_0}}, \left(\frac{1}{4}\right)^{\frac{2}{d-1}}  \right\}.
\eeq
Here $C_2^*(\rho_0), C_s^{**}(\rho_0), C_r^*(\rho_0)$ are upper stable constants and 
$\de  
(\rho_0)$ 
 is given by \eqref{delD0} and satisfies the properties in Lemma~\ref{iter_cont}.
See also Section~\ref{sec Settings} for details on upper stability.

Let us replace $\delta 
(\rho_0)$ by a smaller quantity $\delta^*(D_0)$ defined by
$$
\delta^*(D_0)  :=
\min\left\{ \f{\e(D_0)}{4 
C''}, 
\de\left(
\rho_0, \frac{\e(D_0)}{2}, 2\right)\right\},
$$
where 
$C''$ is an absolute constant determined later.  Then we 
have $0 < t_0 \leq \hat t_0$ for
$$
 t_0
(D_0) 
:= \min\left\{\left(\frac{1}{C_s^{**}}\right)^{\f{2} {2\a - 1}}, \left(\frac{
\de^*(D_0)
}{C_2^*}\right)^{\frac{1}{\a}}, \left(\frac{1}{2C^*_r}\right)^{\f{1}{\k}}, \left(\frac{1}{4}\right)^{\frac{2}{d-1}}  \right\}.
$$
Define $\de_r(D_0) = t_0^\a(D_0)$, that is that
\eq{de_r_final}
\de_r(D_0) 
:= \min\left\{\left(\frac{1}{C_s^{**}}\right)^{\f{2} {2\a - 1}}, \left(\frac{
\de^*(D_0)
}{C_2^*}\right)^{\frac{1}{\a}}, \left(\frac{1}{2C^*_r}\right)^{\f{1}{\k}}, \left(\frac{1}{4}\right)^{\frac{2}{d-1}}  \right\}^\a.
\eeq

Finally, we show that $\de_r(D_0)$ is {\emph{lower stable}} under $C^2$ perturbation.  
Let $(\widetilde D, \widetilde X)$ be a pair of strictly pseudoconvex domains and formally integrable complex structures that satisfy the conditions of the proposition. Let $\widetilde D = \{z\in\U: \tilde \rho<0\}$ and $\de_r(\widetilde D)$ be the corresponding stability constant to be determined. 

Recall that we say $\de_r(D_0)$ is  lower stable under $C^2$ perturbation of $\rho_0$ if the following holds.  
There exist
\eq{ls_condition}
\e^*
(\rho_0)> 0,\ C(\rho_0) > 0
\eeq
such that if $\norm{\tilde \rho-\rho_0}_{\U,2}\leq 
\e^*(\rho_0)$, then we can choose $\de_r(\widetilde D)$ satisfying
\eq{ls}
\de_r( D_0) \leq C(\rho_0) \de_r(\widetilde  D).
\eeq

We start by choosing
\eq{esrho}
\e^*(\rho_0) := 
\frac{\e(D_0)}{4 C''},
\eeq
where as mentioned above $C''>1$ is an absolute constant to be determined. 

Next, 
for the domain $\widetilde D$ we define
\eq{de_r_D}
\de_r(\widetilde D) 
:= 
\hat t_0^\a(\widetilde D)
:=\min\left\{\left(\frac{1}{C_s^{**}}\right)^{\f{2} {2\a - 1}}, \left(\frac{
\hat \de (\widetilde D)
}{C_2^*}\right)^{\frac{1}{\a}}, \left(\frac{1}{2C^*_r}\right)^{\f{1}{\k}}, \left(\frac{1}{4}\right)^{\frac{2}{d-1}}  \right\}^{\a}
\eeq
where 
$C_2^*, C_s^{**}, C^*_r$ depend on $\widetilde D$, and 
$$
\hat \delta (\widetilde D)
:= \de\left(\tilde\rho, \frac{\e(D_0)}{
2}, 2\right).
$$
Note that the second argument of the last expression  does not depend on $\widetilde D$. 

Notice by definition of upper stability, the reciprocal of a upper stable constant is lower stable. It also follows from definition that  taking 
{\it minimum}
 of lower stable constants or raising to certain fixed
 positive
 power do not change lower stability.

Therefore, in order to prove that $\de_r(D_0)$ is lower stable, it suffices to show that 
if
the initial domain $\widetilde D$ has a defining function $\tilde\rho$ 
satisfying $$\norm{\tilde\rho-\rho_0}_{\U,2}<\e^*(D_0)$$
then the following hold:
\begin{enumerate}
\item Proposition \ref{iteration} holds for the pair $(\widetilde D, \widetilde X)$ with $\e(\widetilde D)$, $\de(\tilde\rho, \e(\widetilde D), 2)$ and $\hat t_0$ being replaced by $\e(D_0) / 2$, $\hat \de(\widetilde D)= \de(\tilde\rho, \e(D_0)/2 , 2 )
$ and $\hat \de (\widetilde D)^{{1}/{\a}}$ while the rest of the statements remain unchanged.  
\item Let $\tilde\rho_1,\tilde\rho_2,\dots, $ be the sequence of defining functions for domains obtained in previous assertion 
for the initial domain $\widetilde D$ with defining function $\tilde\rho$ 
satisfying $\norm{\tilde\rho-\rho_0}_{\U,2}<\e^*(D_0)$. Then 
$$
\norm{\tilde\rho-\rho_0}_{\U,2}<\e(D_0), \quad 
\norm{\tilde\rho_j - 
\rho_0}_{\U, 2}\leq \e(
D_0).
$$
Moreover, we get an embedding for $(\widetilde D,\widetilde  X)$ with the given $\de_r(\widetilde D)$ in \eqref{de_r_D}. 
\item We can use the same set of parameters $\a, d, \k$ for initial defining functions $\rho_0,  \tilde\rho$.  
\item Finally, we have $
\de^*(D_0)\leq \hat\de(\widetilde D)$, i.e. 
\eq{star-hat-de}
\de(\rho_0, \frac{\e(D_0)}{
4}, 2) \leq  \de(\tilde\rho, \frac{\e(D_0)}{2}, 2).
\eeq
In other words, $\de(\rho_0, \frac{\e(D_0)}{
4}, 2)$ 
 fulfills the requirements for $\de(\tilde\rho, \frac{\e(D_0)}{2}, 2)$. 
Notice here the difference in domains and scale of perturbation. Clearly, \eqref{de_r_final}, \eqref{de_r_D} and \eqref{star-hat-de} imply immediately \eqref{ls}.
\end{enumerate}

These assertions follow in principle  from the proofs. However,  let us point out how to achieve them. 

To see the first assertion, we only need to argue that we can replace $\e(\widetilde D)$ by $\e(D_0) / 2$.  The rest of the changes are obvious. One way to see this is to give a precise estimate of how $\e(\widetilde D)$ depends on the defining function. However, we give an alternative argument based on the proof of Proposition \ref{iteration} itself. 

Notice that the function  $\e(\widetilde D)$,
 replacing $\e(D_0)$
 in Proposition \ref{iteration}, is two-folds. On the one hand, we need to control the Levi forms of sequence of domains. On the other hand, in order to get convergence, we need to make sure that we
 can use the same coefficients $C_2^*, C_s^*, C_s^{**}, C_r^*$ in the estimates during the iteration despite that the domains $\widetilde D_j$ are changing with $\widetilde D$.  

Let $f_j$ be the sequence of corrections in Proposition~\ref{iteration} 
for $\widetilde D$.  Then they are guaranteed to satisfy the requirements in Lemma~\ref{iter_cont} when \eqref{t0_2} and \eqref{t0_4} are satisfied. These two conditions are achieved by our choice of  $\hat t_0$. Here Lemma~\ref{iter_cont} is applied to $\tilde\rho, \hat \de(\widetilde D)$ and the sequence $f_j$.

Consequently, by Lemma~\ref{iter_cont} 
applied to $\widetilde D$ and $\e=\e(\widetilde D)$, we have \eq{small}
\norm{\tilde\rho_j - \rho_0}_{\U, 2} = \norm{\tilde\rho_j - \tilde\rho + \tilde\rho - \rho_0}_{\U, 2} \leq 
\f{\e( D_0)}{2}+\e^*(D_0)<
\e(D_0),
\eeq
provided we can verify 
\eq{tildesmall}
\norm{\tilde\rho_j - \tilde\rho}_{\U,2}<\f{\e( D_0)}{2}.
\eeq
Therefore, the sequence of domains defined by $\tilde\rho_j$ are strictly pseudoconvex provided that $\e(D_0)$ is sufficiently small.
Note also that by \eqref{small}, we have
$$
C_2^*(\tilde\rho_j) \leq C(\rho_0) C_2^*(\rho_0).
$$
Similar estimates hold for $C_s^*, C_s^{**}, C_r^*$.

Thus we have verified the second assertion. 
However, strictly speaking, one should 
check \eqref{small}
by induction in $j \in \N$ as we did in the original proof of Proposition \ref{iteration}. However, since the ideas are the same, we leave the detail to interested reader. 
  We will  verify  \eqref{tildesmall} below and  show how  $\hat\delta(\widetilde D), \delta_r(\tilde D)$ are chosen.

For the third assertion, we note that the choices of $\a , d, \k$ depend only on  the constraints \eqref{constraint_1},  \eqref{constraint_2}, \eqref{constraint_3} and the optimization process.  Therefore, they can be chosen uniformly.

Finally, we will show that  $\delta^*(D_0)\leq\hat\de(\widetilde D)$, i.e. 
$$
\de(\rho_0, \frac{\e(D_0)}{
4}, 2) 
\leq \de(\tilde\rho, \frac{\e(D_0)}{2}, 2),
$$ 
which amounts to verifying that  $\de(\rho_0, \frac{\e(D_0)}{
4}, 2)$
 fulfills the requirements for $\de(\tilde\rho, \frac{\e(D_0)}{2}, 2)$.

Indeed,  let $F_j = I + f_j$ be the sequence of diffeomorphisms that satisfy the condition of Lemma~\ref{iter_cont}
in which $D_0,\delta$ are replaced by $\widetilde D, \delta^*(
D_0)$.  Thus, we can assume
$$
\norm{f_j}_{B_0,2}\leq \frac{\delta^*( D_0)}{(j+1)^2}.
$$
Let $\tilde F_j = I + \tilde f_j = F_j \circ\cdots F_0$ and $\tilde G_j = I + \tilde g_j = F_{j}^{-1}\circ\cdots F_0^{-1}$. Let $|K| = |K'| = 2$.  Set $\rho_j=\rho\circ \tilde G_j$,  $\tilde\rho_j=\tilde\rho\circ G_j$, $\rho'=\tilde\rho-\rho$ and $\rho_j'=\rho'\circ \tilde G_j$. We have 
$$
\norm{\tilde g_j}_{\U,2} \leq C_2 \de^*(D_0)\leq C_2
$$ 
and
\aln
\norm{\tilde\rho_j-\tilde\rho}_{\U,2}&\leq \norm{\rho_j-\rho}_{\U,2}+\norm{\rho'\circ\tilde G_j}_{\U,2}+\norm{\rho'}_{\U,2}\\
&\leq 
\f{\e(D_0)}{4}+(1+C'_2\norm{\tilde g_j}_{\U,2})^2\norm{\rho'}_{\U,2}
\leq \f{\e(D_0)}{4}+C_2''\e^*(D_0)\leq\f{\e(D_0)}{2}.\end{align*}
Therefore,  we have
$$
\norm{\tilde\rho_j -\tilde \rho}_{\U, 2} \leq  \e(D_0) / 2, \quad
\norm{\tilde\rho_j-\rho_0}_{\U,2}<\e(D_0).$$
This completes the proof of  assertion $(4)$ and also assertion $(2)$. 
 
Having verified all four assertions,  we conclude that $\de_r(D_0), \delta_r(\widetilde D)$ defined by \eqref{de_r_final} and \eqref{de_r_D}, are lower stable at $D_0$ under small $C^2$ perturbation.  This completes the proof.  
\end{proof}

\newcommand{\doi}[1]{\href{http //dx.doi.org/#1}{#1}}
\newcommand{\arxiv}[1]{\href{https //arxiv.org/pdf/#1}{arXiv #1}}

\def\MR#1{\relax\ifhmode\unskip\spacefactor3000 \space\fi%
\href{http //www.ams.org/mathscinet-getitem?mr=#1}{MR#1}}

\bibliographystyle{plain}
\begin{bibdiv}
\begin{biblist}

\bib{MR959270}{article}{
      author={Catlin, D.},
       title={A {N}ewlander-{N}irenberg theorem for manifolds with boundary},
        date={1988},
        ISSN={0026-2285},
     journal={Michigan Math. J.},
      volume={35},
      number={2},
       pages={233\ndash 240},
         url={https://doi-org.ezproxy.library.wisc.edu/10.1307/mmj/1029003750},
      review={\MR{959270}},
}

\bib{MR3961327}{article}{
      author={Gong, X.},
       title={A Frobenius-Nirenberg theorem with parameter},
        date={2020},
     journal={ J. Reine Angew. Math.},
       volume={759},
       pages={101\ndash 159},
      review={\MR{3961327}},

}

\bib{MR3961327}{article}{
      author={Gong, X.},
       title={H\"{o}lder estimates for homotopy operators on strictly
  pseudoconvex domains with {$C^2$} boundary},
        date={2019},
        ISSN={0025-5831},
     journal={Math. Ann.},
      volume={374},
      number={1-2},
       pages={841\ndash 880},
  url={https://doi-org.ezproxy.library.wisc.edu/10.1007/s00208-018-1693-9},
      review={\MR{3961327}},
}

\bib{MR2742034}{article}{
      author={Gong, X.},
      author={Webster, S.M.},
       title={Regularity for the {CR} vector bundle problem {I}},
        date={2010},
        ISSN={1558-8599},
     journal={Pure Appl. Math. Q.},
      volume={6},
      number={4, Special Issue: In honor of Joseph J. Kohn. Part 2},
       pages={983\ndash 998},
  url={https://doi-org.ezproxy.library.wisc.edu/10.4310/PAMQ.2010.v6.n4.a1},
      review={\MR{2742034}},
}

\bib{MR2829316}{article}{
      author={Gong, X.},
      author={Webster, S.M.},
       title={Regularity for the {CR} vector bundle problem {II}},
        date={2011},
        ISSN={0391-173X},
     journal={Ann. Sc. Norm. Super. Pisa Cl. Sci. (5)},
      volume={10},
      number={1},
       pages={129\ndash 191},
      review={\MR{2829316}},
}

\bib{MR2868966}{article}{
      author={Gong, X.},
      author={Webster, S.M.},
       title={Regularity in the local {CR} embedding problem},
        date={2012},
        ISSN={1050-6926},
     journal={J. Geom. Anal.},
      volume={22},
      number={1},
       pages={261\ndash 293},
  url={https://doi-org.ezproxy.library.wisc.edu/10.1007/s12220-010-9192-6},
      review={\MR{2868966}},
}

\bib{MR0477158}{article}{
      author={Hamilton, R.S.},
       title={Deformation of complex structures on manifolds with boundary.
  {I}. {T}he stable case},
        date={1977},
        ISSN={0022-040X},
     journal={J. Differential Geometry},
      volume={12},
      number={1},
       pages={1\ndash 45},
  url={http://projecteuclid.org.ezproxy.library.wisc.edu/euclid.jdg/1214433844},
      review={\MR{0477158}},
}

\bib{MR594711}{article}{
      author={Hamilton, R.S.},
       title={Deformation of complex structures on manifolds with boundary.
  {II}. {F}amilies of noncoercive boundary value problems},
        date={1979},
        ISSN={0022-040X},
     journal={J. Differential Geom.},
      volume={14},
      number={3},
       pages={409\ndash 473 (1980)},
         url={http://projecteuclid.org/euclid.jdg/1214435106},
      review={\MR{594711}},
}

\bib{MR656198}{article}{
      author={Hamilton, R.S.},
       title={The inverse function theorem of {N}ash and {M}oser},
        date={1982},
        ISSN={0273-0979},
     journal={Bull. Amer. Math. Soc. (N.S.)},
      volume={7},
      number={1},
       pages={65\ndash 222},
         url={https://doi.org/10.1090/S0273-0979-1982-15004-2},
      review={\MR{656198}},
}

\bib{MR980299}{article}{
      author={Hanges, N.},
      author={Jacobowitz, H.},
       title={A remark on almost complex structures with boundary},
        date={1989},
        ISSN={0002-9327},
     journal={Amer. J. Math.},
      volume={111},
      number={1},
       pages={53\ndash 64},
         url={https://doi-org.ezproxy.library.wisc.edu/10.2307/2374479},
      review={\MR{980299}},
}

\bib{MR1128593}{incollection}{
      author={Hill, C.D.},
       title={Counterexamples to {N}ewlander-{N}irenberg up to the boundary},
        date={1991},
   booktitle={Several complex variables and complex geometry, {P}art 3 ({S}anta
  {C}ruz, {CA}, 1989)},
      series={Proc. Sympos. Pure Math.},
      volume={52},
   publisher={Amer. Math. Soc., Providence, RI},
       pages={191\ndash 197},
      review={\MR{1128593}},
}

\bib{MR1045639}{book}{
      author={H\"ormander, L.},
       title={An introduction to complex analysis in several variables},
     edition={Third},
      series={North-Holland Mathematical Library},
   publisher={North-Holland Publishing Co., Amsterdam},
        date={1990},
      volume={7},
        ISBN={0-444-88446-7},
      review={\MR{1045639}},
}

\bib{MR0602181}{article}{
      author={H\"{o}rmander, Lars},
       title={The boundary problems of physical geodesy},
        date={1976},
        ISSN={0003-9527},
     journal={Arch. Rational Mech. Anal.},
      volume={62},
      number={1},
       pages={1\ndash 52},
         url={https://doi-org.ezproxy.library.wisc.edu/10.1007/BF00251855},
      review={\MR{0602181}},
}

\bib{kobayashi1996foundations}{book}{
      author={Kobayashi, S.},
      author={Nomizu, K.},
       title={Foundations of differential geometry, volume 2},
      series={A Wiley Publication in Applied Statistics},
   publisher={Wiley},
        date={1996},
        ISBN={9780471157328},
         url={https://books.google.com/books?id=ajN0aD65Ez8C},
}

\bib{MR0208200}{article}{
      author={Kohn, J.J.},
       title={Harmonic integrals on strongly pseudo-convex manifolds. {II}},
        date={1964},
        ISSN={0003-486X},
     journal={Ann. of Math. (2)},
      volume={79},
       pages={450\ndash 472},
         url={https://doi-org.ezproxy.library.wisc.edu/10.2307/1970404},
      review={\MR{0208200}},
}

\bib{MR0253383}{incollection}{
      author={Malgrange, B.},
       title={Sur l'int\'{e}grabilit\'{e} des structures presque-complexes},
        date={1969},
   booktitle={Symposia {M}athematica, {V}ol. {II} ({INDAM}, {R}ome, 1968)},
   publisher={Academic Press, London},
       pages={289\ndash 296},
      review={\MR{0253383}},
}

\bib{MR1671846}{article}{
      author={Michel, J.},
      author={Shaw, M.-C.},
       title={A decomposition problem on weakly pseudoconvex domains},
        date={1999},
        ISSN={0025-5874},
     journal={Math. Z.},
      volume={230},
      number={1},
       pages={1\ndash 19},
         url={https://doi-org.ezproxy.library.wisc.edu/10.1007/PL00004685},
      review={\MR{1671846}},
}

\bib{MR1134587}{article}{
      author={Michel, Joachim},
       title={Integral representations on weakly pseudoconvex domains},
        date={1991},
        ISSN={0025-5874},
     journal={Math. Z.},
      volume={208},
      number={3},
       pages={437\ndash 462},
         url={https://doi-org.ezproxy.library.wisc.edu/10.1007/BF02571538},
      review={\MR{1134587}},
}

\bib{MR0147741}{article}{
      author={Moser, J.},
       title={On invariant curves of area-preserving mappings of an annulus},
        date={1962},
        ISSN={0065-5295},
     journal={Nachr. Akad. Wiss. G\"{o}ttingen Math.-Phys. Kl. II},
      volume={1962},
       pages={1\ndash 20},
      review={\MR{0147741}},
}

\bib{MR88770}{article}{
      author={Newlander, A.},
      author={Nirenberg, L.},
       title={Complex analytic coordinates in almost complex manifolds},
        date={1957},
        ISSN={0003-486X},
     journal={Ann. of Math. (2)},
      volume={65},
       pages={391\ndash 404},
         url={https://doi.org/10.2307/1970051},
      review={\MR{88770}},
}

\bib{MR149505}{article}{
      author={Nijenhuis, A.},
      author={Woolf, W.B.},
       title={Some integration problems in almost-complex and complex
  manifolds},
        date={1963},
        ISSN={0003-486X},
     journal={Ann. of Math. (2)},
      volume={77},
       pages={424\ndash 489},
         url={https://doi.org/10.2307/1970126},
      review={\MR{149505}},
}

\bib{MR1001710}{article}{
      author={Peters, K.},
       title={Uniform estimates for {$\overline\partial$} on the intersection
  of two strictly pseudoconvex {$C^2$}-domains without transversality
  condition},
        date={1989},
        ISSN={0025-5831},
     journal={Math. Ann.},
      volume={284},
      number={3},
       pages={409\ndash 421},
         url={https://doi-org.ezproxy.library.wisc.edu/10.1007/BF01442493},
      review={\MR{1001710}},
}

\bib{MR2088929}{article}{
      author={Polyakov, P.L.},
       title={Versal embeddings of compact 3-pseudoconcave {CR} submanifolds},
        date={2004},
        ISSN={0025-5874},
     journal={Math. Z.},
      volume={248},
      number={2},
       pages={267\ndash 312},
  url={https://doi-org.ezproxy.library.wisc.edu/10.1007/s00209-004-0598-3},
      review={\MR{2088929}},
}

\bib{MR0290095}{book}{
      author={Stein, E.M.},
       title={Singular integrals and differentiability properties of
  functions},
      series={Princeton Mathematical Series, No. 30},
   publisher={Princeton University Press, Princeton, N.J.},
        date={1970},
      review={\MR{0290095}},
}

\bib{St2020}{article}{
      author={Street, B.},
       title={Sharp regularity for the integrability of elliptic structures},
        date={2020},
     journal={J. Funct. Anal.},
      volume={278},
      number={1},
}

\bib{MR995504}{article}{
      author={Webster, S.},
       title={On the proof of {K}uranishi's embedding theorem},
        date={1989},
        ISSN={0294-1449},
     journal={Ann. Inst. H. Poincar\'{e} Anal. Non Lin\'{e}aire},
      volume={6},
      number={3},
       pages={183\ndash 207},
         url={http://www.numdam.org/item?id=AIHPC_1989__6_3_183_0},
      review={\MR{995504}},
}

\bib{MR1128608}{incollection}{
      author={Webster, S.~M.},
       title={The integrability problem for {CR} vector bundles},
        date={1991},
   booktitle={Several complex variables and complex geometry, {P}art 3 ({S}anta
  {C}ruz, {CA}, 1989)},
      series={Proc. Sympos. Pure Math.},
      volume={52},
   publisher={Amer. Math. Soc., Providence, RI},
       pages={355\ndash 368},
      review={\MR{1128608}},
}

\bib{MR999729}{article}{
      author={Webster, S.M.},
       title={A new proof of the {N}ewlander-{N}irenberg theorem},
        date={1989},
        ISSN={0025-5874},
     journal={Math. Z.},
      volume={201},
      number={3},
       pages={303\ndash 316},
         url={https://doi.org/10.1007/BF01214897},
      review={\MR{999729}},
}

\end{biblist}
\end{bibdiv}
\end{document}